\numberwithin{equation}{section}
\begin{document}

\newtheorem{mthm}{Theorem}
\newtheorem{mcor}{Corollary}
\newtheorem{mpro}{Proposition}
\newtheorem{mfig}{figure}
\newtheorem{mlem}{Lemma}
\newtheorem{mdef}{Definition}
\newtheorem{mrem}{Remark}
\newtheorem{mpic}{Picture}
\newtheorem{rem}{Remark}[section]
\newcommand{\ra}{{\mbox{$\rightarrow$}}}
\newtheorem{Remark}{Remark}[section]
\newtheorem{thm}{Theorem}[section]
\newtheorem{pro}{Proposition}[section]
\newtheorem*{proA}{Proposition A}

\newtheorem{lem}{Lemma}[section]
\newtheorem{defi}{Definition}[section]
\newtheorem{cor}{Corollary}[section]

\title[]{Extremal solution and Liouville theorem for anisotropic elliptic equations}

\author{Yuan Li }
\address{School of Mathematics, Hunan University, Changsha 410082,
                 PRC  }
\email{liy93@hnu.edu.cn}

\thanks{}

\date{}

\maketitle

\begin{abstract}
We study the quasilinear Dirichlet boundary problem
 \begin{equation}\nonumber
\left\{
\begin{aligned}
-Qu&=\lambda e^{u} \indent \mbox{in}\indent\Omega\\
u&=0 \indent \mbox{on}\indent\partial\Omega,\\
\end{aligned}
\right.
\end{equation}
where $\lambda>0$ is a parameter, $\Omega\subset\mathbb{R}^{N}$ with $N\geq2$ be a bounded domain, and the operator $Q$, known as Finsler-Laplacian or anisotropic Laplacian, is defined by
$$Qu:=\sum_{i=1}^{N}\frac{\partial}{\partial x_{i}}(F(\nabla u)F_{\xi_{i}}(\nabla u)). $$
Here,  $F_{\xi_{i}}=\frac{\partial F}{\partial\xi_{i}}$ and $F: \mathbb{R}^{N}\rightarrow[0,+\infty)$ is a convex function of  $ C^{2}(\mathbb{R}^{N}\setminus\{0\})$, that satisfies certain assumptions. We derive the existence of extremal solution and obtain that it's regular, if $N\leq9$.

We also concern the H\'{e}non type anisotropic Liouville equation, namely,
$$-Qu=(F^{0}(x))^{\alpha}e^{u}\indent\mbox{in}\indent\mathbb{R}^{N},$$
where $\alpha>-2$, $N\geq2$ and $F^{0}$ is the support function of $K:=\{x\in\mathbb{R}^{N}:F(x)<1\}$ which is defined by
$$F^{0}(x):=\sup_{\xi\in K}\langle x,\xi\rangle.$$
We obtain the Liouville theorem for stable solutions and the finite Morse index solutions for $2\leq N<10+4\alpha$ and $3\leq N<10+4\alpha^{-}$ respectively, where $\alpha^{-}=\min\{\alpha,0\}$.

\end{abstract}

\noindent
{\it \footnotesize 2010 Mathematics Subject Classification}: {\scriptsize 35B53, 35B65, 35J62.}\\
{\it \footnotesize Key words:  Finsler or anisotropic Laplacian, extremal solution, Liouville theorem, stable solution, finite Morse index solution}. {\scriptsize }

\section{Introduction and main results}
We consider the quasilinear elliptic equation
\begin{align}\label{1.1}
-Qu=f(x,u)\indent \mbox{in}\indent\Omega\subset\mathbb{R}^{N},
\end{align}
where $N\geq2$, and the operator $Q$ is called anisotropic Laplacian or Finsler Laplacian, which is defined by
$$Qu:=\sum_{i=1}^{n}\frac{\partial}{\partial x_{i}}(F(\nabla u)F_{\xi_{i}}(\nabla u)),$$
where $F_{\xi_{i}}=\frac{\partial F}{\partial\xi_{i}}$ and $F:\mathbb{R}^{n}\rightarrow[0, \infty)$ is a convex and homogeneous function of $C^{2}(\mathbb{R}^{n}\setminus\{0\})$. $F^{0}$ is the support function of $K:=\{x\in\mathbb{R}^{N}:F(x)<1\}$ which is defined by
$$F^{0}(x):=\sup_{\xi\in K}\langle x,\xi\rangle.$$

 Especially, when $F(\xi)=|\xi|$, that is the isotropic case, the operator $Q$ becomes the classical Laplacian operator. There is a long history to research the anisotropic operator. In the early twentieth century, Wulff \cite{W} used such operators to study crystal shapes and minimization of anisotropic surface tensions. For more applications of anisotropic operator in the case of crystallization refer to \cite{AT,ATW}. Furthermore, for more literature on anisotropic operator, we refer interested readers to \cite{AFTL,CS09,CFV,cfv,FV14,FL,FM,WX,WX12} and  references therein.

For subsequent calculation, we  give some certain assumptions to the function $F$. Let $F: \mathbb{R}^{N}\rightarrow[0,+\infty)$ be a convex function in  $ C^{2}(\mathbb{R}^{N}\setminus\{0\})$ where $F(t\xi)=|t|F(\xi)$ for any $t\in\mathbb{R}$ and $\xi\in\mathbb{R}^{N}$. We assume that $F(\xi)>0$ for any $\xi\neq0$ and there exist constant $0<a\leq b<\infty$ and $0<\lambda\leq\Lambda<\infty$ such that
\begin{equation*}
a|\xi|\leq F(\xi)\leq b|\xi|  \indent\mbox{for any}\indent \xi\in\mathbb{R}^{n},
\end{equation*}
and
\begin{align}\label{1.2}
\lambda^{2}|V|^{2}\leq F_{\xi_{i}\xi_{j}}(\xi)V_{i}V_{j}\leq\Lambda|V|^{2},
\end{align}
for any $\xi\in\mathbb{R}^{n}$ and $V\in\xi^{\bot}$ where $\xi^{\bot}:=\{V\in\mathbb{R}^{n}: \langle V, \xi\rangle=0\}$.  Let $B_{r}(x_{0}):=\{x\in\mathbb{R}^{n}: F^{0}(x-x_{0})<r\}$ denote a Wulff ball of radius $r$ with center at $x_{0}$, and we  use this notation $B_{r}(x_{0})$ throughout the paper. Set $\kappa_{0}:=|B_{1}(x_{0})|$, where $|B_{1}(x_{0})|$ is the Lebesgue measure of $B_{1}(x_{0})$. It follows from the assumptions of $F$, the following properties holds, we refer interested readers to \cite{AFTL,FL, FK, WX, WX12}.
\begin{pro}\label{pro1.1}
We have the following properties:

(1) $|F(x)-F(y)|\leq F(x+y)\leq F(x)+F(y)$;

(2) $|\nabla F(x)|\leq C$ for any $x\neq0$;

(3) $\langle \xi, \nabla F(\xi)\rangle=F(\xi)$, $\langle x, \nabla F^{0}(x)\rangle=F^{0}(x)$ for any $x\neq0$, $\xi\neq0$;

(4) $\sum_{j=1}^{N}F_{\xi_{i}\xi_{j}}(\xi)\xi_{j}=0$, for any $i=1,2,\cdots,N$;

(5) $F(\nabla F^{0}(x))=1$, $F^{0}(\nabla F(x))=1$;

(6) $F_{\xi_{i}}(t\xi)=sgn(t)F_{\xi_{i}}(\xi)$;

(7) $F^{0}(x)F_{\xi}(\nabla F^{0}(x))=x$.
\end{pro}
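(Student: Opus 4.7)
The plan is to verify each of the seven properties using the three core features of $F$: positive $1$-homogeneity $F(t\xi)=|t|F(\xi)$, convexity, and the definition $F^{0}(x)=\sup_{\xi\in K}\langle x,\xi\rangle$ where $K=\{F<1\}$. All seven items are classical in the theory of anisotropic norms and convex duality; no new idea is required, but they have to be organized so that each is a short argument.

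First I would dispose of the ``Euler-identity'' items (3), (4), (6). Property (6) comes from differentiating the homogeneity relation $F(t\xi)=|t|F(\xi)$ in $\xi_{i}$, which yields $tF_{\xi_{i}}(t\xi)=|t|F_{\xi_{i}}(\xi)$, i.e., $F_{\xi_{i}}(t\xi)=\mathrm{sgn}(t)F_{\xi_{i}}(\xi)$. Property (3) for $F$ is the usual Euler identity, obtained by differentiating $F(t\xi)=tF(\xi)$ in $t$ at $t=1$; the same proof applies to $F^{0}$, because any support function is automatically positively $1$-homogeneous and convex. Property (4) then follows by differentiating the $0$-homogeneous map $t\mapsto F_{\xi_{i}}(t\xi)$ (which is the $t>0$ case of (6)) in $t$ at $t=1$.

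Next I would handle (1) and (2). Subadditivity $F(x+y)\leq F(x)+F(y)$ comes from $F(x+y)=2F(\tfrac{x+y}{2})\leq F(x)+F(y)$ via convexity together with $1$-homogeneity; the reverse inequality follows by writing $x=(x+y)+(-y)$ and using $F(-y)=F(y)$. For (2), since $\nabla F$ is $0$-homogeneous away from the origin by (6), $|\nabla F|$ is determined by its values on the Euclidean unit sphere, on which it is bounded by continuity of $\nabla F$ on $\mathbb{R}^{N}\setminus\{0\}$.

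The main obstacle is (5) and (7), where the polar/gradient duality between $F$ and $F^{0}$ must be used carefully. For (5), to show $F^{0}(\nabla F(\xi))=1$, I would combine convexity $F(\eta)\geq F(\xi)+\langle\nabla F(\xi),\eta-\xi\rangle$ with Euler (3) to get $\langle\nabla F(\xi),\eta\rangle\leq F(\eta)$; taking the supremum over $\{F(\eta)\leq 1\}$ gives $F^{0}(\nabla F(\xi))\leq 1$, while the test vector $\eta=\xi/F(\xi)$ realizes equality. The companion identity $F(\nabla F^{0}(x))=1$ follows by the same argument transposed, once one records the bipolar fact that $F$ is the support function of the unit $F^{0}$-ball (guaranteed by the nondegeneracy $F(\xi)>0$ for $\xi\neq 0$, symmetry $F(-\xi)=F(\xi)$, and strict convexity encoded in \eqref{1.2}). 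Finally, (7) is obtained by applying Lagrange multipliers in the variational problem defining $F^{0}$: the maximizer $\xi^{\star}$ of $\langle x,\cdot\rangle$ on $\{F=1\}$ satisfies $x=\mu\nabla F(\xi^{\star})$; pairing with $\xi^{\star}$ and invoking Euler gives $\mu=F^{0}(x)$, while identifying $\xi^{\star}=\nabla F^{0}(x)$ through (3) and (5) (uniqueness of the maximizer follows from strict convexity of the unit $F$-ball) yields $x=F^{0}(x)F_{\xi}(\nabla F^{0}(x))$, which is (7).
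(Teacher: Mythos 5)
Your proof is correct. Note that the paper itself does not prove Proposition \ref{pro1.1} at all: it simply defers to the references \cite{AFTL,FL,FK,WX,WX12}, so there is no in-paper argument to compare yours against; the derivations you give (Euler's identity and its differentiated consequences for (3), (4), (6); convexity plus homogeneity for (1); zero-homogeneity of $\nabla F$ for (2); the convexity inequality $\langle\nabla F(\xi),\eta\rangle\leq F(\eta)$ together with the test vector $\eta=\xi/F(\xi)$ for (5); and the Lagrange-multiplier/Danskin identification of $\nabla F^{0}(x)$ with the unique maximizer for (7)) are exactly the standard ones found in those references. The only points worth making explicit if you write this out in full are (i) that the supremum defining $F^{0}$ is taken over the open set $K$, so realizing equality in (5) with a boundary point $\eta=\xi/F(\xi)$ requires passing to $\overline{K}$ by continuity; (ii) that the bipolar identity $F=(F^{0})^{0}$ needs only that $F$ is the gauge of a convex body symmetric about the origin (strict convexity from \eqref{1.2} is what you actually need for the differentiability of $F^{0}$ away from $0$ and for the uniqueness of the maximizer in (7), and you correctly invoke it there).
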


In order to state  our main results, we give the definition of the weak stable solutions of (\ref{1.1}).
\begin{defi}
We say that $u$ is a weak solution of (\ref{1.1}), if $u\in H^{1}_{loc}(\Omega)$, $f(x,u)\in L^{1}_{loc}(\Omega)$, and the following holds
$$\int_{\Omega}F(\nabla u)F_{\xi}(\nabla u)\cdot\nabla\phi dx=\int_{\Omega}f(x,u)\phi dx, $$
for all $\phi\in C_{c}^{\infty}(\Omega)$. To go further, we say that the weak solution is stable, if for all $\phi\in C_{c}^{\infty}(\Omega)$ holds,
\begin{align}
\int_{\Omega}F_{\xi_{i}}(\nabla u)F_{\xi_{j}}(\nabla u)\phi_{x_{i}}\phi_{x_{j}}+F(\nabla u)F_{\xi_{i}\xi_{j}}(\nabla u)\phi_{x_{i}}\phi_{x_{j}}-\partial_{u}f(x,u)\phi^{2}dx\geq0.\nonumber
\end{align}
\end{defi}

If $f(x,u)=\lambda e^{u}$ for some positive parameter $\lambda$, we consider the following Dirichlet  boundary problem
 \begin{equation}
\left\{
\begin{aligned}\label{1.3}
-Qu&=\lambda e^{u} \indent \mbox{in}\indent\Omega\\
u&=0 \indent \mbox{on}\indent\partial\Omega,\\
\end{aligned}
\right.
\end{equation}
where $\Omega$ is a bounded domain. For this equation, we say that the solution $\underline{u}$ is minimal, if for any supersolution $u$ satisfies the following equation
 \begin{equation}
\left\{
\begin{aligned}\nonumber
-Qu&\geq\lambda e^{u} \indent \mbox{in}\indent\Omega\\
u&\geq0 \indent \mbox{on}\indent\partial\Omega,\\
\end{aligned}
\right.
\end{equation}
it holds $u\geq\underline{u}$. We also say that the solution $u\in W_{0}^{1,2}(\Omega)$ is regular if and only if $e^{u}\in L^{\infty}(\Omega)$, and is singular iff $e^{u}\in L^{1}(\Omega)$.

For the isotropic case, i.e. $F(\xi)=|\xi|$, the equation becomes
 \begin{equation}
\left\{
\begin{aligned}\label{2 lambda}
-\Delta u&=\lambda e^{u} \indent \mbox{in}\indent\Omega\\
u&=0 \indent \mbox{on}\indent\partial\Omega,\\
\end{aligned}
\right.
\end{equation}
it's well known that there exist a maximal parameter $\lambda^{*}>0$ such that for any $0<\lambda<\lambda^{*}$, the equation (\ref{2 lambda}) admits a minimal solution $u_{\lambda}$, and if $\lambda\rightarrow\lambda^{*}$, $\lambda<\lambda^{*}$ the solution $u_{\lambda}$ converges to the so-called extremal solution, which turns out to be smooth for $N\leq9$, we refer interested reader to \cite{CR,MP}. Moreover, Garcia Azorero et al. in \cite{GP,GPP} extend these results to the p-Laplacian i.e. the equation
 \begin{equation}\nonumber
\left\{
\begin{aligned}
-\Delta_{p} u&\equiv-div(|\nabla u|^{p-2}\nabla u)=\lambda e^{u} \indent \mbox{in}\indent\Omega\\
u&=0 \indent \mbox{on}\indent\partial\Omega.\\
\end{aligned}
\right.
\end{equation}
Inspired by these results, we consider the existence of extremal solution to the equation (\ref{1.3}), the following are our main results.

\begin{thm}\label{thm1.1}
There exist a constant $\lambda_{0}$ such that if $0<\lambda<\lambda_{0}$, then (\ref{1.3}) has a minimal solution which is regular and stable.
\end{thm}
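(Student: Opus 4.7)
The plan is to construct the minimal solution $u_\lambda$ by the classical sub/supersolution method, exploiting the positive $1$-homogeneity of $Q$ (which follows from $F$ being $1$-homogeneous and $F_{\xi_i}$ being $0$-homogeneous under positive dilations, cf.\ Proposition \ref{pro1.1}). First I would solve the auxiliary Dirichlet problem $-Qw = 1$ in $\Omega$, $w = 0$ on $\partial\Omega$, obtaining a nonnegative $w \in L^{\infty}(\Omega)$ by the standard regularity theory for the anisotropic Laplacian (see \cite{AFTL,WX}). Setting $M := \|w\|_{L^\infty(\Omega)}$, the homogeneity of $Q$ gives $-Q(tw) = t$ for every $t > 0$, so $\bar u := tw$ is a weak supersolution of \eqref{1.3} provided $t \geq \lambda e^{tM}$. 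Since the map $t \mapsto t e^{-tM}$ attains its maximum $\tfrac{1}{eM}$ at $t = 1/M$, the choice $\lambda_0 := \tfrac{1}{eM}$ guarantees the existence of an admissible $t_\lambda > 0$ for every $0 < \lambda < \lambda_0$.

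Starting from the subsolution $u_0 \equiv 0$ (note that $-Q(0) = 0 \leq \lambda e^{0}$), I would then run the monotone iteration defined by $-Q u_{k+1} = \lambda e^{u_k}$ in $\Omega$ with $u_{k+1} = 0$ on $\partial\Omega$. Each step is uniquely solvable by minimising the strictly convex functional $v \mapsto \int_\Omega \bigl(\tfrac12 F^{2}(\nabla v) - \lambda e^{u_k} v\bigr)\, dx$ over $W_0^{1,2}(\Omega)$, and the comparison principle for $Q$ (available under the hypothesis \eqref{1.2}) yields inductively the chain $0 \leq u_k \leq u_{k+1} \leq \bar u$. The monotone limit $u_\lambda$ is therefore a weak solution of \eqref{1.3} with $0 \leq u_\lambda \leq t_\lambda M$, and the same inductive comparison shows that any supersolution $v$ of \eqref{1.3} satisfies $v \geq u_k$ for all $k$, hence $v \geq u_\lambda$; thus $u_\lambda$ is the minimal solution. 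Because $\lambda e^{u_\lambda} \in L^\infty(\Omega)$, bootstrapping through the standard regularity theory for the anisotropic Laplacian then yields $u_\lambda \in C^{1,\alpha}(\overline\Omega)$, which is the asserted regularity.

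For stability I would exploit the identity
\begin{equation}
 F_{\xi_i}(\xi) F_{\xi_j}(\xi) + F(\xi) F_{\xi_i \xi_j}(\xi) = \tfrac12 (F^2)_{\xi_i \xi_j}(\xi) =: a_{ij}(\xi),
\end{equation}
which recognises the coefficients of the linearised operator as the Hessian of the $2$-homogeneous function $F^2/2$. Assumption \eqref{1.2}, combined with the $0$-homogeneity of $a_{ij}$ inherited from the $2$-homogeneity of $F^2$, yields a uniform ellipticity bound $a_{ij}(\xi) V_i V_j \geq c |V|^2$ valid for every $\xi \neq 0$ and $V \in \mathbb{R}^N$, with $c > 0$ depending only on $a, b$ and the ellipticity constants. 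Together with Poincar\'e's inequality this gives
\begin{equation}
 \int_\Omega \bigl[a_{ij}(\nabla u_\lambda) \phi_{x_i} \phi_{x_j} - \lambda e^{u_\lambda} \phi^2\bigr]\, dx \;\geq\; \bigl(c - \lambda\, C_\Omega\, e^{\|u_\lambda\|_\infty}\bigr) \int_\Omega |\nabla \phi|^2\, dx,
\end{equation}
which is nonnegative for all $\phi \in C_c^\infty(\Omega)$ as soon as $\lambda$ is small enough, after possibly shrinking $\lambda_0$. The principal obstacle, in my view, is the iteration step: since $Q$ is genuinely quasilinear, neither the solvability of $-Qv = g$ nor the comparison principle is automatic, and both must be drawn from the anisotropic PDE literature. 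A secondary subtlety is the uniform ellipticity of $a_{ij}$, which has to be verified from \eqref{1.2} together with the $2$-homogeneity of $F^2$ rather than assumed directly.
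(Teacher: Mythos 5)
Your construction of the minimal solution is essentially the paper's: solve $-Qw=1$, use it (rescaled via the positive $1$-homogeneity of $\xi\mapsto F(\xi)F_\xi(\xi)$) as a bounded supersolution for $\lambda$ small, and run the monotone iteration $-Qu_{k+1}=\lambda e^{u_k}$ between $0$ and $\bar u$; the paper solves the torsion problem on a larger ball $B\supset\overline\Omega$ and takes $\lambda_0=e^{-M}$ rather than optimizing $t\mapsto te^{-tM}$, but these are cosmetic differences, and solvability of each step plus the comparison principle are exactly what Proposition \ref{+pro2.1} and the convex-minimization argument supply. Where you genuinely diverge is the stability proof. You establish stability by a direct coercivity estimate: uniform ellipticity of $a_{ij}=\tfrac12(F^2)_{\xi_i\xi_j}$ (which does follow from \eqref{1.2} together with $\langle\nabla F(\xi),\xi\rangle=F(\xi)>0$, homogeneity and compactness) plus Poincar\'e, absorbing $\lambda e^{u_\lambda}\phi^2$ for $\lambda$ small. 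The paper instead minimizes the energy $E$ over the convex set $K=\{0\le v\le\underline u\}$, identifies the minimizer with $\underline u$ by a comparison argument on $(u-u_n)^-$, and reads off $\langle E''(\underline u)\phi,\phi\rangle\ge0$ from the one-sided minimality together with the strong maximum principle. Your route is more elementary, but it buys strictly less: it proves stability only after possibly shrinking $\lambda_0$, whereas the paper's variational argument shows the minimal solution is stable for \emph{every} $\lambda$ at which it exists --- and this stronger fact is what the remark after Theorem \ref{thm1.3} uses to conclude that the extremal solution $u^*$ is stable, and what Theorem \ref{thm1.4} relies on. A second, smaller caveat: your pointwise bound $a_{ij}(\nabla u_\lambda)V_iV_j\ge c|V|^2$ is meaningful only where $\nabla u_\lambda\neq0$, so you should either note that the quadratic form is interpreted as its lower-semicontinuous extension on the critical set or argue that $\{\nabla u_\lambda=0\}$ is negligible; the paper's energy-comparison argument sidesteps this because it never needs the pointwise ellipticity of the linearized coefficients.
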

For the isotropic case, it's well known that equation (\ref{2 lambda}) has no solution if $\lambda$ is bigger than the first eigenvalue for the Laplacian. Similarly, we have the following nonexistence result.
\begin{thm}\label{thm1.2}
There is no solution for equation (\ref{1.3}) if $\lambda>\lambda_{1}$, where $\lambda_{1}$ is the first eigenvalue for the Finsler Laplacian.
\end{thm}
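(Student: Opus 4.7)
My plan is to adapt the classical first-eigenfunction argument for the Gelfand problem, replacing the (unavailable) self-adjointness of $-Q$ with an anisotropic Picone-type inequality. Let $\phi_1\in W_0^{1,2}(\Omega)$ denote the positive first eigenfunction of $-Q$ on $\Omega$ corresponding to
$$\lambda_1 \;=\; \inf_{v\in W_0^{1,2}(\Omega)\setminus\{0\}} \frac{\int_\Omega F(\nabla v)^2\,dx}{\int_\Omega v^2\,dx}.$$
Assume for contradiction that $u$ is a weak solution of (\ref{1.3}) with $\lambda>\lambda_1$. Since $-Qu=\lambda e^u>0$ in $\Omega$ and $u=0$ on $\partial\Omega$, the maximum principle and Hopf boundary lemma for the uniformly elliptic operator $Q$ (thanks to the ellipticity (\ref{1.2})) yield $u>0$ in $\Omega$ with $u\geq c_0\,\mathrm{dist}(\cdot,\partial\Omega)$ near the boundary; the same kind of lower bound holds for $\phi_1$.

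The key lemma I would isolate is the following Picone-type inequality: if $w\geq 0$ and $v>0$ are smooth, then pointwise
$$F(\nabla w)^2 \;\geq\; \nabla\!\left(\frac{w^2}{v}\right)\cdot F(\nabla v)\,F_\xi(\nabla v).$$
This is elementary: expanding the right-hand side and using $\nabla v\cdot F_\xi(\nabla v)=F(\nabla v)$ from Property~\ref{pro1.1}(3), the claim reduces to $2(w/v)F(\nabla v)\bigl(\nabla w\cdot F_\xi(\nabla v)\bigr)\leq F(\nabla w)^2+(w/v)^2F(\nabla v)^2$, which follows from the dual-norm bound $\nabla w\cdot F_\xi(\nabla v)\leq F^0(F_\xi(\nabla v))F(\nabla w)=F(\nabla w)$ (via Property~\ref{pro1.1}(5)) combined with Young's inequality.

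Now apply the Picone inequality with $w=\phi_1$, $v=u$, integrate over $\Omega$, and evaluate the two ends by testing the eigenfunction equation against $\phi_1$ and equation (\ref{1.3}) against $\phi_1^2/u$; this gives
$$\lambda_1\int_\Omega \phi_1^2\,dx \;\geq\; \lambda\int_\Omega \frac{e^u}{u}\,\phi_1^2\,dx.$$
Since $e^s>s$ for every $s\in\mathbb{R}$, we have $e^u/u>1$ pointwise in $\Omega$; combined with $\phi_1>0$, this forces $\int_\Omega (e^u/u)\phi_1^2\,dx > \int_\Omega \phi_1^2\,dx$ strictly, and therefore $\lambda_1>\lambda$—contradicting the hypothesis $\lambda>\lambda_1$.

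The principal technical obstacle will be justifying $\phi_1^2/u$ as an admissible test function in (\ref{1.3}). This requires both the boundary lower bound on $u$ (so that $\phi_1^2/u\in L^\infty(\Omega)$) and enough regularity of $u$ to ensure $\nabla(\phi_1^2/u)\in L^2(\Omega)$. If Hopf's lemma is unavailable at the level of weak solutions considered, the standard remedy is to work with the truncated test function $\phi_1^2/(u+\varepsilon)$, derive the analogous inequality with $e^u/(u+\varepsilon)$ on the right, and pass to the limit $\varepsilon\to 0^+$ via monotone/dominated convergence. Existence, positivity, and simplicity of $\phi_1$ for the Finsler Laplacian are standard inputs that I would invoke without reproof.
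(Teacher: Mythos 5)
Your argument is correct, but it follows a genuinely different route from the paper. The paper argues by monotone iteration: normalizing the first eigenfunction $v_1$ so that $\|v_1\|_{L^\infty}\le 1$, it uses $e^u\ge 1$ and the weak comparison principle to get $v_1\le u$, then solves $-Qv_{n+1}=\lambda_\delta v_n$ (with $\lambda_1<\lambda_\delta\le\lambda$) to produce an increasing sequence dominated by $u$, whose limit is a positive eigenfunction for $\lambda_\delta>\lambda_1$ --- impossible since only the first eigenvalue admits a positive (simple) eigenfunction. Your route instead rests on the anisotropic Picone inequality, whose verification you give correctly: the reduction uses Euler's identity $\langle\nabla v,F_\xi(\nabla v)\rangle=F(\nabla v)$ (Proposition~\ref{pro1.1}(3)), the generalized Cauchy--Schwarz inequality $\langle x,\xi\rangle\le F^0(x)F(\xi)$ together with $F^0(F_\xi(\nabla v))=1$ (Proposition~\ref{pro1.1}(5)), and Young's inequality. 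Testing the eigenvalue equation with $\phi_1$ and equation (\ref{1.3}) with $\phi_1^2/(u+\varepsilon)$ (which is admissible, being bounded with $L^2$ gradient, modulo the routine density extension of the class of test functions beyond $C_c^\infty$ that the paper itself uses elsewhere), and letting $\varepsilon\to0^+$ by monotone convergence, yields $\lambda_1\int\phi_1^2\ge\lambda\int (e^u/u)\phi_1^2>\lambda\int\phi_1^2$, the strict inequality coming from $e^s>s$ and $u>0$ (strong maximum principle). Each approach has its advantages: the paper's iteration is softer and leans only on comparison principles plus simplicity of $\lambda_1$, whereas your Picone argument is quantitative --- it in fact rules out solutions for all $\lambda\ge\lambda_1$, and since $e^s\ge es$ for $s>0$ it even gives nonexistence for $\lambda>\lambda_1/e$, which is sharper than the stated theorem.
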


From the above two theorems, we can define
$$\lambda^{*}:=\sup\{\lambda| (\ref{1.3})\indent\mbox{ has solution}\}.$$
This leads us naturally to ask what will happen at $\lambda^{*}$?

\begin{thm}\label{thm1.3}
Let $\{\lambda_{n}\}$ be an increasing sequence such that $\lambda_{n}\rightarrow\lambda^{*}$, and $\underline{u}_{n}=\underline{u}_{n}(\lambda_{n})$ be the corresponding minimal solution, then, we have
$$\underline{u}_{n}\rightarrow u^{*}\indent\mbox{strongly in } W_{0}^{1,2}(\Omega),$$
$$e^{\underline{u}_{n}}\rightarrow e^{u^{*}}\indent\mbox{strongly in } L^{\frac{2^{*}}{2^{*}-1}}(\Omega)$$
where $2^{*}=\frac{2N}{N-2}$, $u^{*}$ is a singular solution of (\ref{1.3}) with the parameter $\lambda^{*}$, we call it as extremal solution.
\end{thm}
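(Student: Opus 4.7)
The plan is to adapt the classical Crandall--Rabinowitz scheme to the Finsler Laplacian, exploiting the homogeneity identities in Proposition \ref{pro1.1} and the uniform ellipticity (\ref{1.2}). I first note that the minimal solutions are ordered: since $\lambda_n<\lambda_{n+1}$, $\underline{u}_n$ is a subsolution at parameter $\lambda_{n+1}$, and by the comparison argument that underlies Theorem \ref{thm1.1} one has $\underline{u}_n\leq\underline{u}_{n+1}$. Hence $\underline{u}_n\nearrow u^*$ pointwise a.e.\ for some measurable $u^*$, and the job is to promote this to strong $W^{1,2}_0$-convergence and to verify that $u^*$ solves (\ref{1.3}) at $\lambda=\lambda^*$.

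For the a priori bounds I would exploit the stability of $\underline{u}_n$ (valid for each $\lambda_n<\lambda^*$ by iterating the argument of Theorem \ref{thm1.1}). Testing the equation with $e^{\underline{u}_n}-1$ and using Euler's identity $F_{\xi_i}(\xi)\xi_i=F(\xi)$ gives
\begin{equation*}
\int_{\Omega}e^{\underline{u}_n}F(\nabla\underline{u}_n)^{2}\,dx=\lambda_n\int_{\Omega}(e^{2\underline{u}_n}-e^{\underline{u}_n})\,dx,
\end{equation*}
while the stability inequality with $\phi=e^{\underline{u}_n/2}-1$ reduces, upon using property (4) of Proposition \ref{pro1.1} (which kills the $F\,F_{\xi_i\xi_j}$ term since $F_{\xi_i\xi_j}(\xi)\xi_i\xi_j=0$) and Euler's identity on the quadratic $F_{\xi_i}F_{\xi_j}$ term, to
\begin{equation*}
\tfrac{1}{4}\int_{\Omega}e^{\underline{u}_n}F(\nabla\underline{u}_n)^{2}\,dx\geq\lambda_n\int_{\Omega}e^{\underline{u}_n}(e^{\underline{u}_n/2}-1)^{2}\,dx.
\end{equation*}
Combining the two and applying H\"older's inequality produces a uniform bound on $\int_{\Omega}e^{2\underline{u}_n}\,dx$. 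Testing the equation once more with $\underline{u}_n$ and using $F(\xi)\geq a|\xi|$ then gives a uniform $W^{1,2}_0$-bound, so up to a subsequence $\underline{u}_n\rightharpoonup u^*$ weakly, and the weak limit agrees with the pointwise one.

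The passage to the limit in the nonlinear principal part is handled by Minty's monotone-operator trick applied to $H(\xi):=\tfrac{1}{2}F(\xi)^{2}$, whose gradient $\nabla H=F\nabla F$ is exactly the vector field appearing in the weak formulation. Condition (\ref{1.2}) together with Proposition \ref{pro1.1} makes $H$ uniformly convex, hence $\nabla H$ strongly monotone. For any $v\in W^{1,2}_0(\Omega)$,
\begin{equation*}
0\leq\int_{\Omega}\bigl(\nabla H(\nabla\underline{u}_n)-\nabla H(\nabla v)\bigr)\cdot\nabla(\underline{u}_n-v)\,dx;
\end{equation*}
rewriting the $\underline{u}_n$-dependent terms via the weak formulation $\int\nabla H(\nabla\underline{u}_n)\cdot\nabla\psi=\lambda_n\int e^{\underline{u}_n}\psi$, both resulting right-hand sides converge by monotone/dominated convergence (with $e^{u^*}\in L^{2}(\Omega)$ from Fatou). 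Letting $n\to\infty$, substituting $v=u^*+tw$ and sending $t\to 0^{+}$, then replacing $w$ by $-w$, identifies $u^*$ as a weak solution at $\lambda^*$ with $e^{u^*}\in L^{1}(\Omega)$, so $u^*$ is singular in the sense of the definition. Strong convergence in $W^{1,2}_0$ then follows because $\int F(\nabla\underline{u}_n)^{2}=\lambda_n\int e^{\underline{u}_n}\underline{u}_n\to\lambda^*\int e^{u^*}u^*=\int F(\nabla u^*)^{2}$, and the uniform convexity of $H$ upgrades this norm convergence together with weak convergence to strong convergence. Finally, $e^{\underline{u}_n}\nearrow e^{u^*}$ pointwise with dominating function $e^{u^*}\in L^{2}(\Omega)\subset L^{2^{*}/(2^{*}-1)}(\Omega)$, so dominated convergence yields $e^{\underline{u}_n}\to e^{u^*}$ in $L^{2^{*}/(2^{*}-1)}(\Omega)$.

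The hard part is passing to the limit in the nonlinear term $F(\nabla\underline{u}_n)F_{\xi}(\nabla\underline{u}_n)$: weak $W^{1,2}_0$-convergence is insufficient for a quasilinear operator, and no compact embedding helps directly. The linchpin is the uniform convexity of $H=F^{2}/2$ supplied by (\ref{1.2}) and property (4); it is precisely what simplifies the stability inequality in the a priori step and what furnishes the strong monotonicity of $\nabla H$ required by Minty's argument.
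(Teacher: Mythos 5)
Your proposal is correct and rests on the same two pillars as the paper's proof: (i) exploit the stability inequality with exponential test functions, using Euler's identity and property (4) of Proposition \ref{pro1.1} to kill the $FF_{\xi_i\xi_j}$ term, in order to get a uniform $L^{p}$ bound on $e^{\underline{u}_n}$ with $p\geq\frac{2^*}{2^*-1}$; (ii) use the strong monotonicity of $\xi\mapsto F(\xi)F_\xi(\xi)$ coming from (\ref{1.2}) to pass to the limit in the principal part. The differences are in execution. For the a priori bounds, the paper first tests stability with $\phi=\underline{u}_n$ and bounds $\int e^{\underline{u}_n}\underline{u}_n$ by an elementary splitting over $\{\underline{u}_n>2\}$ to get the $W^{1,2}_0$ bound, and only afterwards runs the exponential test functions $\psi=e^{\alpha\underline{u}_n}-1$, $\phi=\frac{1}{2\alpha}(e^{2\alpha\underline{u}_n}-1)$ for general $\alpha\in(0,2)$, yielding $e^{\underline{u}_n}$ bounded in $L^{2\alpha+1}$ for any $2\alpha+1<5$; you specialize to $\alpha=\frac12$, get the $L^2$ bound first, and deduce the gradient bound from it — this is a legitimate shortcut since $\frac{2^*}{2^*-1}=\frac{2N}{N+2}<2$ on a bounded domain. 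For the limit passage, the paper goes through $e^{\underline{u}_n}\to e^{u^*}$ in $L^{\frac{2^*}{2^*-1}}\hookrightarrow W^{-1,2}$ and invokes continuity of $(-Q)^{-1}$ to get strong $W^{1,2}_0$ convergence before identifying the equation; you instead identify the limit equation by Minty's trick and then upgrade to strong convergence via convergence of the energies plus uniform convexity of $H=\tfrac12 F^2$. Both routes ultimately use the same monotonicity, and yours has the merit of making explicit what "continuity of $(-Q)^{-1}$" conceals. Two small points worth tightening: the monotonicity $\underline{u}_n\leq\underline{u}_{n+1}$ is cleanest via the observation that $\underline{u}_{n+1}$ is a supersolution at parameter $\lambda_n$ (so minimality of $\underline{u}_n$ applies), rather than viewing $\underline{u}_n$ as a subsolution at $\lambda_{n+1}$; and the identity $\lambda^*\int e^{u^*}u^*=\int F^2(\nabla u^*)$ requires admitting $u^*$ itself as a test function in the limit equation, which needs a routine truncation argument since a priori only $e^{u^*}u^*\in L^1$.
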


\begin{rem}
Since the minimal solution $\underline{u}_{n}$ is stable, so it's easy to see that the extremal solution $u^{*}$ is also stable.
\end{rem}

It can be seen from the Theorem \ref{thm1.1} that the minimal solution is regular, so, what about the smoothness of the extremal solution? In isotropic case, the extremal solution is regular provided that $N\leq9$ and for the p-Laplacian case when $N<\frac{4p}{p-1}+p$. For our case, we have the following conclusion.

\begin{thm}\label{thm1.4}
If $N\leq9$, then the extremal solution of (\ref{1.3}) is regular.
\end{thm}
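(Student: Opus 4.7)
The plan is to mimic the classical Crandall--Rabinowitz argument for the Gelfand problem, exploiting the special algebraic identities in Proposition \ref{pro1.1}. All work is done first on the minimal (smooth, stable) solution $u_\lambda$ for $\lambda<\lambda^*$ and then transferred to $u^*$ via the convergence established in Theorem \ref{thm1.3}. The key is to use the test function $\phi=e^{\alpha u_\lambda}-1$ in the stability inequality and, in parallel, to test the equation itself against $\psi=(e^{2\alpha u_\lambda}-1)/(2\alpha)$, for $\alpha\in(0,2)$.

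First I would exploit two cancellations. Since $u_\lambda\in L^\infty$, the function $\phi=e^{\alpha u_\lambda}-1$ lies in $W^{1,2}_0(\Omega)\cap L^\infty(\Omega)$ and is an admissible test function by density. Property (3) gives $F_{\xi_i}(\nabla u)u_{x_i}=F(\nabla u)$, so the first quadratic term in the stability inequality becomes $\alpha^2 e^{2\alpha u}F(\nabla u)^2$. Property (4) gives $\sum_{i,j}F_{\xi_i\xi_j}(\nabla u)u_{x_i}u_{x_j}=0$, so the second quadratic term vanishes. Therefore stability collapses to
$$\alpha^2\int_\Omega e^{2\alpha u_\lambda}F(\nabla u_\lambda)^2\,dx\ \geq\ \int_\Omega \lambda e^{u_\lambda}(e^{\alpha u_\lambda}-1)^2\,dx.$$
Testing the equation with $\psi$ and using Property (3) again yields the companion identity
$$\int_\Omega e^{2\alpha u_\lambda}F(\nabla u_\lambda)^2\,dx=\frac{1}{2\alpha}\int_\Omega \lambda e^{u_\lambda}(e^{2\alpha u_\lambda}-1)\,dx.$$

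Next I would eliminate the energy and derive the exponential bound. Substituting the identity into the stability inequality, expanding $(e^{\alpha u}-1)^2$ and rearranging produces
$$\Bigl(1-\tfrac{\alpha}{2}\Bigr)\int_\Omega \lambda e^{(2\alpha+1)u_\lambda}\,dx\ \leq\ 2\int_\Omega \lambda e^{(\alpha+1)u_\lambda}\,dx-\Bigl(\tfrac{\alpha}{2}+1\Bigr)\int_\Omega \lambda e^{u_\lambda}\,dx.$$
For any $\alpha\in(0,2)$ the prefactor on the left is positive. Applying Young's inequality in the form $e^{(\alpha+1)u}\leq \varepsilon\,e^{(2\alpha+1)u}+C_\varepsilon e^{u}$ with $\varepsilon$ small and absorbing, together with the uniform $L^1$-bound on $\lambda e^{u_\lambda}$ provided by the $L^{2^*/(2^*-1)}$-convergence of Theorem \ref{thm1.3}, gives a constant $C_\alpha$ independent of $\lambda$ with
$$\int_\Omega e^{(2\alpha+1)u_\lambda}\,dx\leq C_\alpha.$$
Letting $\lambda\uparrow\lambda^*$ and using Fatou's lemma, $e^{u^*}\in L^p(\Omega)$ for every $p<5$.

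Finally, the bootstrap. When $N\leq 9$ one has $N/2\leq 9/2<5$, so we may fix some $p$ with $N/2<p<5$. By the ellipticity assumption (\ref{1.2}), the operator $Q$ is uniformly elliptic and its energy is comparable to the Dirichlet energy; therefore the standard De Giorgi--Nash--Moser $L^p\!\to L^\infty$ regularity for quasilinear equations (as developed in the references cited in the introduction) applies to $-Qu^*=\lambda^* e^{u^*}$ with right-hand side in $L^p(\Omega)$, yielding $u^*\in L^\infty(\Omega)$, i.e.\ the extremal solution is regular. The only delicate step in the entire scheme is this last one: one must verify (or carefully cite) that the Moser iteration in the anisotropic setting closes at the threshold $p>N/2$; this is routine under (\ref{1.2}) but is the technical heart of the proof, the rest being algebra driven entirely by Properties (3) and (4) of Proposition \ref{pro1.1}.
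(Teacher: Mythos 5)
Your proposal is correct and follows essentially the same route as the paper: test the stability inequality with $e^{\alpha u_\lambda}-1$ and the equation with $\frac{1}{2\alpha}(e^{2\alpha u_\lambda}-1)$, use Properties (3) and (4) of Proposition \ref{pro1.1} to reduce everything to $\frac{1}{\alpha}\int e^{u}(e^{\alpha u}-1)^2\le\frac12\int e^{u}(e^{2\alpha u}-1)$, absorb via Young's inequality to get $e^{u^*}\in L^{2\alpha+1}$ for all $\alpha<2$, and conclude for $N\le 9$ since $2\alpha+1>N/2$ is then attainable. The final $L^p\to L^\infty$ step you flag as delicate is exactly the paper's Lemma \ref{+lem2.1} (a Stampacchia-type truncation argument already proved in the preliminaries), so no additional verification is needed.
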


\begin{rem}
For the partial regularity result of extremal solution of (\ref{1.3}), the Hausdorff dimension of the singular set does not exceed $N-10$,  we refer to \cite{FL}.
\end{rem}

We also have the strong interest to establish liouville type theorems. We now consider $\Omega$ to be the entire space $\mathbb{R}^{N}$, if $f(x,u)=(F^{0}(x))^{\alpha}e^{u}$ where $\alpha>-2$, the equation becomes to

\begin{align}\label{1.5}
-Qu=(F^{0}(x))^{\alpha}e^{u}\indent\mbox{in}\indent\mathbb{R}^{N},
\end{align}
we concern the Liouville theorems for stable solutions and the finite Morse index solutions of this equation. Now, let us first recall some classic Liouville theorems of equation (\ref{1.5}) for the isotropic case, and the corresponding equation becomes
$$-\Delta u=|x|^{\alpha}e^{u}\indent\mbox{in}\indent\mathbb{R}^{N},$$
which is called H\'{e}non type Liouville equation. If $\alpha=0$, Farina in \cite{F} proved that there is no classical stable solution when $2\leq N\leq9$, and Dancer with Farina \cite{DF} obtain that if $3\leq N\leq9$, there is no classical solution which is stable outside a compact set of $\mathbb{R}^{N}$. Later Wang and Ye \cite{WY} extend their results to the case of $\alpha>-2$, and obtain the counterpart results for $2\leq N<10+4\alpha$ and $3\leq N<10+4\alpha^{-}$, where $\alpha^{-}=\min\{\alpha,0\}$, respectively. Recently, Ao and Yang \cite{AY} obtain some related results to the cosmic strings equation. For anisotropic case, we assume that the function $F$ in operator $Q$ satisfies

 \begin{align}\label{1.6}
 \langle F_{\xi}(x),F_{\xi}^{0}(y)\rangle=\frac{\langle x,y\rangle}{F(x)F^{0}(y)}\indent\mbox{for all }\indent x,y\in\mathbb{R}^{N},
 \end{align}
 where such an assumption was first introduced by Ferone and Kawohl in \cite{FK}. Recently, Fazly and Li \cite{FL} under this assumption derive the Liouville theorem for stable solutions and the finite Morse index solutions. So, a nature problem arises: can we extend the results of Wang and Ye \cite{WY} to anisotropic case? In this paper we give a positive answer.

The following are our main results, the first one is the Liouville theorem for stable solutions.
\begin{thm}\label{thm1.5}
For $\alpha>-2$, if $2\leq N<10+4\alpha$, then there is no weak stable solution to equation (\ref{1.5}).
\end{thm}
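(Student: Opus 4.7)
The strategy is to adapt the Farina--Dancer--Wang--Ye bootstrap to the anisotropic setting. The plan is to insert a one-parameter family of exponential test functions into the stability inequality, combine this with integration by parts against the equation to eliminate the $F(\nabla u)^2$ term, and then let a Wulff-ball cutoff expand to infinity, exploiting the homogeneity of $F^0$ to track the weight $(F^0)^\alpha$.

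First, I would test the stability inequality with $\phi=e^{tu}\eta$, where $t>0$ is a parameter to be chosen later and $\eta\in C_c^\infty(\mathbb{R}^N)$ is a cutoff. Writing $\phi_{x_i}=te^{tu}u_{x_i}\eta+e^{tu}\eta_{x_i}$ and invoking Proposition \ref{pro1.1}(3), which gives $F_{\xi_i}(\nabla u)u_{x_i}=F(\nabla u)$, the first quadratic form expands as
\begin{equation*}
F_{\xi_i}(\nabla u)F_{\xi_j}(\nabla u)\phi_{x_i}\phi_{x_j}=t^2 F(\nabla u)^2 e^{2tu}\eta^2+2te^{2tu}\eta F(\nabla u)\bigl(F_\xi(\nabla u)\cdot\nabla\eta\bigr)+e^{2tu}\bigl(F_\xi(\nabla u)\cdot\nabla\eta\bigr)^2.
\end{equation*}
Property (4), namely $F_{\xi_i\xi_j}(\nabla u)u_{x_j}=0$, collapses the Hessian quadratic form entirely onto the cutoff, leaving $F(\nabla u)e^{2tu}F_{\xi_i\xi_j}(\nabla u)\eta_{x_i}\eta_{x_j}$, which is bounded by $\Lambda F(\nabla u)e^{2tu}|\nabla\eta|^2$. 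After Cauchy--Schwarz on the cross terms, the stability inequality becomes
\begin{equation*}
\int_{\mathbb{R}^N}(F^0)^\alpha e^{(2t+1)u}\eta^2\,dx\le(t^2+\varepsilon)\int F(\nabla u)^2 e^{2tu}\eta^2\,dx+C_\varepsilon\int e^{2tu}\bigl(|\nabla\eta|^2+F(\nabla u)|\nabla\eta|^2\bigr)\,dx.
\end{equation*}

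Second, I would multiply the equation $-Qu=(F^0)^\alpha e^u$ by $e^{(2t-1)u}\eta^2$ and integrate by parts, using Proposition \ref{pro1.1}(3) once more, to obtain
\begin{equation*}
(2t-1)\int F(\nabla u)^2 e^{(2t-1)u}\eta^2\,dx+2\int F(\nabla u)\bigl(F_\xi(\nabla u)\cdot\nabla\eta\bigr)e^{(2t-1)u}\eta\,dx=\int(F^0)^\alpha e^{2tu}\eta^2\,dx.
\end{equation*}
A Young inequality on the cross term, followed by a weighted H\"older inequality to upgrade the factor $e^{(2t-1)u}$ to $e^{(2t+1)u}$ at the cost of a power of $(F^0)^\alpha$ (this is the step where Wang--Ye's Hénon adjustment enters), allows one to substitute this identity back into the stability inequality and deduce an estimate of the form
\begin{equation*}
A(t,\alpha)\int(F^0)^\alpha e^{(2t+1)u}\eta^2\,dx\le C\int_{\{\nabla\eta\ne 0\}}e^{2tu}\bigl(|\nabla\eta|^2+F(\nabla u)|\nabla\eta|^2\bigr)\,dx,
\end{equation*}
where the algebraic coefficient $A(t,\alpha)$ is strictly positive exactly when $t$ lies in the interval $\bigl(1,\,2+2\sqrt{1+\alpha}\bigr)$, a range that is non-empty for every $\alpha>-2$.

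Third, I would take $\eta$ to be a standard cutoff supported on the Wulff ball $B_{2R}(0)$, equal to $1$ on $B_R(0)$ with $|\nabla\eta|\le C/R$. Applying H\"older's inequality with respect to the weighted measure $(F^0)^\alpha\,dx$, and using $\int_{B_R}(F^0)^\alpha\,dx\sim R^{N+\alpha}$ (which follows from the homogeneity of $F^0$), one bounds the right-hand side by a positive power of $R$ times $\bigl(\int_{B_{2R}}(F^0)^\alpha e^{(2t+1)u}\,dx\bigr)^\theta$ with $\theta<1$, and then absorbs the $\theta$-factor into the left-hand side. A routine exponent chase, together with the admissible range for $t$ identified above, shows that the residual power of $R$ is strictly negative precisely when $N<10+4\alpha$. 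Sending $R\to\infty$ then forces $\int_{\mathbb{R}^N}(F^0)^\alpha e^{(2t+1)u}\,dx=0$, contradicting that $u$ solves (\ref{1.5}).

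The main obstacle, compared with the isotropic Hénon case treated in \cite{WY}, is controlling both quadratic forms in the anisotropic stability inequality simultaneously. Property (4) is precisely what renders the Hessian form harmless, but it produces a spurious \emph{linear} factor $F(\nabla u)|\nabla\eta|^2$ that is not present in the isotropic setting and must be tamed by a second Young inequality without degrading the sharp exponent $10+4\alpha$. A secondary technical point is that a weak stable solution in $H^1_{\mathrm{loc}}$ need not be smooth, so the formal manipulations must be justified by truncating $u$ at level $k$, running the argument for the smooth approximants, and then passing $k\to\infty$ by monotone convergence.
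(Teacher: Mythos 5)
Your strategy is essentially the paper's: the paper proves an a priori estimate (Proposition \ref{pro2.1}) by testing the stability inequality with $a_k(u)\phi$ and the equation with $b_k(u)\phi^2$, where $a_k,b_k$ are truncations of $e^{\beta u/2}$ and $e^{\beta u}$ (so $\beta=2t$ in your notation), using Proposition \ref{pro1.1}(3)--(4) exactly as you describe, absorbing for $\beta<4$, converting the error terms $e^{\beta u}|\nabla\phi|^2$ and $e^{\beta u}|\nabla\phi|^4/\phi^2$ by Young's inequality into $(F^0)^{-\beta\alpha}(|\nabla\psi|^2+|\nabla\psi|^4)^{\beta+1}$, and then scaling a Wulff-ball cutoff to get $R^{N-2(\beta+1)-\beta\alpha}\to 0$. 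You also correctly identified the two genuinely anisotropic points: Property (4) kills the Hessian form on the $\nabla u$ directions but leaves the linear term $F(\nabla u)|\nabla\eta|^2$, which costs a second Young inequality and produces the $|\nabla\eta|^4$ term; and the truncation $a_k,b_k$ is needed to legitimize exponential test functions for a merely weak solution.

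However, two steps in your middle section are wrong as stated. First, you multiply the equation by $e^{(2t-1)u}\eta^2$, which produces the energy term $\int F(\nabla u)^2e^{(2t-1)u}\eta^2$, whereas the stability inequality produces $\int F(\nabla u)^2e^{2tu}\eta^2$; these cannot be matched by a ``weighted H\"older upgrade'' --- there is no inequality between them in either direction. The correct multiplier is $e^{2tu}\eta^2$ (the paper's $b_k(u)\phi^2$), which gives $2t\int F^2(\nabla u)e^{2tu}\eta^2+\cdots=\int(F^0)^\alpha e^{(2t+1)u}\eta^2$ and substitutes directly. Second, the claimed admissible range $t\in(1,2+2\sqrt{1+\alpha})$ for positivity of $A(t,\alpha)$ is incorrect: the absorption constant is $t^2/(2t)=t/2$, so the constraint is $0<t<2$ (i.e.\ $\beta<4$), \emph{independent of} $\alpha$; the parameter $\alpha$ enters only later, through the Young step that trades $e^{2tu}|\nabla\eta|^2$ for $(F^0)^{-2t\alpha}|\nabla\eta|^{2(2t+1)}$, yielding the exponent $N-2(2t+1)-2t\alpha\to N-10-4\alpha$ as $t\to2$. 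If your interval were correct, taking $\alpha=0$ and $t$ near $4$ would give nonexistence of stable solutions up to $N<18$, which is false. Both errors are local and repairable, and with these corrections your argument coincides with the paper's.
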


Next, we state the result of Liouville theorem for the finite Morse index solutions.
\begin{thm}\label{thm1.6}
Let $\alpha>-2$, under the assumption of (\ref{1.6}), if $3\leq N<10+4\alpha^{-}$, then equation (\ref{1.5}) admits no weak solutions which is stable outside a compact set.
\end{thm}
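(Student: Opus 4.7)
The plan is to adapt the Dancer--Farina / Wang--Ye blow-down strategy to the Finsler setting and thus reduce Theorem~\ref{thm1.6} to Theorem~\ref{thm1.5}. The three principal steps are: (i) upgrade stability outside a compact set to global integrability $(F^0)^\alpha e^{(1+2t)u}\in L^1(\mathbb{R}^N)$ for a sufficiently large exponent $t$; (ii) pass to the limit along the scaling $u_R(x):=u(Rx)+(2+\alpha)\ln R$, which preserves (\ref{1.5}); and (iii) identify the limit $u_\infty$ as a global weak stable solution and contradict Theorem~\ref{thm1.5}, noting $N<10+4\alpha^-\le 10+4\alpha$.

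For step (i), by hypothesis there is $R_0>0$ such that the stability inequality holds for every $\phi\in C_c^\infty(\mathbb{R}^N\setminus\overline{B_{R_0}})$. Following the proof of Theorem~\ref{thm1.5}, I test stability with $\phi=\eta e^{tu}$ where $\eta\in C_c^\infty(\mathbb{R}^N\setminus\overline{B_{R_0}})$ and $t$ lies in an admissible range. Property~(4) of Proposition~\ref{pro1.1} annihilates the $\nabla u$-directions in the anisotropic Hessian term, leaving only $F\,F_{\xi_i\xi_j}\eta_{x_i}\eta_{x_j}e^{2tu}$, which is controlled by (\ref{1.2}); Property~(3) simplifies $F_\xi(\nabla u)\cdot\nabla u=F(\nabla u)$ in the quadratic part. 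Substituting the testing identity
\begin{align*}
2t\int F(\nabla u)^2\eta^2 e^{2tu}\,dx + 2\int \eta F(\nabla u)F_\xi(\nabla u)\cdot\nabla\eta\,e^{2tu}\,dx = \int(F^0)^\alpha e^{(1+2t)u}\eta^2\,dx,
\end{align*}
obtained by pairing (\ref{1.5}) with $\eta^2 e^{2tu}$, one eliminates $\int F(\nabla u)^2\eta^2 e^{2tu}$ and isolates
\begin{align*}
c(t,N,\alpha)\int(F^0)^\alpha e^{(1+2t)u}\eta^2\,dx \le C\int\bigl(|\nabla\eta|^2+F(\nabla\eta)^2\bigr)e^{2tu}\,dx,
\end{align*}
with $c(t,N,\alpha)>0$ precisely in the range $N<10+4\alpha^-$. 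A Wulff-radial cutoff with outer scale $R\to\infty$, together with the inner compact annulus $B_{2R_0}\setminus B_{R_0}$ handled by local boundedness of $u$, then produces $(F^0)^\alpha e^{(1+2t)u}\in L^1(\mathbb{R}^N)$ for some $t$ above the Pohozaev scaling threshold $\tfrac{2(N+\alpha)}{N-2}$.

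For steps (ii) and (iii), $u_R$ solves (\ref{1.5}) by the $2$-homogeneity of $Q$ and the $(2+\alpha)$-homogeneity of the right-hand side, and is stable on $\mathbb{R}^N\setminus\overline{B_{R_0/R}}$. The integrability from step (i), rescaled, gives uniform $L^1_{\mathrm{loc}}(\mathbb{R}^N\setminus\{0\})$ bounds on $(F^0)^\alpha e^{(1+2t)u_R}$. An anisotropic Brezis--Merle / Moser iteration then yields local $L^\infty$ bounds, and a subsequence converges locally uniformly to a limit $u_\infty$ on $\mathbb{R}^N\setminus\{0\}$; $u_\infty$ solves (\ref{1.5}) and is stable on $\mathbb{R}^N\setminus\{0\}$. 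Because $N\ge 3$, cutting with $\eta_\epsilon\in C_c^\infty(\mathbb{R}^N\setminus\{0\})$ satisfying $\eta_\epsilon=1$ outside $B_{2\epsilon}(0)$ and $|\nabla\eta_\epsilon|\le C/\epsilon$ gives $\|\phi-\phi\eta_\epsilon\|_{H^1}^2\le C\epsilon^{N-2}\to 0$, so stability of $u_\infty$ extends to all of $\mathbb{R}^N$ by density. Nontriviality of $u_\infty$ is secured by the anisotropic Pohozaev identity
\begin{align*}
\tfrac{N-2}{2}\int_{B_R}F(\nabla u)^2\,dx - (N+\alpha)\int_{B_R}(F^0)^\alpha e^u\,dx = \mathcal B_R,
\end{align*}
obtained by testing (\ref{1.5}) with $\langle x,\nabla u\rangle$ and using Properties~(3) and~(5) of Proposition~\ref{pro1.1}; assumption (\ref{1.6}) controls the Wulff-boundary integral $\mathcal B_R$ along a Fubini-selected sequence $R_n\to\infty$. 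Thus $u_\infty$ is a global weak stable solution of (\ref{1.5}), contradicting Theorem~\ref{thm1.5}.

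The main obstacle is the bookkeeping in step (i): the coefficient $c(t,N,\alpha)$ must remain strictly positive while simultaneously $1+2t$ exceeds the Pohozaev threshold $\tfrac{2(N+\alpha)}{N-2}$, and reconciling these two requirements is exactly the algebraic inequality that produces $N<10+4\alpha^-$. A secondary difficulty is the boundary control in the Pohozaev identity on Wulff spheres: assumption (\ref{1.6}) substitutes for the spherical symmetry of the isotropic case, guaranteeing that $F_\xi(\nabla u)$ interacts compatibly with the Finsler outward normal of $\partial B_R$ so that $\mathcal B_{R_n}\to 0$ can be arranged along a suitable averaging sequence.
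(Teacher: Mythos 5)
Your overall architecture --- blow down via $u_R(x)=u(Rx)+(2+\alpha)\ln R$ and contradict Theorem \ref{thm1.5} --- is not the route the paper takes, and it contains a gap I do not see how to close: nontriviality of the limit $u_\infty$. The capacity estimates of your step (i) (which are essentially the paper's Proposition \ref{pro2.1} applied on $\Omega=\mathbb{R}^N\setminus \overline{B_{R_0}}$) give only \emph{upper} bounds of the form $\int_{B_{2R}\setminus B_R}(F^0)^\alpha e^{(\beta+1)u}\,dx\le CR^{N-2(\beta+1)-\beta\alpha}$, which after rescaling are exactly at the critical rate; they give no lower bound on $\int_{B_{2R}\setminus B_R}(F^0)^\alpha e^{u}\,dx$, and the Pohozaev identity you invoke is an identity between quantities none of which you can bound from below. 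Worse, these same capacity estimates, fed into Serrin's local boundedness estimate for the subsolution $w=e^{\lambda u}$ with $\lambda=\frac{N+\alpha}{2(2+\alpha)}$, yield (as the paper shows) the decay $(F^0(x))^{2+\alpha}e^{u(x)}\to 0$ as $F^0(x)\to\infty$. This says precisely that $e^{u_R}\to 0$ locally uniformly away from the origin, i.e.\ any blow-down limit is degenerate, so no contradiction with Theorem \ref{thm1.5} can emerge from your step (iii). (This is why Dancer--Farina and Wang--Ye use blow-down for power nonlinearities but \emph{not} for the exponential one.) There are also smaller defects in step (i): the test function must be a truncation $a_k(u)\phi$ of $e^{tu}\phi$, since $\eta e^{tu}$ is not known to be admissible a priori; the positivity of the stability constant requires $2t<4$ independently of $N$ and $\alpha$ (the dimensional restriction enters only through the scaling exponent of the cutoff, $N-2(\beta+1)-\beta\alpha^-\le 0$); and the threshold $\frac{2(N+\alpha)}{N-2}$ is not the relevant one here --- the critical exponent is $\beta+1=\frac{N+\alpha}{2+\alpha}$.

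For contrast, after the capacity estimates the paper proceeds as follows: it first proves $\lim_{F^0(x)\to\infty}(F^0(x))^{2+\alpha}e^{u(x)}=0$ via the Serrin estimate mentioned above, and then derives the contradiction from the Wulff-spherical average $v(r)=\fint_{\partial B_r}u\,dS$. Assumption (\ref{1.6}) is used exactly here, to identify the conormal derivative on $\partial B_r$ so that $-v'(r)=\frac{1}{N\kappa_0 r^{N-1}}\int_{B_r}(F^0)^\alpha e^{u}\,dx\le \delta/r$ for large $r$; hence $e^{v(r)}\ge Cr^{-\delta}$, and Jensen's inequality forces $\max_{\partial B_r}(F^0)^{2+\alpha}e^{u}\ge Cr^{2+\alpha-\delta}\to\infty$, contradicting the decay. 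If you wish to salvage your write-up, keep steps analogous to your (i), but replace the blow-down and Pohozaev steps by this decay-plus-averaging argument.
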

The article is organized as follows. In section \ref{sec2}, we review some classic results, which play a key role in the proof of our main theorem. In section \ref{sec3}, we prove the main theorems Theorem \ref{thm1.1}-\ref{thm1.4}. In the last section, section \ref{sec4}, we give the proof of Liouville theorems Theorem \ref{thm1.5}, \ref{thm1.6}.

\section{Preliminaries}\label{sec2}
In this section, we first review some of the classic results, which are crucial in our paper. The first one is the weak and strong maximum principle, we refer to \cite{FK,S1} and references therein.

\begin{pro}\label{+pro2.1}
Suppose $u,v\in W^{1,2}(\Omega)$ satisfies $-Qu\leq-Qv$ in $\Omega$ and $u\leq v$ on $\partial\Omega$. Then $u\leq v$ in $\Omega$. Moreover, $u\equiv v$ or $u<v$ in $\Omega$.
\end{pro}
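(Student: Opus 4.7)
The plan is to decouple Proposition \ref{+pro2.1} into its weak and strong parts, and to treat both as consequences of the convexity of $\tfrac{1}{2}F^{2}$ together with the ellipticity condition (\ref{1.2}).

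For the weak maximum principle, I would test the weak form of $-Q(u-v)\leq 0$ against $\phi=(u-v)^{+}$, which belongs to $W_{0}^{1,2}(\Omega)$ since $u\leq v$ on $\partial\Omega$. This produces
$$\int_{\Omega}\bigl[F(\nabla u)\nabla F(\nabla u)-F(\nabla v)\nabla F(\nabla v)\bigr]\cdot\nabla(u-v)^{+}\,dx\leq 0.$$
The vector field $A(\xi):=F(\xi)\nabla F(\xi)=\tfrac{1}{2}\nabla_{\xi}(F^{2})(\xi)$ is the gradient of the convex function $\tfrac{1}{2}F^{2}$. Combining Property (3) of Proposition \ref{pro1.1} (which controls the radial derivative) with the ellipticity (\ref{1.2}) on $\xi^{\perp}$, one verifies that the full Hessian of $\tfrac{1}{2}F^{2}$ is positive definite away from the origin. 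Hence $A$ is strictly monotone: $(A(\xi)-A(\eta))\cdot(\xi-\eta)\geq 0$ with equality only if $\xi=\eta$. Since $\nabla(u-v)^{+}$ is supported on $\{u>v\}$, the integrand is pointwise nonnegative, so the inequality forces $\nabla u=\nabla v$ a.e. on $\{u>v\}$. Therefore $(u-v)^{+}$ is locally constant, and because it vanishes on $\partial\Omega$, it is identically zero; this gives $u\leq v$ in $\Omega$.

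For the strong maximum principle, assume $u\not\equiv v$ and set $w=v-u\geq 0$. Writing the difference of the nonlinear operators as a line integral,
$$-\partial_{i}\bigl(a^{ij}(x)\,\partial_{j}w\bigr)\geq 0,\qquad a^{ij}(x):=\int_{0}^{1}\bigl[F_{\xi_{i}}F_{\xi_{j}}+F\,F_{\xi_{i}\xi_{j}}\bigr]\bigl(\nabla u+t\nabla w\bigr)\,dt,$$
we obtain a linear elliptic inequality for $w$. The matrix $F_{\xi_{i}}F_{\xi_{j}}+F\,F_{\xi_{i}\xi_{j}}$ is positive definite at every nonzero argument, again by combining Properties (3)–(4) of Proposition \ref{pro1.1} with (\ref{1.2}). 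Applying the classical strong maximum principle for linear (possibly degenerate) elliptic inequalities in the form of Serrin \cite{S1}, or its Finsler counterpart in Ferone–Kawohl \cite{FK}, yields the dichotomy $w\equiv 0$ or $w>0$ in $\Omega$.

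The principal technical obstacle is the transition between the two parts: the operator $Q$ is degenerate wherever $\nabla u$ or $\nabla v$ vanishes, and the ellipticity (\ref{1.2}) is only posited on $\xi^{\perp}$. The key step is thus to combine (\ref{1.2}) with Properties (3)–(4) (which encode that $\xi$ is in the kernel of $D^{2}F(\xi)$ but is an eigendirection for $D^{2}(\tfrac{1}{2}F^{2})(\xi)$ with eigenvalue $1$) to obtain full positive definiteness of $\nabla A$ away from the origin. Once this is secured, both the monotonicity argument for the weak part and the quoted strong maximum principle for the linearization apply directly.
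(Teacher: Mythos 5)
The paper offers no proof of Proposition \ref{+pro2.1} at all --- it simply refers to \cite{FK,S1} --- so your write-up is supplying an argument rather than paralleling one, and what you supply is the standard proof for this class of operators; it is correct in outline. Two points should be made explicit to close it. First, strict monotonicity of $A=\nabla(\tfrac12F^{2})$ cannot be obtained purely by integrating the Hessian when the segment $[\eta,\xi]$ passes through the origin, where $F\notin C^{2}$; there you should either use one-homogeneity of $A$ directly (if $\eta=-s\xi$ with $s>0$, then $(A(\xi)-A(\eta))\cdot(\xi-\eta)=(1+s)^{2}F^{2}(\xi)>0$), or note that a $C^{1}$ convex function whose Hessian is positive definite off a single point cannot be affine on a nondegenerate segment. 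Second, for the strong part the decisive fact is not merely that $D^{2}(\tfrac12F^{2})(\xi)=F_{\xi}\otimes F_{\xi}+F\,D^{2}F$ is positive definite for each $\xi\neq0$, but that it is homogeneous of degree zero, hence \emph{uniformly} elliptic with eigenvalues pinched between positive constants independent of $\xi$; consequently your $a^{ij}$ are uniformly elliptic, bounded measurable coefficients (discontinuous where $\nabla u$, $\nabla v$ vanish, and definable arbitrarily where $\nabla u=\nabla v=0$ since there $\partial_{j}w=0$), and the maximum principle you must quote is the one for such operators via the weak Harnack inequality (De Giorgi--Nash--Moser/Serrin), not a ``degenerate'' version --- this also gives the local continuity of $u$ and $v$ needed to read $u<v$ pointwise. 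With those clarifications the argument is complete and is essentially the proof the cited references contain.
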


Next, we will prove the $L^{\infty}$-regularity, the proof is similar to \cite{FS}, with some small modifications. For the convenience of readers, we carry it out here. Before proving this result, we first state the following conclusion, given in \cite{S2}.

\begin{pro}\label{+pro2.2}
Assume that $\phi: [0,\infty)\rightarrow[0,\infty)$ is a nonincreasing function such that if $h>k>k_{0}$, for some $\alpha>0$, $\beta>1$ and $\phi(h)\leq \frac{C(\phi(k))^{\beta}}{(h-k)^{\alpha}}$, then $\phi(k_{0}+d)=0$, where $d^{\alpha}=C2^{\frac{\alpha\beta}{\beta-1}}\phi(k_{0})^{\beta-1}$.
\end{pro}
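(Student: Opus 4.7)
The plan is to use a standard De Giorgi--Stampacchia iteration argument, building a geometric sequence of levels that converges to $k_{0}+d$ and showing that $\phi$ decays along this sequence to $0$. Concretely, I would define
$$k_{n}:=k_{0}+d\bigl(1-2^{-n}\bigr),\qquad n\geq 0,$$
so that $k_{0}=k_{0}$, $k_{n}\nearrow k_{0}+d$, and $k_{n+1}-k_{n}=d\,2^{-(n+1)}$. Since each $k_{n+1}>k_{n}\geq k_{0}$, the hypothesis applies with $h=k_{n+1}$, $k=k_{n}$, giving the recursion
$$\phi(k_{n+1})\leq\frac{C\,\phi(k_{n})^{\beta}}{(k_{n+1}-k_{n})^{\alpha}}=\frac{C\,2^{\alpha(n+1)}}{d^{\alpha}}\,\phi(k_{n})^{\beta}.$$

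Next I would try to propagate a geometric decay estimate of the form $\phi(k_{n})\leq\phi(k_{0})\,2^{-\mu n}$ by induction, and solve for the correct exponent $\mu$. Substituting the induction hypothesis into the recursion yields
$$\phi(k_{n+1})\leq\frac{C\,\phi(k_{0})^{\beta-1}}{d^{\alpha}}\cdot\phi(k_{0})\cdot 2^{\alpha(n+1)-\mu\beta n}.$$
To close the induction one needs $\alpha(n+1)-\mu\beta n\leq -\mu(n+1)$ for all $n$, which forces the linear-in-$n$ terms to cancel, i.e.\ $\mu(\beta-1)=\alpha$, hence $\mu=\alpha/(\beta-1)$. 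With this choice the residual constant condition becomes
$$\frac{C\,\phi(k_{0})^{\beta-1}}{d^{\alpha}}\,2^{\alpha\beta/(\beta-1)}\leq 1,$$
which is exactly the definition of $d$ in the statement. Hence the induction succeeds and $\phi(k_{n})\leq\phi(k_{0})\,2^{-\mu n}\to 0$.

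Finally I would pass to the limit: because $\phi$ is nonincreasing and $k_{n}\leq k_{0}+d$ for every $n$, one has $0\leq\phi(k_{0}+d)\leq\phi(k_{n})\to 0$, so $\phi(k_{0}+d)=0$, as desired. There is no real obstacle here; the only delicate bookkeeping is choosing the decay rate $\mu$ so that the exponent of $2$ in the recursion vanishes, and verifying that the prefactor condition it leaves behind coincides with the prescribed value of $d^{\alpha}$. Everything else is a direct induction and monotonicity argument.
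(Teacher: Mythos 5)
Your argument is correct and is precisely the classical De Giorgi--Stampacchia iteration; the paper itself gives no proof of this proposition, simply citing Stampacchia \cite{S2}, and your choice of levels $k_{n}=k_{0}+d(1-2^{-n})$, the induction ansatz $\phi(k_{n})\leq\phi(k_{0})2^{-\mu n}$ with $\mu=\alpha/(\beta-1)$, and the final monotonicity step are exactly the standard proof. The only (harmless) quibble is that your base step applies the hypothesis with $k=k_{0}$, whereas the statement as written requires $k>k_{0}$; this is an artifact of the paper's formulation (the usual version reads $h>k\geq k_{0}$) and can be repaired by starting the iteration at $k_{0}+\varepsilon$ and letting $\varepsilon\to 0$.
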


\begin{lem}\label{+lem2.1}
Let $u\in W_{0}^{1,2}(\Omega)$ be a solution of
 \begin{equation}\nonumber
\left\{
\begin{aligned}
-Qu&=f \indent \mbox{in}\indent\Omega\\
u&=0 \indent \mbox{on}\indent\partial\Omega.\\
\end{aligned}
\right.
\end{equation}
where $\Omega\subset\mathbb{R}^{N}$ be a bounded domain, $f\in L^{p}(\Omega)$ with $p>\frac{N}{2}$, then $u\in L^{\infty}(\Omega)$.
\end{lem}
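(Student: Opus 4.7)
The plan is a De Giorgi--Stampacchia truncation argument, following the scheme indicated by the statement of Proposition \ref{+pro2.2}. The goal is to produce a decay estimate of the form $\phi(h)\le C(h-k)^{-\alpha}\phi(k)^{\beta}$ for the superlevel-set measure $\phi(k):=|\{x\in\Omega:u(x)>k\}|$, with $\beta>1$, and then invoke Proposition \ref{+pro2.2}. Boundedness from below follows by the same argument applied to $-u$.

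First I would test the equation with $G_{k}(u)=(u-k)^{+}$, which belongs to $W_{0}^{1,2}(\Omega)$ because $u$ vanishes on $\partial\Omega$. Using the homogeneity identity $\langle\xi,F_{\xi}(\xi)\rangle=F(\xi)$ from Proposition \ref{pro1.1}(3), the left-hand side collapses to $\int_{A_{k}}F(\nabla u)^{2}\,dx$, where $A_{k}:=\{u>k\}$. The lower bound $F(\xi)\ge a|\xi|$ then gives
\begin{equation*}
a^{2}\int_{A_{k}}|\nabla u|^{2}\,dx\le\int_{A_{k}}f\,(u-k)\,dx.
\end{equation*}
On the right I would apply H\"older with exponents $p$ and $p'=p/(p-1)$, then interpolate $\|G_{k}(u)\|_{L^{p'}(A_{k})}\le\|G_{k}(u)\|_{L^{2^{*}}}\phi(k)^{1/p'-1/2^{*}}$ (this interpolation exponent is nonnegative precisely because $p>N/2$ forces $p'<2^{*}/2<2^{*}$), and close the loop via the Sobolev embedding $\|G_{k}(u)\|_{L^{2^{*}}}\le C\|\nabla G_{k}(u)\|_{L^{2}}$. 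Cancelling one factor of $\|\nabla G_{k}(u)\|_{L^{2}}$ yields
\begin{equation*}
\|\nabla G_{k}(u)\|_{L^{2}}\le C\,\|f\|_{L^{p}}\,\phi(k)^{1/p'-1/2^{*}}.
\end{equation*}

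For $h>k$ the elementary inequality $(h-k)\chi_{A_{h}}\le G_{k}(u)$ combined with Sobolev once more gives
\begin{equation*}
(h-k)^{2^{*}}\phi(h)\le\int G_{k}(u)^{2^{*}}\,dx\le C\,\phi(k)^{\,2^{*}(1/p'-1/2^{*})},
\end{equation*}
which is the hypothesis of Proposition \ref{+pro2.2} with $\alpha=2^{*}$ and $\beta=2^{*}/p'-1$. The condition $p>N/2$ is exactly what makes $\beta>1$, so Proposition \ref{+pro2.2} produces a level $k_{0}+d$ beyond which $\phi$ vanishes, i.e.\ $u$ is bounded above.

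The only nontrivial step is justifying the algebraic identity $F(\nabla u)F_{\xi}(\nabla u)\cdot\nabla u=F(\nabla u)^{2}$ pointwise a.e.\ on $\{u>k\}$; this is immediate from the $1$-homogeneity of $F$ via Proposition \ref{pro1.1}(3), but one should check that $G_{k}(u)$ is a legitimate test function in the weak formulation (which is standard since $G_{k}(u)\in W_{0}^{1,2}(\Omega)$ and $f\in L^{p}\subset L^{1}$). Everything else is routine truncation bookkeeping; no anisotropic subtlety beyond the uniform bound $F\ge a|\cdot|$ is needed, because we only use the equation to extract an $H^{1}$-energy estimate, not full ellipticity.
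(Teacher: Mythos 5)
Your proposal is correct and follows essentially the same Stampacchia truncation scheme as the paper: test with the truncation, use Euler's identity $\langle\xi,F_{\xi}(\xi)\rangle=F(\xi)$ and $F(\xi)\ge a|\xi|$ to get the energy of $(u-k)^{+}$, combine H\"older and Sobolev to obtain the superlevel-set decay with the same exponent $\beta=2^{*}(1-1/p)-1>1$, and invoke Proposition \ref{+pro2.2}. The only cosmetic difference is that the paper truncates $|u|$ via $u_{k}=\mathrm{sign}(u)(|u|-k)_{+}$ to treat both bounds at once, whereas you treat the upper bound and then apply the argument to $-u$ (which works since $Q$ is odd by the homogeneity of $F$).
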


\begin{proof}
Let $u_{k}=sign(u)(|u|-k)_{+}$ where $(|u|-k)_{+}=max\{|u|-k,0\}$ for some nonnegative constant $k$. Define $\Omega_{k}=\{x\in\Omega: |u|>k\}$, then, we have
$$\int_{\Omega_{k}}F^{2}(\nabla u_{k})dx=\int_{\Omega_{k}}fu_{k}dx\leq\left(\int_{\Omega_{k}}|u_{k}|^{2^{\ast}}dx\right)^{\frac{1}{2^{\ast}}}\left(\int_{\Omega_{k}}|f|^{p}dx\right)^{\frac{1}{p}}|\Omega_{k}|^{1-(\frac{1}{p}+\frac{1}{2^{\ast}})},$$
where $2^{\ast}=\frac{2N}{N-2}$ by sobolev inequality, we obtain
$$\left(\int_{\Omega_{k}}|u_{k}|^{2^{\ast}}dx\right)^{\frac{1}{2^{\ast}}}\leq C\left(\int_{\Omega_{k}}F^{2}(\nabla u_{k})dx\right)^{\frac{1}{2}},$$
it follows that
$$\left(\int_{\Omega_{k}}|u_{k}|^{2^{\ast}}dx\right)^{\frac{1}{2^{\ast}}}\leq C\left(\int_{\Omega_{k}}|f|^{p}dx\right)^{\frac{1}{p}}|\Omega_{k}|^{1-(\frac{1}{p}+\frac{1}{2^{\ast}})}.$$
For $0<k<h$, we have $\Omega_{h}\subset\Omega_{k}$ and
$$|\Omega_{h}|^{\frac{1}{2^{\ast}}}(h-k)=\left(\int_{\Omega_{h}}(h-k)^{2^{\ast}}\right)^{\frac{1}{2^{\ast}}}\leq\left(\int_{\Omega_{k}}|u_{k}|^{2^{\ast}}\right)^{\frac{1}{2^{\ast}}}.$$
Hence, we have
$$|\Omega_{h}|\leq\frac{C}{(h-k)^{2^{\ast}}}\left(\int_{\Omega_{k}}|f|^{p}dx\right)^{\frac{2^{\ast}}{p}}|\Omega_{k}|^{2^{\ast}[1-(\frac{1}{p}+\frac{1}{2^{\ast}})]},$$
since, $p>\frac{N}{2}$, we have $2^{\ast}[1-(\frac{1}{p}+\frac{1}{2^{\ast}})]>1$, let $\phi(h)=|\Omega_{h}|$, $\alpha=2^{\ast}$, $\beta=2^{\ast}[1-(\frac{1}{p}+\frac{1}{2^{\ast}})]$ and $k_{0}=0$, we have
$$\phi(h)\leq\frac{C}{(h-k)^{\alpha}}\phi(k)^{\beta}\indent\mbox{for any }h>k>0,$$
it follows from Proposition, we have $\phi(d)=0$, where $d=C2^{\frac{\alpha\beta}{\beta-1}}\phi(k_{0})^{\beta-1}$, then $\parallel u\parallel_{L^{\infty}(\Omega)}<\infty$.

\end{proof}

Wang and Xia \cite{WX12} proved the following famous Moser-Trudinger inequality in an anisotropic version.

\begin{pro}\label{+pro2.3}
Let $\Omega$ be a bounded domain in $\mathbb{R}^{N}$, $N\geq2$. Let $u\in W_{0}^{1,N}(\Omega)$, then there exist a constant $C(N)$ such that
$$\int_{\Omega}\exp\left[\frac{\beta|u|}{\parallel F(\nabla u)\parallel_{L^{N}}}\right]dx\leq C(N)|\Omega|,$$
where $\beta\leq\beta_{N}=N^{\frac{N}{N-1}}\kappa_{0}^{\frac{1}{N-1}}$, $\beta_{N}$ is optimal in the sense that if $\beta>\beta_{N}$, we can find a sequence $\{u_{k}\}$ such that  $\int_{\Omega}\exp\left[\frac{\beta|u_{k}|}{\parallel F(\nabla u_{k})\parallel_{L^{N}}}\right]dx$ diverges. Moreover, by Young's inequality, there exists positive constants $c_{1}$, $c_{2}$ depending only on $N$, such that for any $\gamma>0$,
\begin{align}\label{+2.1}
\int_{\Omega}e^{\gamma |u|}dx\leq c_{1}|\Omega|\exp(c_{2}\gamma^{N}\parallel F(\nabla u)\parallel_{L^{N}}^{N}).
\end{align}
\end{pro}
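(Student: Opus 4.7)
The plan is to execute Moser's classical argument in the anisotropic setting by combining convex (Wulff) symmetrization with the one-dimensional Moser lemma. To $u\in W^{1,N}_0(\Omega)$ associate its convex symmetrization $u^{\star}$: on the Wulff ball $\Omega^{\star}=B_R(0)$ with $|B_R|=|\Omega|$, $u^{\star}$ is the unique $F^0$-radial, non-increasing rearrangement equimeasurable with $|u|$. Since both sides of the inequality depend only on the distribution function of $|u|$ and on $\|F(\nabla u)\|_{L^N}$, the anisotropic P\'olya--Szeg\H{o} principle of Alvino--Ferone--Trombetti--Lions,
\[
\int_\Omega F(\nabla u)^N\,dx \;\geq\; \int_{\Omega^{\star}} F(\nabla u^{\star})^N\,dx,
\]
reduces the proof to $F^0$-radial functions on a Wulff ball.

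Writing $u^{\star}(x)=g(F^0(x))$ with $g:[0,R]\to[0,\infty)$ non-increasing and $g(R)=0$, property (5) of Proposition \ref{pro1.1} yields $F(\nabla u^{\star})=|g'(F^0(x))|$, and the Wulff coarea identity $|B_r|=\kappa_0 r^N$ reduces both sides to one-dimensional integrals with weight $N\kappa_0 r^{N-1}\,dr$ on $[0,R]$. Setting $v(t):=N\kappa_0^{1/N}\,g(Re^{-t/N})$ under the substitution $t=N\log(R/r)$ normalises the Dirichlet integral as $\int_0^\infty|v'(t)|^N\,dt=\int_{\Omega^{\star}}F(\nabla u^{\star})^N\,dx$, with $v(0)=0$; the Jacobian identity $r^{N-1}dr=-R^N e^{-t}/N\,dt$ then recasts the exponential integral (read in the correct Moser--Trudinger form $\exp[\beta(|u|/\|F(\nabla u)\|_{L^N})^{N/(N-1)}]$, which is what is actually intended in the displayed statement) as
\[
|\Omega|\int_0^\infty\exp\Big(\tfrac{\beta}{(N\kappa_0^{1/N})^{N/(N-1)}}|v(t)|^{N/(N-1)}-t\Big)\,dt,
\]
and the classical one-dimensional Moser lemma (if $v(0)=0$ and $\int|v'|^N\leq 1$, then $\int\exp(|v|^{N/(N-1)}-t)\,dt\leq C(N)$) gives the inequality precisely when $\beta\leq\beta_N=(N\kappa_0^{1/N})^{N/(N-1)}=N^{N/(N-1)}\kappa_0^{1/(N-1)}$. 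Optimality of $\beta_N$ is then verified by inserting the $F^0$-radial truncated-logarithm Moser family
\[
u_k(x)=\frac{1}{(N\kappa_0)^{1/N}}\begin{cases}(\log k)^{(N-1)/N}, & F^0(x)\leq R/k,\\[2pt] \log(R/F^0(x))/(\log k)^{1/N}, & R/k<F^0(x)\leq R,\end{cases}
\]
which has $\|F(\nabla u_k)\|_{L^N}\leq 1$ yet makes the exponential integral diverge for every $\beta>\beta_N$.

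For the consequence \eqref{+2.1}, I will apply Young's inequality with conjugate exponents $N'=N/(N-1)$ and $N$ to the splitting $\gamma|u|=(|u|/\|F(\nabla u)\|_{L^N})\cdot(\gamma\|F(\nabla u)\|_{L^N})$. Tuning the Young parameter so that the first summand receives coefficient $\beta_N$ yields the pointwise bound
\[
\gamma|u|\leq\beta_N\Big(\tfrac{|u|}{\|F(\nabla u)\|_{L^N}}\Big)^{N/(N-1)}+c_2\,\gamma^N\|F(\nabla u)\|_{L^N}^N,
\]
whereupon exponentiating and integrating via the Moser--Trudinger inequality just proved gives \eqref{+2.1} with $c_1=C(N)$. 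The chief technical obstacle is not the P\'olya--Szeg\H{o} reduction or the 1D Moser lemma (both classical), but the accurate tracking of the anisotropic Jacobian $N\kappa_0 r^{N-1}$ through the change of variables, which is what converts the isotropic sharp constant $N\omega_{N-1}^{1/(N-1)}$ into the anisotropic $\beta_N=N^{N/(N-1)}\kappa_0^{1/(N-1)}$, together with the construction of an $F^0$-radial extremizing sequence adapted to the Wulff geometry.
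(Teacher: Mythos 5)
Your proof is correct, but note that the paper does not prove this proposition at all: it is quoted from Wang--Xia \cite{WX12}, and your argument (convex symmetrization via the anisotropic P\'olya--Szeg\H{o} inequality of \cite{AFTL}, reduction to an $F^{0}$-radial profile, the one-dimensional Moser lemma, and the truncated-logarithm family for sharpness) is essentially the proof given in that reference, so you have supplied the omitted details rather than found a new route. You are also right to read the exponent as $\beta\bigl(|u|/\parallel F(\nabla u)\parallel_{L^{N}}\bigr)^{N/(N-1)}$: as literally printed the exponent is linear in $|u|$, in which case the bound holds for every $\beta$ by \eqref{+2.1} itself and the claimed optimality of $\beta_{N}$ would be false, so the displayed statement contains a typo that your reading corrects. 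The only cosmetic caveat is that the constants $c_{1},c_{2}$ in \eqref{+2.1} (and $\beta_{N}$ itself) depend on $\kappa_{0}$, hence on $F$ and not on $N$ alone, but that is an imprecision of the statement, not of your argument.
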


The following result is Mountain-Pass lemma, which was first put forward by Ambrosetti and Rabinowitz \cite{AR}, and it has attracted a lot of many mathematicians to study the critical point theory, see \cite{BN,BN1,NT} and reference therein.

\begin{pro}\label{+pro2.4}
Let $X$ be a real Banach space and $I\in C^{1}(X,\mathbb{R})$. Suppose there exist two real number $a$ and $R$, $R>0$, such that
$$I(x)\geq a\indent\mbox{on the sphere }\indent S_{R}=\{x\in X: \parallel x\parallel=R\},$$
$$I(0)<a\indent\mbox{and}\indent I(e)<a\indent\mbox{for some }e\mbox{ with }\parallel e\parallel>R.$$
Let $\Gamma$ denote the class of continuous paths joining $0$ and $e$, that is,
$$\Gamma=\{g\in C([0,1], X)| g(0)=0, g(1)=e\},$$
set
$$c=\inf_{g\in\Gamma}\max_{t\in[0,1]}I(g(t)),$$
then $c\geq a$, and exist Palais-Smale sequence, that is, exist sequence $\{x_{n}\}\subset X$ such that
$$I(x_{n})\rightarrow c\indent\mbox{and}\indent I'(x_{n})\rightarrow0.$$
Moreover, if $I$ satisfies the Palais-Smale compactness condition, namely, for any  Palais-Smale sequence contains a convergent subsequence. Then $c$ is a critical value of $I$.
\end{pro}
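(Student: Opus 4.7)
The plan is to prove the two inequalities separately and then invoke compactness for the final assertion. First I would establish $c\geq a$ by an intermediate value argument: for any $g\in\Gamma$ the map $t\mapsto\|g(t)\|$ is continuous with $\|g(0)\|=0$ and $\|g(1)\|=\|e\|>R$, so some $t_0\in(0,1)$ yields $g(t_0)\in S_R$, whence $\max_{t\in[0,1]}I(g(t))\geq I(g(t_0))\geq a$. Taking the infimum over $g\in\Gamma$ gives $c\geq a$ immediately.

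The heart of the proof is producing the Palais--Smale sequence at level $c$, and I would proceed by contradiction through the standard deformation lemma. Suppose no such sequence exists; then by definition there exist $\bar\varepsilon\in(0,\tfrac12\min\{a-I(0),\,a-I(e)\})$ and $\delta>0$ with $\|I'(x)\|_{X^{*}}\geq\delta$ whenever $|I(x)-c|\leq 2\bar\varepsilon$. On this strip I would build a locally Lipschitz pseudo-gradient vector field $V$ satisfying $\|V(x)\|\leq 1$ and $\langle I'(x),V(x)\rangle\geq\tfrac12\|I'(x)\|_{X^{*}}$, multiplied by a cutoff $\chi\colon X\to[0,1]$ that equals $1$ on $\{|I-c|\leq\bar\varepsilon\}$ and vanishes outside $\{|I-c|\leq 2\bar\varepsilon\}$. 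The ODE $\dot\eta=-\chi(\eta)V(\eta)$ with $\eta(0,x)=x$ generates a continuous flow on $X$, and integrating $\tfrac{d}{ds}I(\eta(s,x))\leq -\tfrac{\delta}{2}\chi(\eta(s,x))$ yields, for any sufficiently small $\varepsilon\in(0,\bar\varepsilon)$, a time $s^{\ast}$ after which $\eta(s^{\ast},\cdot)$ sends the sublevel set $\{I\leq c+\varepsilon\}$ into $\{I\leq c-\varepsilon\}$.

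Next I would apply this flow to a near-optimal path. Choose $g\in\Gamma$ with $\max_{t}I(g(t))\leq c+\varepsilon$ and set $\tilde g(t):=\eta(s^{\ast},g(t))$. Because $I(0),I(e)<a\leq c$, the choice of $\bar\varepsilon$ forces both endpoints to lie outside the support of $\chi$, so they are fixed by the flow; hence $\tilde g\in\Gamma$. But then $\max_{t}I(\tilde g(t))\leq c-\varepsilon$, contradicting the definition of $c$ as an infimum. This contradiction delivers a sequence $\{x_n\}\subset X$ with $I(x_n)\to c$ and $I'(x_n)\to 0$ in $X^{*}$.

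For the final assertion, the Palais--Smale condition provides a subsequence of $\{x_n\}$ converging to some $\bar x\in X$; continuity of $I$ and $I'$ then forces $I(\bar x)=c$ and $I'(\bar x)=0$, so $c$ is a critical value. The main technical obstacle I expect is the pseudo-gradient construction together with the verification that the cutoff can be chosen narrow enough to leave both endpoints untouched; this is precisely where the strict inequalities $I(0)<a$ and $I(e)<a$ are used, since they create the buffer between the levels of the endpoints and the strip around $c$ that must be flowed downward.
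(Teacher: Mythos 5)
The paper gives no proof of this proposition: it is quoted in the preliminaries as the classical mountain-pass theorem of Ambrosetti and Rabinowitz, with a citation to \cite{AR}, so there is no internal argument to compare yours against. Your outline is correct and is the standard modern proof. The intermediate-value argument for $c\geq a$ is exactly right, and your route to the Palais--Smale sequence --- negate its existence to obtain $\delta>0$ with $\|I'\|\geq\delta$ on the strip $\{|I-c|\leq 2\bar\varepsilon\}$, run a truncated, normalized pseudo-gradient flow, and push a near-optimal path below level $c-\varepsilon$, while the choice $2\bar\varepsilon<\min\{a-I(0),a-I(e)\}\leq\min\{c-I(0),c-I(e)\}$ keeps both endpoints outside the support of the cutoff and hence fixed by the flow --- is the quantitative deformation argument (in the spirit of Willem) rather than the original Ambrosetti--Rabinowitz proof, which invoked a deformation theorem under a global Palais--Smale hypothesis. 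That distinction is actually the right one for the statement as written, since it asserts the existence of a Palais--Smale sequence at level $c$ \emph{before} any compactness is assumed; your argument (or, alternatively, Ekeland's variational principle) is what delivers that refinement. The only deferred ingredients are the pseudo-gradient lemma and global solvability of the flow (immediate here since the field is bounded by $1$), both standard, and the final passage from the Palais--Smale condition to a critical point via continuity of $I$ and $I'$ is correct.
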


Finally, we introduce the first eigenvalue result for the Finsler Laplacian, see \cite{BFK,DG}.

\begin{pro}\label{+pro2.5}
There exists the first eigenvalue of
 \begin{equation}\nonumber
\left\{
\begin{aligned}\label{1 lambda}
-Qu&=\lambda u \indent \mbox{in}\indent\Omega\\
u&=0 \indent \mbox{on}\indent\partial\Omega,\\
\end{aligned}
\right.
\end{equation}
namely $\lambda(\Omega)>0$, and it is simple. Moreover, the first eigenfunctions have a sign and belong to $C^{1,\alpha}$. Finally, the following variational formulation holds:
$$\lambda(\Omega)=\min_{u\in W_{0}^{1,2}(\Omega)\setminus\{0\}}\frac{\int_{\Omega}F^{2}(\nabla u)dx}{\int_{\Omega}|u|^{2}dx}.$$
\end{pro}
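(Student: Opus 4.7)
The plan is to establish Proposition \ref{+pro2.5} by the direct method of the calculus of variations applied to the Rayleigh quotient, and to deduce the sign, regularity, and simplicity statements as consequences. First I would define
$$\lambda(\Omega) := \inf_{u \in W_0^{1,2}(\Omega)\setminus\{0\}} \frac{\int_\Omega F^2(\nabla u)\,dx}{\int_\Omega u^2\,dx},$$
and observe that the bounds $a|\xi|\leq F(\xi)\leq b|\xi|$ make the numerator comparable to $\|\nabla u\|_{L^2}^2$, so by the Poincar\'e inequality $\lambda(\Omega)$ is a finite positive number. Choose a minimizing sequence $u_n$ with $\|u_n\|_{L^2}=1$; coercivity bounds it in $W_0^{1,2}(\Omega)$, and after a subsequence $u_n \rightharpoonup u_*$ weakly in $W_0^{1,2}$ and strongly in $L^2$ by Rellich, so $\|u_*\|_{L^2}=1$. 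Since $F$ is nonnegative, convex, and $1$-homogeneous, $F^2$ is convex, hence $u\mapsto \int F^2(\nabla u)\,dx$ is weakly lower semicontinuous; thus $u_*$ is a minimizer. A first variation using $\nabla_\xi(F^2)=2FF_\xi$ gives the weak Euler--Lagrange equation $-Qu_* = \lambda(\Omega) u_*$.

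For the sign of the eigenfunction, I would use that $F(t\xi)=|t|F(\xi)$ makes $F$ even, and since $\nabla|u_*|=\pm\nabla u_*$ a.e., we have $F(\nabla|u_*|)=F(\nabla u_*)$ a.e. Therefore $|u_*|$ is also a minimizer and a nontrivial nonnegative weak solution of $-Qw = \lambda(\Omega) w$; the strong maximum principle of Proposition \ref{+pro2.1}, comparing $|u_*|$ with $0$, forces $|u_*|>0$ throughout $\Omega$. For regularity, a standard bootstrap starting from $u_*\in L^{2^*}$ combined with Lemma \ref{+lem2.1} (since $\lambda(\Omega)u_*$ inherits higher integrability at each step) gives $u_*\in L^\infty(\Omega)$. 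Then the uniform ellipticity (\ref{1.2}) of $F$ places $Q$ in the scope of the quasilinear $C^{1,\alpha}$ regularity theory (DiBenedetto--Tolksdorf), producing $u_*\in C^{1,\alpha}(\Omega)$.

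The main obstacle will be the simplicity of $\lambda(\Omega)$. My plan is to derive an anisotropic Picone-type inequality from the strict convexity of $g(\xi):=\tfrac12 F^2(\xi)$: for $u \geq 0$ and $v > 0$ of class $C^1$,
$$g(\nabla u) \;\geq\; g\Bigl(\tfrac{u}{v}\nabla v\Bigr) + \nabla g\Bigl(\tfrac{u}{v}\nabla v\Bigr)\cdot\Bigl(\nabla u - \tfrac{u}{v}\nabla v\Bigr),$$
which, after using the $1$-homogeneity of $\nabla g = FF_\xi$ and Proposition \ref{pro1.1}(3), reduces to
$$F^2(\nabla u) \;\geq\; F(\nabla v)F_\xi(\nabla v)\cdot\nabla\Bigl(\tfrac{u^2}{v}\Bigr),$$
with equality forcing $\nabla u = (u/v)\nabla v$, i.e.\ $u/v$ constant. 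Given two positive $L^2$-normalized eigenfunctions $u_1,u_2$, I would integrate this inequality with $u=u_1,v=u_2$ and test the equation $-Qu_2=\lambda(\Omega) u_2$ against $u_1^2/u_2$ (justified by approximating with $u_1^2/(u_2+\epsilon)$ and passing to the limit using the Hopf-type boundary behavior coming from $C^{1,\alpha}$ regularity), obtaining $\lambda(\Omega) = \int F^2(\nabla u_1) \geq \lambda(\Omega)\int u_1^2 = \lambda(\Omega)$. Equality thus holds everywhere, and the strict convexity of $F^2$, verified from $(F^2)_{\xi_i\xi_j}V_iV_j = 2(F_{\xi_i}V_i)^2 + 2FF_{\xi_i\xi_j}V_iV_j>0$ via (\ref{1.2}) and Proposition \ref{pro1.1}(3)--(4), forces $u_1=cu_2$.
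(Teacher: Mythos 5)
The paper offers no proof of this proposition at all: it is quoted from the literature (\cite{BFK,DG}), so there is no internal argument to compare against. Your proposal is essentially the standard proof given in those references, and it is sound. The direct method works exactly as you say: $F^2$ is convex (square of a nonnegative convex function), hence the energy is weakly lower semicontinuous, and the $2$-homogeneity makes the Lagrange multiplier equal to the infimum of the Rayleigh quotient; the evenness $F(-\xi)=F(\xi)$ shows $|u_*|$ is again a minimizer, and Proposition \ref{+pro2.1} applied against $v\equiv0$ gives strict positivity; your Picone computation is correct (the cross terms collapse precisely via $F_\xi(\nabla v)\cdot\nabla v=F(\nabla v)$ and the $0$-homogeneity of $F_\xi$), and your verification of strict convexity of $F^2$ correctly combines the radial term $2(F_\xi\cdot V)^2$ with the tangential bound (\ref{1.2}) through the decomposition permitted by Proposition \ref{pro1.1}(4). (Belloni--Ferone--Kawohl actually argue simplicity via the ``hidden convexity'' of the Dirichlet energy along $t\mapsto((1-t)u^2+tv^2)^{1/2}$, which is equivalent in substance to your Picone route.) Two points to tighten: (i) simplicity requires knowing that \emph{every} first eigenfunction, not just the constructed minimizer, has one sign; this follows because testing $-Qu_2=\lambda(\Omega)u_2$ against $u_2$ shows any eigenfunction for $\lambda(\Omega)$ is itself a minimizer of the Rayleigh quotient, so your sign argument applies to it. (ii) With the regularized test function $u_1^2/(u_2+\epsilon)$ no Hopf-type boundary information is needed: since $u_1\in W_0^{1,2}\cap L^\infty$ and $u_2+\epsilon\ge\epsilon$, the test function is admissible, monotone convergence gives $\lambda(\Omega)\int u_2u_1^2/(u_2+\epsilon)\to\lambda(\Omega)$, and Fatou applied to the nonnegative pointwise Picone deficit forces equality a.e., after which strict convexity yields $\nabla(u_1/u_2)=0$ and proportionality on the connected domain $\Omega$.
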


\section{Existence of extremal solution}\label{sec3}
In this section, we will give a detailed proof of our main Theorem \ref{thm1.1}-\ref{thm1.4}. At first, we use the iterative method to prove Theorem \ref{thm1.1}.

\begin{proof}[Proof of Theorem \ref{thm1.1}]
Consider the equation
 \begin{equation}
\left\{
\begin{aligned}\nonumber
-Qw&=1 \indent \mbox{in}\indent B\\
w&=0 \indent \mbox{on}\indent\partial B.\\
\end{aligned}
\right.
\end{equation}
where $B$ is a ball such that $\overline{\Omega}\subset B$. Easy to see that $w\in C^{1,\alpha}(\Omega)$ and $w>0$ on $\partial\Omega$. Set $M=\max_{x\in\overline{\Omega}}w(x)$, let $\lambda_{0}=e^{-M}$ take $0<\lambda<\lambda_{0}$, we have
 \begin{equation}
\left\{
\begin{aligned}\nonumber
-Qw&=1>\lambda e^{w} \indent \mbox{in}\indent \Omega\\
w&>0 \indent \mbox{on}\indent\partial \Omega.\\
\end{aligned}
\right.
\end{equation}
so, $w$ is a supersolution of (\ref{1.3}). Let $u_{1}$ be a solution of
 \begin{equation}
\left\{
\begin{aligned}\nonumber
-Qu_{1}&=\lambda \indent \mbox{in}\indent\Omega\\
u&=0 \indent \mbox{on}\indent\partial\Omega,\\
\end{aligned}
\right.
\end{equation}
by weak comparison principle, we get $0\leq u_{1}\leq w$. Next, take the solutions of
 \begin{equation}
\left\{
\begin{aligned}\nonumber
-Qu_{n}&=\lambda e^{u_{n-1}} \indent \mbox{in}\indent\Omega\\
u_{n}&=0 \indent \mbox{on}\indent\partial\Omega.\\
\end{aligned}
\right.
\end{equation}
We obtain an increasing sequence $\{u_{n}\}$ satisfies $0\leq u_{n}\leq w$,  let $\underline{u}=\lim_{n}u_{n}$, it's clear that $\underline{u}$ is minimal and regular.

Next, we will prove that minimal solution is stable. Define
$$K=\{v\in W_{0}^{1,2}(\Omega)|0\leq v\leq \underline{u} \},$$
since the energy functional
$$E(u)=\frac{1}{2}\int_{\Omega}F^{2}(\nabla u)dx-\lambda\int_{\Omega}e^{u}dx,$$
so, it's clear that there exist $u\in K$ such that
$$\min_{v\in K}E(v)=E(u).$$
We want to claim the minimizer $u\in K$ is $\underline{u}$. By definition of $K$, it suffices to prove that $\underline{u}\leq u$. The minimal solution is obtained as the limit of an increasing sequence $\{u_{n}\}$ of
  \begin{equation}\label{3.1}
\left\{
\begin{aligned}
-Qu_{n}&=\lambda e^{u_{n-1}} \indent \mbox{in}\indent \Omega\\
u_{n}&=0 \indent \mbox{on}\indent\partial \Omega,\\
\end{aligned}
\right.
\end{equation}
with $u_{0}=0$, on other hand $u\geq0=u_{0}$, so, we have $e^{u_{0}}\leq e^{u}$. Moreover, by the definition of the minimizer $u$, we have
$$\langle -Qu, (u-u_{1})^{-}\rangle\geq\lambda\langle e^{u}, (u-u_{1})^{-}\rangle,$$
where $(u-u_{1})^{-}=min\{0, u-u_{1}\}$. Meanwhile, it follows from the equation (\ref{3.1}), we have
$$\langle -Qu_{1}, (u-u_{1})^{-}\rangle=\lambda\langle e^{u_{0}}, (u-u_{1})^{-}\rangle,$$
then
\begin{align}
C\int_{\{u\leq u_{1}\}}|\nabla(u-u_{1})|^{2}dx&\leq\int_{\{u\leq u_{1}\}}\left(F(\nabla u_{1})F_{\xi}(\nabla u_{1})-F(\nabla u)F_{\xi}(\nabla u)\right)\nabla(u_{1}-u)dx\nonumber\\
&=\langle -Qu_{1}+Qu, (u-u_{1})^{-}\rangle\nonumber\\
&\leq\lambda\int_{\{u\leq u_{1}\}}(e^{u_{0}}-e^{u_{1}})(u_{1}-u)dx\leq0,\nonumber
\end{align}
this implies that $u_{1}\leq u$. Similarly, we can obtain that $u_{n}\leq u$, thus we have $u\geq \underline{u}$.

The strong maximum principle tell us that $\underline{u}>0$ in $\Omega$, so for any $0\leq\phi\in C_{c}^{\infty}(\Omega)$, there exist $\varepsilon>0$ such that $0\leq\underline{u}-\varepsilon\phi\in K$, since $\underline{u}$ is minimizer in $K$, we have $E(\underline{u})\leq E(\underline{u}-\varepsilon\phi)$. It follows from that Talyor extension, we have
$$0\leq E(\underline{u}-\varepsilon\phi)-E(\underline{u})=\frac{\varepsilon^{2}}{2}\langle E^{''}(\underline{u})\phi,\phi\rangle+o(\varepsilon^{2}),$$
therefore
$$\langle E^{''}(\underline{u})\phi,\phi\rangle\geq0,$$
by density, we have
$$\langle E^{''}(\underline{u})v,v\rangle\geq0,$$
for all $v\in W_{0}^{1,2}(\Omega)$.
\end{proof}

From the proof of the above theorem, it can be seen that if the supersolution of the equation (\ref{1.3}) is regular, then the minimal solution derived from it is also regular. Actually, if the supersolution is singular, we can also obtain that the minimal solution is regular.

\begin{thm}
Assume that $u_{0}\in W_{0}^{1,2}(\Omega)$, $e^{u_{0}}\in L^{1}(\Omega)$, is a singular solution of
 \begin{equation}
\left\{
\begin{aligned}\nonumber
-Qu_{0}&=\widetilde{\lambda} e^{u_{0}} \indent \mbox{in}\indent \Omega\\
u_{0}&=0 \indent \mbox{on}\indent\partial \Omega.\\
\end{aligned}
\right.
\end{equation}
then, for any $\lambda\in(0, \widetilde{\lambda})$ the problem
 \begin{equation}\label{3.2}
\left\{
\begin{aligned}
-Qu&=\lambda e^{u} \indent \mbox{in}\indent \Omega\\
u&=0 \indent \mbox{on}\indent\partial \Omega.\\
\end{aligned}
\right.
\end{equation}
has a minimal regular solution.
\end{thm}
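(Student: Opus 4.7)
The plan is to exploit $u_0$ as a supersolution and run the monotone scheme of Theorem \ref{thm1.1} between $0$ and a suitable scalar multiple of $u_0$. First, Proposition \ref{+pro2.1} applied to $-Qu_0 = \widetilde\lambda e^{u_0} \ge 0$ with $u_0 = 0$ on $\partial\Omega$ gives $u_0 \ge 0$ in $\Omega$, so $u_0$ is already a supersolution of (\ref{3.2}). The real gain comes from the fact that $Q$ is $1$-homogeneous: from $F(t\xi) = |t|F(\xi)$ together with property (6) of Proposition \ref{pro1.1}, a direct computation gives $Q(tu) = t\,Qu$ for every $t \in \mathbb{R}$. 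Hence for any fixed $t \in (\lambda/\widetilde\lambda, 1)$ the function $\overline w := t u_0$ satisfies
\begin{equation*}
-Q\overline w \;=\; t\widetilde\lambda\, e^{u_0} \;\ge\; t\widetilde\lambda\, e^{t u_0} \;\ge\; \lambda\, e^{\overline w},
\end{equation*}
since $u_0 \ge t u_0 \ge 0$ and $t\widetilde\lambda > \lambda$. Starting the iteration $v_0 = 0$, $-Q v_{n+1} = \lambda e^{v_n}$ with $v_{n+1}|_{\partial\Omega} = 0$, Proposition \ref{+pro2.1} then produces the monotone sandwich $0 \le v_n \le v_{n+1} \le \overline w = t u_0$, whose pointwise limit $\underline u$ is a weak solution of (\ref{3.2}); minimality of $\underline u$ is obtained by the same comparison argument already used in Theorem \ref{thm1.1}.

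The content of the theorem is therefore the regularity of $\underline u$. The decisive point is that $t < 1$ strictly, so the limit lies strictly below the singular profile in the definite sense $\underline u \le t u_0$. The plan is to show $e^{t u_0} \in L^p(\Omega)$ for some $p > N/2$ and then apply Lemma \ref{+lem2.1} to $-Q\underline u = \lambda e^{\underline u}$, whose right-hand side is pointwise dominated by $\lambda e^{t u_0}$, to conclude $\underline u \in L^\infty(\Omega)$. The $L^p$ bound on $e^{t u_0}$ should come from the anisotropic Moser--Trudinger inequality (\ref{+2.1}) in its subcritical regime: the strict gap $t < 1$ gives room to absorb the constants in the exponent. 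In dimension $N = 2$ this is immediate from $u_0 \in W_0^{1,2} = W_0^{1,N}$; for $N \ge 3$ one must first bootstrap the regularity of $u_0$ itself using that $-Qu_0 = \widetilde\lambda e^{u_0} \in L^1(\Omega)$ via Stampacchia-type arguments before feeding $t u_0$ into (\ref{+2.1}).

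The main obstacle is precisely this last step. The assumptions give no quantitative exponential integrability of $u_0$ in high dimension, and the strict subcriticality factor $t < 1$ must be harnessed delicately to pull $e^{t u_0}$ into an $L^p$ space with $p > N/2$ where the $L^\infty$-regularity of Lemma \ref{+lem2.1} is activated. All the other ingredients of the proof---the supersolution construction, the monotone iteration, and the minimality of $\underline u$---are routine extensions of what was already carried out in Theorem \ref{thm1.1}, so essentially all the work is concentrated in quantifying the $e^{u_0} \in L^1 \Longrightarrow e^{t u_0} \in L^p$ step.
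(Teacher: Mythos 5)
Your setup is sound and matches the paper's first step: the $1$-homogeneity $Q(tu)=tQu$ does hold (from $F(t\xi)=|t|F(\xi)$ and property (6) of Proposition \ref{pro1.1}), $tu_{0}$ is a supersolution for $t\in[\lambda/\widetilde{\lambda},1)$, and the monotone iteration plus the comparison argument of Theorem \ref{thm1.1} yields the minimal solution $\underline{u}\leq tu_{0}$. The gap is exactly where you locate it, and the route you sketch does not close it for $N\geq3$. From $e^{u_{0}}\in L^{1}$ and $\underline{u}\leq tu_{0}$ one only gets $e^{\underline{u}}\leq(e^{u_{0}})^{t}\in L^{1/t}$, and since $t$ must exceed $\lambda/\widetilde{\lambda}$, the exponent $1/t<\widetilde{\lambda}/\lambda$ can be arbitrarily close to $1$ when $\lambda$ is close to $\widetilde{\lambda}$; this is far from the $p>N/2$ needed for Lemma \ref{+lem2.1}. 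The Moser--Trudinger inequality (\ref{+2.1}) requires control of $\|F(\nabla u)\|_{L^{N}}$, which $u_{0}\in W_{0}^{1,2}$ provides only when $N=2$, and a Stampacchia bootstrap starting from $-Qu_{0}=\widetilde{\lambda}e^{u_{0}}\in L^{1}$ gives at best $u_{0}\in W^{1,q}$ for $q<N/(N-1)$, which yields no exponential integrability in high dimension. So the decisive implication $e^{u_{0}}\in L^{1}\Rightarrow e^{\underline{u}}\in L^{p}$, $p>N/2$, is asserted but not proved.

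The paper's mechanism, which you are missing, is a \emph{quadratic} gain rather than an interpolation gain. With $t=\lambda/\widetilde{\lambda}$ and $u_{1}=tu_{0}$ (so $e^{u_{1}}\in L^{\widetilde{\lambda}/\lambda}$, hence $u_{1},u_{2}\in L^{r}$ for all $r$), the convexity of the exponential gives $e^{tu_{0}}+(1-t)u_{0}e^{tu_{0}}\leq e^{u_{0}}$. Combining this with the identity $-Q(\tfrac12 u_{2}^{2})=-u_{2}Qu_{2}-F^{2}(\nabla u_{2})\leq\lambda u_{2}e^{u_{1}}$ for the second iterate $u_{2}\leq u_{1}\leq u_{0}$, one obtains $-Q(\tfrac{1-t}{2}u_{2}^{2})\leq\lambda e^{u_{0}}=-Qu_{1}$, and the comparison principle (after a truncation argument showing $\tfrac12 u_{2}^{2}\in W_{0}^{1,2}$) yields $\tfrac{1-t}{2}u_{2}^{2}\leq u_{1}$. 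Thus $e^{\frac{1-t}{2}u_{2}^{2}}\in L^{1}$, which forces $e^{u_{2}}\in L^{q}$ for \emph{every} $q<\infty$ (since $qu_{2}\leq\tfrac{1-t}{2}u_{2}^{2}+C(q)$), and then the third iterate $u_{3}$ is bounded by Lemma \ref{+lem2.1} and serves as a regular supersolution to which Theorem \ref{thm1.1} applies. Without this (or some substitute) step, your argument proves regularity only when $\widetilde{\lambda}/\lambda>N/2$, not for all $\lambda\in(0,\widetilde{\lambda})$.
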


\begin{proof}
The same as the proof of Theorem \ref{thm1.1}, we can also prove the existence of minimal solution by iterative method. I will not repeat it here. In the following, we will prove the minimal solution is regular.

If $N=2$, it follows from Lemma \ref{+lem2.1} and Moser Trudinger inequality (\ref{+2.1}), the conclusion is obvious.

 For $N>2$, since $u_{0}$ is a singular supersolution, consider
 \begin{equation}
\left\{
\begin{aligned}\nonumber
-Qu_{1}&=\lambda e^{u_{0}} \indent \mbox{in}\indent \Omega\\
u_{1}&=0 \indent \mbox{on}\indent\partial \Omega.\\
\end{aligned}
\right.
\end{equation}
we have $u_{1}=\frac{\lambda}{\widetilde{\lambda}}u_{0}$, which satisfies $u_{1}<u_{0}$, we have $e^{u_{1}}\in L^{\frac{\widetilde{\lambda}}{\lambda}}(\Omega)$. In particular $u_{1}\in L^{r}(\Omega)$ for any $r\in(1,\infty)$. Let $u_{2}$ be a solution of
 \begin{equation}
\left\{
\begin{aligned}\nonumber
-Qu_{2}&=\lambda e^{u_{1}} \indent \mbox{in}\indent \Omega\\
u_{2}&=0 \indent \mbox{on}\indent\partial \Omega.\\
\end{aligned}
\right.
\end{equation}
then we find a weak solution $u_{2}\in W_{0}^{1,2}(\Omega)$, by the comparison principle, we have $0<u_{2}\leq u_{1}<u_{0}$. Therefore, $u_{2}\in L^{r}(\Omega)$ for any $r\in(1,\infty)$, to go further $u_{2}e^{u_{1}}\in L^{1}(\Omega)$. Consider the function $f(s)=e^{sx_{0}}$, for $0<t<1$, we have
$$e^{tx_{0}}+(1-t)x_{0}e^{tx_{0}}\leq e^{x_{0}},$$
taking $t=\frac{\lambda}{\widetilde{\lambda}}$ and $x_{0}=u_{0}$, we have
$$e^{u_{1}}+(1-t)u_{0}e^{u_{1}}\leq e^{u_{0}},$$
since $u_{2}\leq u_{1}$, we have
$$e^{u_{1}}+(1-t)u_{0}e^{u_{2}}\leq e^{u_{0}}.$$
since
$$-Q(\frac{1}{2}u_{2}^{2})=-u_{2}Qu_{2}-F^{2}(\nabla u)\leq\lambda u_{2}e^{u_{1}},$$
hence
$$-Q(\frac{1}{2}(1-t)u_{2}^{2})\leq\lambda(1-t) u_{2}e^{u_{1}}\leq \lambda e^{u_{0}}=-Qu_{1}.$$
On the other hand, since $-Q(\frac{1}{2}u_{2}^{2})=-u_{2}Qu_{2}-F^{2}(\nabla u)\in L^{1}(\Omega)$, define
$$ w_{k}=\left\{
\begin{aligned}
\frac{1}{2}u_{2}^{2}, &  & \indent\mbox{if}\indent \frac{1}{2}u_{2}^{2}<k \\
k, & & \indent\mbox{if}\indent \frac{1}{2}u_{2}^{2}\geq k, \\
\end{aligned}
\right.
$$
since, $e^{u_{1}}\in L^{\frac{\widetilde{\lambda}}{\lambda}}$ and $u_{2}\in L^{r}$ for any $r\in(1,\infty)$, we have
$$\int_{\Omega}F^{2}(\nabla w_{k})dx\leq \lambda\int_{\Omega}w_{k}u_{2}e^{u_{1}}dx\leq\frac{\lambda}{2}\int_{\Omega}u_{2}^{3}e^{u_{1}}dx<\infty,$$
let $k\rightarrow\infty$, we have $\frac{1}{2}u_{2}^{2}\in W_{0}^{1,2}(\Omega)$. So by the weak comparison principle, we have
$$\frac{1}{2}(1-t)u_{2}^{2}\leq u_{1},$$
this implies that
 $$e^{\frac{1}{2}(1-t)u_{2}^{2}}\in L^{1}(\Omega), $$
 so we have $e^{u_{2}}\in L^{q}$ for all $q\in(1,\infty)$. Next, consider
  \begin{equation}
\left\{
\begin{aligned}\nonumber
-Qu_{3}&=\lambda e^{u_{2}} \indent \mbox{in}\indent \Omega\\
u_{3}&=0 \indent \mbox{on}\indent\partial \Omega.\\
\end{aligned}
\right.
\end{equation}
we obtain $u_{3}\in L^{\infty}(\Omega)$, this means that $u_{3}$ is a regular supersolution of equation (\ref{3.2}), so, it follows from Theorem \ref{thm1.1}, the conclusion is obvious.
\end{proof}

It can be seen from Theorem \ref{thm1.1} that when $\lambda$ is small enough, the equation (\ref{1.3}) has solutions. In fact, when $N=2$, we can do it better. More precisely, we can use the Mountain Pass lemma to find at least two solutions. Since the energy functional corresponding to (\ref{1.3}) is
$$E(u)=\frac{1}{2}\int_{\Omega}F^{2}(\nabla u)dx-\lambda\int_{\Omega}e^{u}dx.$$

It follows from the Moser-Trudinger type inequality (\ref{+2.1}), by taking $\gamma=1$, we have
$$E(u)=\frac{1}{2}\int_{\Omega}F^{2}(\nabla u)dx-\lambda\int_{\Omega}e^{u}dx\geq\frac{1}{2}\int_{\Omega}F^{2}(\nabla u)dx-\lambda c_{1}|\Omega|e^{c_{2}\parallel F(\nabla u)\parallel^{2}_{L^{2}}},$$
it's easy to see that, there exist a constant $\lambda_{0}$ such that if $0<\lambda<\lambda_{0}$, the function
$$g(x)=\frac{1}{2}x^{2}-\lambda k_{1}|\Omega|e^{k_{2}\alpha^{2}x^{2}}$$
attain its maximum at $x_{0}\in(0,\infty)$ and $g(x_{0})>g(0)$. Therefore, there exist a point $u_{0}$ such that $E(u_{0})>E(0)$. Easy to check that $E(u)\rightarrow-\infty$ when $\parallel u\parallel_{W_{0}^{1,2}(\Omega)}$ large enough. That is to say, there is a mountain pass structure, the Mountain Pass lemma implies that there exist a Palais-Smale sequence $\{u_{j}\}$, thus, we only need to check $E$ verifies the  Palais-Smale condition.

\begin{lem}
The functional $E$ verifies the Palais-Smale condition.
\end{lem}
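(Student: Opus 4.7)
The plan is to establish the Palais--Smale condition in three standard stages: first bound any Palais--Smale sequence $\{u_j\}\subset W_0^{1,2}(\Omega)$, then extract a weak limit, and finally upgrade weak to strong convergence.

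For boundedness, I would write the two defining Palais--Smale relations as
\begin{align*}
\tfrac{1}{2}\int_\Omega F^2(\nabla u_j)\,dx - \lambda\int_\Omega e^{u_j}\,dx &= c + o(1),\\
\int_\Omega F^2(\nabla u_j)\,dx - \lambda\int_\Omega u_j e^{u_j}\,dx &= o(\|u_j\|_{W_0^{1,2}}),
\end{align*}
the second coming from $\langle E'(u_j),u_j\rangle = o(\|u_j\|)$, and form the combination $4\cdot(\text{first})-(\text{second})$ to obtain
$$\lambda\int_\Omega (4-u_j)e^{u_j}\,dx = \int_\Omega F^2(\nabla u_j)\,dx - 4c + o(1) + o(\|u_j\|_{W_0^{1,2}}).$$
The scalar function $t\mapsto(4-t)e^t$ attains its global maximum $e^3$ at $t=3$, so the left-hand side is bounded above by $\lambda e^3|\Omega|$. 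Combined with $F(\xi)\geq a|\xi|$, this yields $\|u_j\|_{W_0^{1,2}}^2\leq C+o(\|u_j\|_{W_0^{1,2}})$, forcing $\{u_j\}$ to be bounded.

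Up to a subsequence $u_j\rightharpoonup u$ weakly in $W_0^{1,2}(\Omega)$, strongly in $L^p(\Omega)$ for every $p<\infty$ (since $N=2$), and a.e.~in $\Omega$. By the anisotropic Moser--Trudinger inequality \eqref{+2.1} applied with any $\gamma>0$, the bound on $\|F(\nabla u_j)\|_{L^2}$ implies that $e^{u_j}$ is uniformly bounded in every $L^p(\Omega)$. H\"older's inequality then gives $\int_\Omega e^{u_j}(u_j-u)\,dx\to 0$, so testing $\langle E'(u_j),\cdot\rangle = o(1)$ against $u_j-u$ yields
$$\int_\Omega F(\nabla u_j)F_\xi(\nabla u_j)\cdot\nabla(u_j-u)\,dx \to 0.$$
Weak convergence together with $F(\nabla u)F_\xi(\nabla u)\in L^2(\Omega)$ gives the analogous statement with the prefactor replaced by $F(\nabla u)F_\xi(\nabla u)$; subtracting and invoking the strong monotonicity bound $\bigl(F(\xi)F_\xi(\xi)-F(\eta)F_\xi(\eta)\bigr)\cdot(\xi-\eta)\geq C|\xi-\eta|^2$ (already used in the proof of Theorem \ref{thm1.1}) forces $\nabla u_j\to\nabla u$ in $L^2(\Omega)$, hence $u_j\to u$ strongly in $W_0^{1,2}(\Omega)$.

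The main obstacle is the boundedness step: since $f(u)=\lambda e^u$ fails the classical Ambrosetti--Rabinowitz condition as $u\to-\infty$, one cannot simply subtract a fixed multiple of $\langle E'(u_j),u_j\rangle$ from $E(u_j)$ to obtain coercivity. The fix is the algebraic identity above combined with the elementary observation that $(4-t)e^t$ is globally bounded above on $\mathbb{R}$. Once boundedness is in hand, the Moser--Trudinger inequality converts $L^2$ control of $F(\nabla u_j)$ into arbitrary $L^p$ control of $e^{u_j}$ for free, after which the uniform ellipticity of $Q$ makes the passage from weak to strong convergence routine.
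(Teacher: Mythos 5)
Your proof is correct, and the crucial step---boundedness of the Palais--Smale sequence---is the same device the paper uses: you form $4E(u_j)-\langle E'(u_j),u_j\rangle$ and exploit that $(4-t)e^t$ is globally bounded above, while the paper writes $E(u_j)-\tfrac14\langle E'(u_j),u_j\rangle$ and uses that $e^t(\tfrac{t}{4}-1)$ is globally bounded below; these are the same observation up to a factor of $4$, and both circumvent the failure of the Ambrosetti--Rabinowitz condition as $t\to-\infty$. Where you diverge is the passage from weak to strong convergence. The paper estimates $\int_\Omega(e^{u_j}-e^u)\psi\,dx$ by a four-factor H\"older inequality combined with the Moser--Trudinger bound \eqref{+2.1} to conclude $e^{u_j}\to e^u$ in $W^{-1,2}(\Omega)$, and then invokes continuity of $(-Q)^{-1}\colon W^{-1,2}(\Omega)\to W_0^{1,2}(\Omega)$ without proof; that continuity itself rests on the strong monotonicity of $\xi\mapsto F(\xi)F_\xi(\xi)$. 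You instead test $E'(u_j)$ directly against $u_j-u$ and apply the monotonicity inequality $\bigl(F(\xi)F_\xi(\xi)-F(\eta)F_\xi(\eta)\bigr)\cdot(\xi-\eta)\geq C|\xi-\eta|^2$ (the same one the paper uses in the proof of Theorem \ref{thm1.1}), which is the standard $(S_+)$-property argument. Your route is more self-contained, since it makes explicit the monotonicity input that the paper hides inside the unproved continuity of $(-Q)^{-1}$; the paper's route yields the slightly stronger byproduct $e^{u_j}\to e^u$ in $W^{-1,2}(\Omega)$, which is not needed for the lemma itself. Both arguments are sound.
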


\begin{proof}
Let $\{u_{j}\}\subset W_{0}^{1,2}(\Omega)$ be a Palais-Smale sequence for $E$, i.e.
$$E(u_{j})\rightarrow c$$
and
$$E'(u_{j})\rightarrow0.$$

Writing $E'(u_{j})=\varepsilon_{j}$, where $\varepsilon_{j}\rightarrow0$ in $W^{-1, 2}(\Omega)$, we have
\begin{align}
c&=\lim_{j}\left\{E(u_{j})-\frac{1}{4}\langle\varepsilon_{j}, u_{j}\rangle+\frac{1}{4}\langle\varepsilon_{j}, u_{j}\rangle\right\}\nonumber\\
&\geq\lim_{j}\left\{\frac{1}{4}\int_{\Omega}|\nabla u_{j}|^{2}dx+\lambda\int_{\Omega}e^{u_{j}}(\frac{u_{j}}{4}-1)dx-\frac{1}{4}\left(\int_{\Omega}|\nabla u_{j}|^{2}dx\right)^{\frac{1}{2}}\right\}\nonumber\\
&\geq\lim_{j}\left\{\frac{1}{4}\int_{\Omega}|\nabla u_{j}|^{2}dx-\lambda C_{0}|\Omega|-\frac{1}{4}\left(\int_{\Omega}|\nabla u_{j}|^{2}dx\right)^{\frac{1}{2}}\right\},\nonumber
\end{align}
where $-C_{0}=\min_{x\in(0,\infty)}\{e^{x}(\frac{x}{4}-1)\}$. Here we obtain that $\{u_{j}\}$ is bounded in $W_{0}^{1,2}(\Omega)$. So there exist a subsequence, we still note by $j$, such that
$$u_{j}\rightarrow u\indent\mbox{weakly in}\indent W_{0}^{1,2}(\Omega)$$
and
$$u_{j}\rightarrow u\indent\mbox{strongly in}\indent L^{q}(\Omega),$$
for any $q\in(1,\infty)$. For any $\psi\in W_{0}^{1,2}(\Omega)$ with $\parallel\psi\parallel_{W_{0}^{1,2}(\Omega)}=1$, hence, by H\"{o}lder inequality, we have
\begin{align}
\left|\int_{\Omega}(e^{u_{j}}-e^{u})\psi dx\right|&\leq\int_{\Omega}e^{u}|e^{u_{j}-u}-1||\psi| dx\leq\int_{\Omega}e^{u}|u_{j}-u|e^{|u_{j}-u|}|\psi| dx\nonumber\\
&\leq\left(\int_{\Omega}e^{au}dx\right)^{\frac{1}{a}}\left(\int_{\Omega}e^{b|u_{j}-u|}dx\right)^{\frac{1}{b}}\left(\int_{\Omega}|u_{j}-u|^{c}dx\right)^{\frac{1}{c}}\left(\int_{\Omega}|\psi|^{d}dx\right)^{\frac{1}{d}}\rightarrow0,\nonumber
\end{align}
where $\frac{1}{a}+\frac{1}{b}+\frac{1}{c}+\frac{1}{d}=1$. Therefore, we have
$$\lim_{j}\left\{\sup_{\parallel\psi\parallel_{W_{0}^{1,2}(\Omega)}=1}\int_{\Omega}(e^{u_{j}}-e^{u})\psi dx\right\}=0,$$
this implies that $e^{u_{j}}\rightarrow e^{u}$ in $W^{-1,2}(\Omega)$. Since $E'(u_{j})\rightarrow0$, so for any $\psi\in W_{0}^{1,2}(\Omega)$, we have
$$E'(u_{j})[\psi]=\int_{\Omega}F(\nabla u_{j})F_{\xi}(\nabla u_{j})\nabla\psi-\lambda e^{u_{j}}\psi dx\rightarrow0,$$
this implies that
$$-Qu_{j}\rightarrow\lambda e^{u} \indent\mbox{in}\indent W^{-1,2}(\Omega),$$
by the continuity of the operator $(-Q)^{-1}:  W^{-1,2}(\Omega)\rightarrow W^{1,2}_{0}(\Omega)$, we obtain the result.
\end{proof}

It follows from the Mountain Pass lemma, the following result is obvious.
\begin{thm}
There exist a constant $\lambda_{0}$ such that if $0<\lambda<\lambda_{0}$, problem () admits a solution corresponding to a critical point of the functional $E$ with critical value
$$c=\inf_{\phi\in \mathfrak{C}}\max_{t\in[0,1]}E(\phi(t)),$$
where $\mathfrak{C}=\{\phi\in C([0,1], W_{0}^{1,2}(\Omega))|\phi(0)=0, \phi(1)=w_{0}\}$ for some $w_{0}\in W_{0}^{1,2}(\Omega)$ such that $E(w_{0})\leq E(0)$. Moreover $c>E(0)=-\lambda|\Omega|$.
\end{thm}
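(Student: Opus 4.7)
The plan is to apply the Mountain Pass lemma (Proposition \ref{+pro2.4}) directly, combining the mountain-pass geometry set up in the paragraph preceding the Palais-Smale lemma with the compactness condition just verified.

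First, I would record $E(0) = -\lambda|\Omega|$. For the inner barrier on a sphere, the anisotropic Moser-Trudinger inequality (\ref{+2.1}) with $\gamma = 1$ already yields $E(u) \geq \frac{1}{2}\|F(\nabla u)\|_{L^2}^2 - \lambda c_1|\Omega|\exp(c_2\|F(\nabla u)\|_{L^2}^2)$, and the scalar function on the right-hand side attains, for $\lambda$ below some threshold $\lambda_0$, a strict positive maximum $a > 0$ at a radius $x_0 > 0$. Since $a|\xi| \leq F(\xi) \leq b|\xi|$, this translates to a sphere $S_R \subset W_0^{1,2}(\Omega)$ of suitable radius $R$, on which $E(u) \geq a > E(0)$.

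Next, to exhibit a point past the mountain, I would take any nonzero $v \in C_c^\infty(\Omega)$ with $v \geq 0$, and observe that $E(tv) = \frac{t^2}{2}\int_\Omega F^2(\nabla v)\,dx - \lambda\int_\Omega e^{tv}\,dx \to -\infty$ as $t \to +\infty$, since the exponential integral dominates the quadratic gradient term. Fixing $t_0$ large enough, set $w_0 = t_0 v$ so that $\|w_0\|_{W_0^{1,2}(\Omega)} > R$ and $E(w_0) \leq E(0)$; this is the function whose existence is asserted in the statement.

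Finally, I would invoke Proposition \ref{+pro2.4} with $X = W_0^{1,2}(\Omega)$, $I = E$, $e = w_0$, and the radius $R$ above; the admissible path class is exactly $\mathfrak{C}$. The preceding lemma gives the Palais-Smale compactness condition, so the min-max level $c = \inf_{\phi \in \mathfrak{C}}\max_{t\in[0,1]}E(\phi(t))$ is a critical value of $E$ with $c \geq a > E(0) = -\lambda|\Omega|$, yielding the desired weak solution of (\ref{1.3}). The main obstacle is the bookkeeping for the threshold $\lambda_0$: it must be chosen small enough so that the scalar function $g(x) = \frac{1}{2}x^2 - \lambda c_1|\Omega|e^{c_2 x^2}$ has a strict positive maximum exceeding $E(0) = -\lambda|\Omega|$. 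This is elementary calculus on $g$ but it requires shrinking $\lambda_0$ simultaneously to make $g(x_0)$ positive and to dominate the base level $-\lambda|\Omega|$, after which all other steps are essentially automatic given the preliminaries already established.
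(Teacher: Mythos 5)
Your proposal is correct and follows essentially the same route as the paper, which simply declares the theorem an immediate consequence of the Mountain Pass lemma after establishing the barrier via the Moser--Trudinger bound and the Palais--Smale condition in the preceding lemma. Your write-up merely makes explicit the details the paper leaves implicit (the choice of $w_0 = t_0 v$ with $E(tv)\to-\infty$, and the requirement that $\lambda_0$ be small enough that the maximum of $g$ is positive, hence exceeds $E(0)=-\lambda|\Omega|$).
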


So, there exist $R_{2}>R_{1}>0$ such that $E(u_{1})=E(u_{2})\geq E(0)$ with $\parallel u_{1}\parallel_{W_{0}^{1,2}(\Omega)}=R_{1}$ and $\parallel u_{2}\parallel_{W_{0}^{1,2}(\Omega)}=R_{2}$. We consider the cut-off function $\varphi\in C_{c}^{\infty}(\mathbb{R})$ which satisfies $\varphi(x)=1$ for $x\leq R_{1}$, $\varphi(x)=0$ if $x\geq R_{2}$ and $\varphi(x)$ is nonincreasing. We define the functional
$$J(u)=\frac{1}{2}\int_{\Omega}F^{2}(\nabla u)dx-\lambda\int_{\Omega}\varphi(\parallel u\parallel_{W_{0}^{1,2}(\Omega)})e^{u}dx.$$

\begin{lem}
$\inf_{u\in W_{0}^{1,2}(\Omega)}J(u)<-\lambda|\Omega|.$
\end{lem}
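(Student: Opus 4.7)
The plan is to exploit the elementary observation that $J(0)=-\lambda|\Omega|$ (since $\varphi(0)=1$), so the claim reduces to exhibiting a single point $u\in W_{0}^{1,2}(\Omega)$ where $J(u)<J(0)$. The natural idea is to perturb $0$ along a small positive multiple of a fixed positive function, chosen small enough that we remain inside the region $\{\|u\|_{W_{0}^{1,2}}\le R_{1}\}$ on which the cut-off $\varphi$ equals $1$; there $J$ coincides with the genuine energy $E$ and is easy to control.

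First I would pick $\phi\in W_{0}^{1,2}(\Omega)$ with $\phi>0$ in $\Omega$, for instance the first eigenfunction of $-Q$ supplied by Proposition~\ref{+pro2.5} (normalized by $\|\phi\|_{W_{0}^{1,2}(\Omega)}=1$, and replaced by $-\phi$ if needed to secure positivity). Then for every $t\in(0,R_{1})$ one has $\|t\phi\|_{W_{0}^{1,2}}=t<R_{1}$, so $\varphi(\|t\phi\|_{W_{0}^{1,2}})=1$ and
\[
J(t\phi)=\tfrac{t^{2}}{2}\int_{\Omega}F^{2}(\nabla\phi)\,dx-\lambda\int_{\Omega}e^{t\phi}\,dx.
\]

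Next I would apply the convexity bound $e^{s}\ge 1+s$ (valid for all $s\in\mathbb{R}$) pointwise to $s=t\phi$, which gives
\[
J(t\phi)\le \tfrac{t^{2}}{2}\int_{\Omega}F^{2}(\nabla\phi)\,dx-\lambda|\Omega|-\lambda t\int_{\Omega}\phi\,dx.
\]
Since $\int_{\Omega}\phi\,dx>0$, the $O(t)$ term is strictly negative and dominates the $O(t^{2})$ Dirichlet contribution for $t$ small; choosing $t\in(0,R_{1})$ such that in addition $\tfrac{t}{2}\int_{\Omega}F^{2}(\nabla\phi)\,dx<\lambda\int_{\Omega}\phi\,dx$ yields $J(t\phi)<-\lambda|\Omega|$, whence $\inf_{u\in W_{0}^{1,2}(\Omega)}J(u)\le J(t\phi)<-\lambda|\Omega|$.

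There is no genuine obstacle; the only point to verify is the compatibility of the two smallness requirements on $t$ (namely $t<R_{1}$, which keeps $\varphi=1$, and an upper bound forcing the linear term to dominate the quadratic one), and both hold on some nonempty right neighborhood of $0$. A minor simplification would be to replace the eigenfunction by any fixed nonnegative, non-trivial $\phi\in C_{c}^{\infty}(\Omega)$; the eigenfunction is singled out only because its strict positivity inside $\Omega$ is already recorded in Proposition~\ref{+pro2.5}.
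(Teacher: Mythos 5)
Your proposal is correct and is essentially the paper's own argument: take a nonnegative, normalized $v_{0}\in W_{0}^{1,2}(\Omega)$, evaluate $J(\rho v_{0})$ for $\rho\le R_{1}$ (so that $\varphi\equiv1$ and $J=E$ there), bound $e^{\rho v_{0}}\ge 1+\rho v_{0}$, and let the linear term $-\lambda\rho\int_{\Omega}v_{0}\,dx$ dominate the quadratic Dirichlet term for small $\rho$. Your remark that any nonnegative nontrivial test function suffices (the eigenfunction's strict positivity is not needed) matches the paper, which simply takes $v_{0}\ge0$.
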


\begin{proof}
Take $v_{0}\in W_{0}^{1,2}(\Omega)$, $v_{0}\geq0$ and $\parallel v_{0}\parallel_{W_{0}^{1,2}(\Omega)}=1$. For $\rho\leq R_{1}$, we have
\begin{align}
J(\rho v_{0})&=\frac{1}{2}\rho^{2}-\lambda\int_{\Omega}e^{\rho v_{0}}dx\leq \frac{1}{2}\rho^{2}-\lambda|\Omega|-\lambda\rho\int_{\Omega}v_{0}dx\nonumber\\
&=\rho\left(\frac{1}{2}\rho-\lambda\int_{\Omega}v_{0}dx\right)-\lambda|\Omega|,\nonumber
\end{align}
choose $\rho$ small enough, we obtain the result.
\end{proof}
Therefore, for $\parallel u\parallel_{W_{0}^{1,2}(\Omega)}\leq R_{1}$, there exist a Palais-Smale sequence, meanwhile, in this domain $J(u)=E(u)$, thus the functional $J$ also satisfies the Palais-Smale condition.

\begin{thm}
There exist a constant $\lambda_{0}>0$ such that the functional $E$ has a critical point  with critical value $c'<-\lambda|\Omega|$, if $\lambda\in(0,\lambda_{0})$.
\end{thm}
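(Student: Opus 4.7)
The plan is to realize the claimed critical point as a global minimizer of $E$ restricted to the closed ball $\mathcal{B}:=\{u\in W_{0}^{1,2}(\Omega):\|u\|_{W_{0}^{1,2}(\Omega)}\le R_1\}$, which is precisely the region on which $J$ and $E$ coincide. Two preliminary facts organize the minimization. First, the previous lemma's bound $\inf_{W_{0}^{1,2}(\Omega)} J<-\lambda|\Omega|$ is actually witnessed by a point of the form $\rho v_0$ with $\rho\le R_1$; hence $\inf_{\mathcal{B}} E=\inf_{\mathcal{B}} J<-\lambda|\Omega|$. Second, the mountain pass inequality stated just before the definition of $J$ gives $E(u)\ge E(0)=-\lambda|\Omega|$ whenever $\|u\|_{W_{0}^{1,2}(\Omega)}=R_1$. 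Consequently the infimum of $E$ on $\mathcal{B}$ is strictly smaller than its infimum on the boundary sphere, so any minimizer must lie in the open ball.

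First I would establish existence of a minimizer by the direct method. Given a minimizing sequence $\{u_j\}\subset\mathcal{B}$, its boundedness in $W_{0}^{1,2}(\Omega)$ yields a subsequential weak limit $u_0$ with $\|u_0\|_{W_{0}^{1,2}(\Omega)}\le R_1$. The compact Sobolev embedding $W_{0}^{1,2}(\Omega)\hookrightarrow L^q(\Omega)$ for every $q\in[1,\infty)$ produces $u_j\to u_0$ strongly in every $L^q$ and, along a further subsequence, a.e.\ on $\Omega$. The Moser--Trudinger inequality \eqref{+2.1} bounds $\{e^{p u_j}\}$ uniformly in $L^{1}(\Omega)$ for every $p\in(1,\infty)$, so $\{e^{u_j}\}$ is equi-integrable, and Vitali's convergence theorem gives $\int_\Omega e^{u_j}\,dx\to\int_\Omega e^{u_0}\,dx$. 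Combined with the weak lower semi-continuity of $u\mapsto\int_\Omega F^{2}(\nabla u)\,dx$ (the integrand being convex in $\nabla u$ thanks to \eqref{1.2}), this yields $E(u_0)\le\liminf_j E(u_j)=\inf_{\mathcal{B}} E$, so $u_0$ is a minimizer.

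Next I would locate $u_0$ strictly inside the ball. Since $E(u_0)\le E(\rho v_0)<-\lambda|\Omega|$ while $E\ge-\lambda|\Omega|$ on $\partial\mathcal{B}$, necessarily $\|u_0\|_{W_{0}^{1,2}(\Omega)}<R_1$. As an interior minimizer, $u_0$ admits perturbations $u_0+t\phi$ inside $\mathcal{B}$ for every $\phi\in W_{0}^{1,2}(\Omega)$ and sufficiently small $|t|$; differentiating $E(u_0+t\phi)\ge E(u_0)$ in $t$ at $t=0$ gives $\langle E'(u_0),\phi\rangle=0$. Hence $u_0$ is a critical point of $E$ on the whole space, with critical value $c':=E(u_0)<-\lambda|\Omega|$, which is exactly the assertion.

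The principal analytic hurdle is the convergence $\int_\Omega e^{u_j}\,dx\to\int_\Omega e^{u_0}\,dx$ in the direct method; it depends essentially on the $N=2$ form of the Moser--Trudinger inequality \eqref{+2.1} to upgrade boundedness of $\{u_j\}$ in $W_{0}^{1,2}(\Omega)$ into equi-integrability of $\{e^{u_j}\}$. This is precisely the reason why the discussion of Mountain-Pass solutions in this subsection is confined to the two-dimensional case; the constraint of the minimization to $\mathcal{B}$ and the mountain pass barrier together are what keep the minimizer a genuine critical point of $E$ rather than merely one of the cut-off functional $J$.
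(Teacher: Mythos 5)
Your argument is correct, and it rests on exactly the same two geometric facts as the paper's intended proof, namely that $E=J$ drops below $-\lambda|\Omega|$ at some point $\rho v_{0}$ with $\rho\le R_{1}$, and that $E\ge E(0)=-\lambda|\Omega|$ on the sphere $\|u\|_{W_{0}^{1,2}(\Omega)}=R_{1}$; but the existence mechanism you use is genuinely different. The paper follows the Garcia Azorero--Peral route: it works with the truncated functional $J$, which is bounded below and coercive, observes that minimizing sequences are eventually trapped in the region $\|u\|_{W_{0}^{1,2}(\Omega)}\le R_{1}$ where $J=E$ so that $J$ inherits the Palais--Smale condition, and then attains the infimum via Ekeland's variational principle; since $\inf J<-\lambda|\Omega|$ while $J\ge-\lambda|\Omega|$ outside the open ball, the minimizer lands where $J$ and $E$ coincide. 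You instead minimize $E$ directly on the closed ball by the direct method, getting compactness of the nonlinear term from the Moser--Trudinger bound \eqref{+2.1} (equi-integrability of $e^{u_{j}}$) together with Vitali, and weak lower semicontinuity of the gradient term from convexity of $F^{2}$. Your route is more self-contained for this particular statement, since it does not invoke the Palais--Smale condition at all, whereas the paper's route reuses the machinery it has already built for the mountain-pass level. Two small points you should make explicit: concluding $\langle E'(u_{0}),\phi\rangle=0$ from interior minimality requires $E$ to be Gateaux differentiable, which again is supplied by Moser--Trudinger in dimension two; and the barrier bound on the sphere of radius $R_{1}$ is stated rather loosely in the paper (only for particular points $u_{1}$, $u_{2}$), so you are right to read it, as the mountain-pass geometry in fact gives, as a uniform lower bound $E\ge E(0)$ there.
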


Next, we prove the nonexistence result Theorem \ref{thm1.2}.

\begin{proof}[Proof of Theorem \ref{thm1.2}]
Since $\lambda>\lambda_{1}$, so there exist $\delta>0$ small enough such that $\lambda\geq\lambda_{1}+\delta$. Let $\lambda_{\delta}=\lambda_{1}+\delta$ and $v_{1}$ be a positive eigenfunction associate with $\lambda_{1}$ with $\parallel v_{1}\parallel_{L^{\infty}}\leq1$. Suppose $u\in W_{0}^{1,2}(\Omega)$ be a weak solution of equation (\ref{1.3}), hence, we have
$$-Qv_{1}\leq\lambda_{\delta}v_{1}\leq\lambda_{\delta}\leq\lambda e^{u}=-Qu,$$
by the weak comparision principle, we have $v_{1}\leq u$. Let $v_{2}$ be the solution of
\begin{equation}
\left\{
\begin{aligned}\nonumber
-Qv_{2}&=\lambda_{\delta} v_{1} \indent \mbox{in}\indent\Omega\\
v_{2}&=0 \indent \mbox{on}\indent\partial\Omega,\\
\end{aligned}
\right.
\end{equation}
easy to check $v_{2}\in C^{1,\alpha}(\Omega)$, moreover
$$-Qv_{2}=\lambda_{\delta} v_{1}\leq\lambda e^{u}=-Qu$$
and
$$-Qv_{1}\leq\lambda_{\delta}v_{1}=-Qv_{2},$$
by the weak comparision principle, we have
$$v_{1}\leq v_{2}\leq u.$$
By the similar way, we can obtain a increasing sequence $\{v_{n}\}$ which has a upper bounded $u$, hence, passing to the limit, we get a function $v_{0}\in W_{0}^{1,2}(\Omega)$ which solves
\begin{equation}
\left\{
\begin{aligned}\nonumber
-Qv_{0}&=\lambda_{\delta} v_{0} \indent \mbox{in}\indent\Omega\\
v_{0}&=0 \indent \mbox{on}\indent\partial\Omega,\\
\end{aligned}
\right.
\end{equation}
This is impossible for $\delta$ small enough, because Proposition \ref{+pro2.5} has proved that the first eigenvalue is simple.
\end{proof}

\begin{proof}[Proof of Theorem \ref{thm1.3}]
Since $\underline{u}_{n}$ is the minimal solution, by the properties of $F$, we have
\begin{align}
\lambda_{n}\int_{\Omega}e^{\underline{u}_{n}}\underline{u}_{n}^{2}dx&\leq\int_{\Omega}F_{\xi_{i}}(\nabla\underline{u}_{n})F_{\xi_{j}}(\nabla\underline{u}_{n})\underline{u}_{nx_{i}}\underline{u}_{nx_{j}}+F(\nabla\underline{u}_{n})F_{\xi_{i}\xi_{j}}(\nabla\underline{u}_{n})\underline{u}_{nx_{i}}\underline{u}_{nx_{j}}dx\nonumber\\
&=\int_{\Omega}F^{2}(\nabla\underline{u}_{n})dx=\lambda_{n}\int_{\Omega}e^{\underline{u}_{n}}\underline{u}_{n}dx.\nonumber
\end{align}
Consider $T_{n}=\{x\in\Omega|\underline{u}_{n}>2\}$, then, in $\Omega\setminus T_{n}$, we have $0\leq \underline{u}_{n}\leq2$, it follows that
\begin{align}
\int_{\Omega}e^{\underline{u}_{n}}\underline{u}_{n}^{2}dx&\leq\int_{\Omega}e^{\underline{u}_{n}}\underline{u}_{n}dx=\int_{\Omega\setminus T_{n}}e^{\underline{u}_{n}}\underline{u}_{n}dx+\int_{T_{n}}e^{\underline{u}_{n}}\underline{u}_{n}dx\nonumber\\
&\leq2e^{2}|\Omega|+\frac{1}{2}\int_{\Omega}e^{\underline{u}_{n}}\underline{u}_{n}^{2}dx,\nonumber
\end{align}
hence, we have
$$\int_{\Omega}e^{\underline{u}_{n}}\underline{u}_{n}^{2}dx\leq4e^{2}|\Omega|.$$
Since
\begin{align}
\int_{\Omega}e^{\underline{u}_{n}}\underline{u}_{n}dx&=\int_{\Omega\setminus T_{n}}e^{\underline{u}_{n}}\underline{u}_{n}dx+\int_{T_{n}}e^{\underline{u}_{n}}\underline{u}_{n}dx\nonumber\\
&\leq2e^{2}|\Omega|+\frac{1}{2}\int_{\Omega}e^{\underline{u}_{n}}\underline{u}_{n}^{2}dx\nonumber\\
&\leq4e^{2}|\Omega|,\nonumber
\end{align}
and
$$\int_{\Omega}F^{2}(\nabla\underline{u}_{n})dx=\lambda_{n}\int_{\Omega}e^{\underline{u}_{n}}\underline{u}_{n}dx\leq\lambda^{*}4e^{2}|\Omega|,$$
therefore, for a subsequence, we still note by $\{\underline{u}_{n}\}$, we have
$$\underline{u}_{n}\rightarrow u^{*}\indent\mbox{weakly in}\indent W_{0}^{1,2}(\Omega),$$
it follows from the Monotone convergence that
$$e^{\underline{u}_{n}}\rightarrow e^{u^{*}}\indent\mbox{strongly in} \indent L^{1}(\Omega).$$
Since, for any $\phi\in W_{0}^{1,2}(\Omega)$ and $\psi\in W_{0}^{1,2}(\Omega)$, we have
$$\int_{\Omega}F(\nabla \underline{u}_{n})F_{\xi}(\nabla \underline{u}_{n})\nabla\phi dx=\lambda_{n}\int_{\Omega}e^{\underline{u}_{n}}\phi dx,$$
and
$$\int_{\Omega}F_{\xi_{i}}(\nabla \underline{u})F_{\xi_{j}}(\nabla \underline{u})\psi_{x_{i}}\psi_{x_{j}}+F(\nabla \underline{u})F_{\xi_{i}\xi_{j}}(\nabla \underline{u})\psi_{x_{i}}\psi_{x_{j}}dx\geq\lambda_{n}\int_{\Omega}e^{\underline{u}}\psi^{2}dx,$$
for $\alpha\in(0,2)$, take $\phi=\frac{1}{2\alpha}(e^{2\alpha\underline{u}_{n}}-1)$ and $\psi=e^{\alpha\underline{u}_{n}}-1$, we have
$$\lambda_{n}\int_{\Omega}e^{\underline{u}_{n}}\frac{1}{2\alpha}(e^{2\alpha\underline{u}_{n}}-1) dx=\int_{\Omega}F(\nabla \underline{u}_{n})F_{\xi}(\nabla \underline{u}_{n})\nabla\phi dx=\int_{\Omega}F^{2}(\nabla \underline{u}_{n})e^{2\alpha \underline{u}_{n}}dx,$$
and
\begin{align}
\lambda_{n}\int_{\Omega}e^{\underline{u}_{n}}(e^{\alpha\underline{u}_{n}}-1)^{2}dx&\leq\int_{\Omega}F_{\xi_{i}}(\nabla\underline{u}_{n})F_{\xi_{j}}(\nabla\underline{u}_{n})\alpha^{2}e^{2\alpha\underline{u}_{n}}\underline{u}_{nx_{i}}\underline{u}_{nx_{j}}dx\nonumber\\
&\indent+F(\nabla\underline{u}_{n})F_{\xi_{i}\xi_{j}}(\nabla\underline{u}_{n})\alpha^{2}e^{2\alpha\underline{u}_{n}}\underline{u}_{nx_{i}}\underline{u}_{nx_{j}}dx\nonumber\\
&=\int_{\Omega}F^{2}(\nabla\underline{u}_{n})dx=\lambda_{n}\int_{\Omega}e^{\underline{u}_{n}}\underline{u}_{n}dx.\nonumber
\end{align}
It follows that
$$\frac{1}{\alpha}\int_{\Omega}e^{\underline{u}_{n}}(e^{\alpha\underline{u}_{n}}-1)^{2}dx\leq\frac{1}{2}\int_{\Omega}e^{\underline{u}_{n}}(e^{2\alpha\underline{u}_{n}}-1) dx,$$
since $\alpha<2$, by Young's inequality, we have
$$C(\alpha)\int_{\Omega}e^{(2\alpha+1)\underline{u}_{n}}dx\leq\int_{\Omega}e^{\underline{u}_{n}}dx\leq\int_{\Omega}e^{u^{*}}dx.$$
Since, we can choose $\alpha<2$ such that $2\alpha+1\geq\frac{2^{*}}{2^{*}-1}$, moreover $L^{\frac{2^{*}}{2^{*}-1}}(\Omega)\subset W^{-1,2}(\Omega)$, hence, we have
$$e^{\underline{u}_{n}}\rightarrow e^{u^{*}}\indent\mbox{in}\indent W^{-1,2}(\Omega).$$
By the continuity of $(-Q)^{-1}$, we have
$$\underline{u}_{n}\rightarrow u^{*}\indent\mbox{strongly in }\indent W_{0}^{1,2}(\Omega),$$
hence, for any $\phi\in W^{-1,2}(\Omega)$, we have
\begin{align}
\int_{\Omega}F(\nabla u^{*})F_{\xi}(\nabla u^{*})\nabla\phi dx&=\lim_{n}\int_{\Omega}F(\nabla \underline{u}_{n})F_{\xi}(\nabla \underline{u}_{n})\nabla\phi dx\nonumber\\
&=\lim_{n}\int_{\Omega}e^{\underline{u}_{n}}\phi dx=\int_{\Omega}e^{u^{*}}\phi dx.\nonumber
\end{align}

\end{proof}

At the end of this section, we give the proof of the regularity result of extremal solution.

\begin{proof}
Similar to the above argument, we have
$$\frac{1}{\alpha}\int_{\Omega}e^{u}(e^{\alpha u}-1)^{2}dx\leq\frac{1}{2}\int_{\Omega}e^{u}(e^{2\alpha u}-1) dx,$$
for some parameter $\alpha$. If $\alpha<2$, by Young's inequality, we have
$$\int_{\Omega}e^{(2\alpha+1)u}dx\leq C,$$
if $2\alpha+1>\frac{N}{2}$ i.e. $\alpha>\frac{N-2}{4}$, we have $N<10$, then Lemma \ref{+lem2.1} implies that $u\in L^{\infty}(\Omega)$.
\end{proof}

\section{Some Liouville theorems}\label{sec4}
In this section, we give the proof of Liouville theorems. Before proving our main results, we first explain why we need $\alpha>-2$, see the following lemma.
\begin{lem}\label{lem2.1}
For $\alpha\leq-2$, under the assumption (\ref{1.6}), there is no weak solution for equation
$$-Qu=(F^{0}(x))^{\alpha}e^{u} \indent\mbox{in}\indent\Omega,$$
where $\Omega\subset\mathbb{R}^{N}$ (possibly unbounded) containg $0$.
\end{lem}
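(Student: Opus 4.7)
The plan is to argue by contradiction, reducing via anisotropic spherical averaging to a one-dimensional differential inequality incompatible with the integrability of $(F^{0})^{\alpha}e^{u}$ near the origin when $\alpha\le -2$. Suppose $u$ is a weak solution of $-Qu=(F^{0}(x))^{\alpha}e^{u}$ on $\Omega\ni 0$, and fix a Wulff ball $B_{R}\subset\Omega$. By interior elliptic regularity away from the singular point $0$, one has $u\in C^{2}(B_{R}\setminus\{0\})$ and the equation holds pointwise there; the definition of weak solution also supplies $(F^{0})^{\alpha}e^{u}\in L^{1}_{\mathrm{loc}}(\Omega)$, which is precisely the condition we aim to contradict.

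The first structural input, obtained by a direct computation from assumption (\ref{1.6}) together with properties (3), (5), (6), (7) of Proposition \ref{pro1.1}, is that any ``anisotropic radial'' function $g(F^{0}(x))$ satisfies
$$Q\bigl(g(F^{0}(x))\bigr)=g''(F^{0}(x))+\frac{N-1}{F^{0}(x)}\,g'(F^{0}(x)),$$
formally identical to the isotropic radial Laplacian. I would then define the anisotropic spherical average
$$v(r):=\frac{1}{|\partial B_{r}|_{F}}\int_{\partial B_{r}}u\,d\mathcal{H}^{N-1}_{F},\qquad r\in(0,R),$$
with $d\mathcal{H}^{N-1}_{F}$ chosen so that the Wulff coarea identity $\int_{B_{r}}\phi\,dx=\int_{0}^{r}\int_{\partial B_{s}}\phi\,d\mathcal{H}^{N-1}_{F}\,ds$ holds with $|\partial B_{s}|_{F}=N\kappa_{0}s^{N-1}$. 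Integrating the PDE over a Wulff annulus $\{s<F^{0}(x)<r\}$, applying the divergence theorem with the anisotropic conormal $x/F^{0}$ provided by property (7), and using Jensen's inequality to pass from $\overline{e^{u}}(r)$ to $e^{v(r)}$ yields
$$-v''(r)-\frac{N-1}{r}v'(r)\ge r^{\alpha}e^{v(r)}\qquad\text{on }(0,R).$$

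To close the argument, multiply by $r^{N-1}$ and integrate from $0$ to $r$ to obtain $-r^{N-1}v'(r)\ge\int_{0}^{r}s^{N-1+\alpha}e^{v(s)}\,ds$. For $\alpha\le -N$ the right-hand side is already $+\infty$ for every $r>0$, an immediate contradiction. For $-N<\alpha\le -2$, the coarea formula combined with Jensen translates $(F^{0})^{\alpha}e^{u}\in L^{1}_{\mathrm{loc}}$ into $\int_{0}^{R}s^{N-1+\alpha}e^{v(s)}\,ds<\infty$; integrating the differential inequality twice gives a lower bound $v(r)\to+\infty$ as $r\to 0^{+}$ at a rate which, upon iteration (or equivalently, via the Emden--Fowler substitution $t=\ln r$, $V(t)=v(e^{t})+(\alpha+2)t$, that converts the inequality into the autonomous form $V_{tt}+(N-2)V_{t}\le -e^{V}$, with an additional non-positive constant when $\alpha<-2$), forces $\int_{0}^{R}s^{N-1+\alpha}e^{v(s)}\,ds=+\infty$, the desired contradiction. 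The principal technical obstacle I anticipate is the rigorous justification of the averaging, the divergence theorem across the singular point $0$, and the passage to the limit in Jensen's inequality: this I would handle by cutting off a small Wulff ball around the origin, applying all identities on the regular set $B_{R}\setminus\overline{B_{\varepsilon}}$, and then taking $\varepsilon\to 0$ via dominated convergence, exploiting that $u\in H^{1}_{\mathrm{loc}}$ and $(F^{0})^{\alpha}e^{u}\in L^{1}_{\mathrm{loc}}$ near $0$.
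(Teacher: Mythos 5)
Your proposal follows essentially the same route as the paper: average $u$ over Wulff spheres, use assumption (\ref{1.6}) together with property (7) of Proposition \ref{pro1.1} to identify $\langle\nabla u,x/F^{0}(x)\rangle$ with the anisotropic flux, apply the divergence theorem and Jensen's inequality, and reduce to a radial differential inequality for $v(r)$. The one place where your writeup is weaker than necessary is the endgame for $-N<\alpha\le-2$: the ``integrate twice, establish a blow-up rate for $v$, iterate / Emden--Fowler substitution'' scheme is both more elaborate than needed and not actually carried out (you assert a rate without deriving it). The contradiction is already one step away from the inequality you have: since $v$ is nonincreasing, $\int_{0}^{r}s^{N-1+\alpha}e^{v(s)}\,ds\ge \frac{1}{N+\alpha}r^{N+\alpha}e^{v(r)}$, hence $-v'(r)\ge \frac{1}{N+\alpha}r^{1+\alpha}e^{v(r)}$, i.e. $\bigl(e^{-v}\bigr)'(r)\ge \frac{1}{N+\alpha}r^{1+\alpha}$; integrating from $r_{1}$ to $r$ and letting $r_{1}\to0$ forces $e^{-v(r)}=+\infty$ at a fixed $r$ whenever $\alpha\le-2$, which is the paper's (and the cleanest) conclusion --- no rate for $v$ and no contradiction with $L^{1}_{\mathrm{loc}}$ integrability is needed. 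Your separate treatment of $\alpha\le -N$ is a reasonable precaution (the paper glosses over the implicit requirement $N+\alpha>0$ in its last inequality), and your cutoff-near-the-origin justification of the averaging identities is consistent with what the paper leaves implicit.
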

\begin{proof}
Let $B_{R}(0)\subset\Omega$ and $v(r)=\fint_{\partial B_{r}}udS$ for $r\in(0,R)$, we have
$$v'(r)=\frac{1}{N\kappa_{0}r^{N-1}}\int_{\partial B_{r}}\langle \nabla u,\frac{x}{r}\rangle dS.$$
By the assumption of (\ref{1.6}), we have
$$\langle \nabla u,x\rangle=F(\nabla u)\langle F_{\xi}(\nabla u), F^{0}_{\xi}(x)\rangle F^{0}(x),$$
and $F^{0}(x)=r$, $\nu=F^{0}_{\xi}(x)$ on $\partial B_{r}$. So it follows that
\begin{align}
-v'(r)&=\frac{1}{N\kappa_{0}r^{N-1}}\int_{\partial B_{r}}-\sum_{i=1}^{N}F(\nabla u)F_{\xi_{i}}(\nabla u)\nu_{i}dS=\frac{1}{N\kappa_{0}r^{N-1}}\int_{B_{r}}-Qudx\nonumber\\
&=\frac{1}{N\kappa_{0}r^{N-1}}\int_{B_{r}}(F^{0}(x))^{\alpha}e^{u}dx=\frac{1}{N\kappa_{0}r^{N-1}}\int_{0}^{r}t^{\alpha}\int_{\partial B_{t}}e^{u}dSdt\nonumber\\
&=\frac{1}{r^{N-1}}\int_{0}^{r}t^{N-1+\alpha}\fint_{\partial B_{t}}e^{u}dSdt=\frac{1}{r^{N-1}}\int_{0}^{r}t^{N-1+\alpha}e^{v(t)}dt\nonumber\\
&\geq\frac{1}{N+\alpha}r^{1+\alpha}e^{v(r)},\nonumber
\end{align}
we deduce that
$$e^{-v(r)}\geq C\int_{r_{1}}^{r}t^{1+\alpha}dt\rightarrow\infty,$$
if $r_{1}\rightarrow0$ and $\alpha\leq-2$. This is a contradiction.
\end{proof}

In order to prove our main results, we use the Moser iteration argument to prove the following elliptic estimate, which is inspired by \cite{AY,F,FL,WY} and references therein.
\begin{pro}\label{pro2.1}
Let $u$ be a weak solution of
\begin{align}\label{2.1}
-Qu=(F^{0}(x))^{\alpha}e^{u} \indent\mbox{in}\indent\Omega,
\end{align}
which is stable, that is satisfies
\begin{align}\label{2.2}
\int_{\Omega}F_{\xi_{i}}(\nabla u)F_{\xi_{j}}(\nabla u)\phi_{x_{i}}\phi_{x_{j}}+F(\nabla u)F_{\xi_{i}\xi_{j}}(\nabla u)\phi_{x_{i}}\phi_{x_{j}}-(F^{0}(x))^{\alpha}e^{u}\phi^{2}dx\geq0,
\end{align}
for all $\phi\in C_{c}^{\infty}(\Omega)$, here $N\geq2$ and $\Omega\subset\mathbb{R}^{N}$ (possibly unbounded). Then for any $\beta\in(0,4)$ and integer $m\geq10$, we have
\begin{align}\label{2.3}
\int_{\Omega}(F^{0}(x))^{\alpha}e^{(\beta+1)u}\psi^{2m}dx\leq C\int_{\Omega}(F^{0}(x))^{-\beta\alpha}\left(|\nabla\psi|^{2}+|\nabla\psi|^{4}\right)^{\beta+1}dx ,
\end{align}
here, $\psi\in C_{c}^{1}(\Omega)$ is a test function and satisfying $0\leq\psi\leq1$ in $\Omega$.

\end{pro}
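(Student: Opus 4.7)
The plan is to follow Farina's scheme adapted to the Finsler operator: insert a carefully chosen exponential test function into the stability inequality (\ref{2.2}), use the equation (\ref{2.1}) tested against a related exponential to eliminate the anisotropic gradient energy $\int F^2(\nabla u)e^{\beta u}\psi^{2m}$, and finish with a Young-inequality splitting that produces the target weight $(F^0)^{-\alpha\beta}$ on the right.

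First I would choose $\phi = e^{(\beta/2)u}\psi^m$ in (\ref{2.2}). Two structural identities do most of the work: property (3) of Proposition \ref{pro1.1} gives $F_\xi(\nabla u)\!\cdot\!\nabla u = F(\nabla u)$, which converts the pure gradient part of $F_{\xi_i}F_{\xi_j}\phi_{x_i}\phi_{x_j}$ into $(\beta^2/4)F^2(\nabla u)e^{\beta u}\psi^{2m}$, and property (4) gives $F_{\xi_i\xi_j}(\nabla u)u_{x_j}\equiv 0$, which annihilates every term in $F\,F_{\xi_i\xi_j}(\nabla u)\phi_{x_i}\phi_{x_j}$ carrying a factor of $\nabla u$. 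After the dust settles the stability inequality reads
\begin{align*}
\tfrac{\beta^2}{4}\!\int F^2(\nabla u)e^{\beta u}\psi^{2m} &+ \beta m\!\int e^{\beta u}\psi^{2m-1}F(\nabla u)F_\xi(\nabla u)\!\cdot\!\nabla\psi\\
&+ m^2\!\int e^{\beta u}\psi^{2m-2}\bigl[(F_\xi\!\cdot\!\nabla\psi)^2 + F(\nabla u)F_{\xi_i\xi_j}\psi_{x_i}\psi_{x_j}\bigr] \ge \!\int (F^0)^\alpha e^{(\beta+1)u}\psi^{2m}.
\end{align*}
Testing (\ref{2.1}) against $e^{\beta u}\psi^{2m}$ produces the companion identity
\[
\beta\!\int F^2(\nabla u)e^{\beta u}\psi^{2m} = \int (F^0)^\alpha e^{(\beta+1)u}\psi^{2m} - 2m\!\int e^{\beta u}\psi^{2m-1}F(\nabla u)F_\xi(\nabla u)\!\cdot\!\nabla\psi,
\]
and substituting this back makes $\int (F^0)^\alpha e^{(\beta+1)u}\psi^{2m}$ appear on the left of the stability bound with coefficient $1-\beta/4$, which is strictly positive precisely because $\beta<4$ — this is the quantitative source of the hypothesis on $\beta$.

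Next I would bound the remaining mixed gradient terms. Property (2) yields $|F_\xi|\le C$; splitting $\nabla\psi$ into components parallel and perpendicular to $\nabla u$ and using property (4) to discard the parallel part, the upper ellipticity in (\ref{1.2}) gives $|F_{\xi_i\xi_j}(\nabla u)\psi_{x_i}\psi_{x_j}|\le \Lambda|\nabla\psi|^2$. Two Young splittings $F(\nabla u)|\nabla\psi|\psi^{2m-1}\le \delta\psi^{2m}F^2(\nabla u)+C_\delta\psi^{2m-2}|\nabla\psi|^2$ and $F(\nabla u)|\nabla\psi|^2\psi^{2m-2}\le \delta\psi^{2m}F^2(\nabla u)+C_\delta\psi^{2m-4}|\nabla\psi|^4$, plus one more application of the equation identity above to soak up the residual $F^2(\nabla u)e^{\beta u}\psi^{2m}$, yield
\[
\Bigl(1-\tfrac{\beta}{4}-C\delta\Bigr)\!\int (F^0)^\alpha e^{(\beta+1)u}\psi^{2m} \le C_\delta\!\int e^{\beta u}\bigl(\psi^{2m-2}|\nabla\psi|^2 + \psi^{2m-4}|\nabla\psi|^4\bigr).
\]

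For the last step I apply Young's inequality with conjugate exponents $(\tfrac{\beta+1}{\beta},\beta+1)$ after the factorisation
\[
e^{\beta u}\psi^{2m-k}|\nabla\psi|^k = \bigl[(F^0)^\alpha e^{(\beta+1)u}\psi^{2m}\bigr]^{\beta/(\beta+1)}\cdot \bigl[(F^0)^{-\alpha\beta}\psi^{(2m-k)(\beta+1)-2m\beta}|\nabla\psi|^{k(\beta+1)}\bigr]^{1/(\beta+1)}
\]
for $k\in\{2,4\}$. The residual cutoff exponents are $2m-2\beta-2$ and $2m-4\beta-4$, both non-negative precisely because $m\ge 10$ and $\beta<4$; the bound $0\le\psi\le 1$ then kills them, leaving $(F^0)^{-\alpha\beta}|\nabla\psi|^{2(\beta+1)}$ and $(F^0)^{-\alpha\beta}|\nabla\psi|^{4(\beta+1)}$. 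Absorbing the $\int (F^0)^\alpha e^{(\beta+1)u}\psi^{2m}$ piece back into the left (pick $\delta$ small) and using the elementary $a^{\beta+1}+b^{\beta+1}\le 2(a+b)^{\beta+1}$ delivers (\ref{2.3}). The hard part is not any single estimate but the bookkeeping: property (4) must be invoked at exactly the right moment to cancel the sign-indefinite cross terms in the second-order anisotropic quadratic form, and enough powers of $\psi$ must survive each Young split to keep all cutoff exponents non-negative — these are precisely the quantitative reasons for the hypotheses $\beta<4$ and $m\ge 10$.
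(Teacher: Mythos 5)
Your computation follows the paper's proof line for line in all the essential respects: the same pairing of the stability inequality (with an exponential-times-cutoff test function) against the equation (tested with the squared exponential), the same use of property (4) of Proposition \ref{pro1.1} to kill the cross terms in the Hessian quadratic form, the same absorption producing the factor $1-\beta/4>0$, and the same Young splitting with exponents $\bigl(\tfrac{\beta+1}{\beta},\beta+1\bigr)$ whose residual cutoff exponents $2m-2\beta-2$ and $2m-4\beta-4$ are nonnegative thanks to $m\ge 10$ and $\beta<4$. All of that is correct.

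The one genuine gap is that you insert $\phi=e^{(\beta/2)u}\psi^{m}$ and $e^{\beta u}\psi^{2m}$ directly into (\ref{2.2}) and (\ref{2.1}). For a weak solution one only knows $u\in H^{1}_{loc}$ and $(F^{0})^{\alpha}e^{u}\in L^{1}_{loc}$, so these exponential test functions are in general not admissible (they need not be bounded, and $e^{\beta u}|\nabla u|$ need not be locally square integrable), and the intermediate quantities you manipulate, e.g.\ $\int F^{2}(\nabla u)e^{\beta u}\psi^{2m}$ and $\int (F^{0})^{\alpha}e^{(\beta+1)u}\psi^{2m}$, could a priori be infinite, which invalidates the subtraction and absorption steps. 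The paper handles exactly this point by replacing $e^{\beta t/2}$ and $e^{\beta t}$ with the truncations $a_{k}(t)$ and $b_{k}(t)$ (linear above level $k$), which are globally Lipschitz so that $a_{k}(u)\phi$ and $b_{k}(u)\phi^{2}$ are legitimate test functions with all integrals finite; the key algebraic relation $(a_{k}')^{2}=\tfrac{\beta}{4}b_{k}'$ preserves your $1-\beta/4$ cancellation uniformly in $k$, and Fatou's lemma then recovers the untruncated estimate as $k\to\infty$. Your argument becomes complete once you run it with these truncations; without them the step ``substituting this back'' is not justified.
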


\begin{proof}
For any $\beta\in(0,4)$ and any $k>0$, we set
$$ a_{k}(t)=\left\{
\begin{aligned}
e^{\frac{\beta t}{2}}, &  & \indent\mbox{if}\indent t<k \\
[\frac{\beta}{2}(t-k)+1]e^{\frac{\beta k}{2}}, & & \indent\mbox{if}\indent t\geq k, \\
\end{aligned}
\right.
$$
and
$$ b_{k}(t)=\left\{
\begin{aligned}
e^{\beta t}, &  & \indent\mbox{if}\indent t<k \\
[\beta(t-k)+1]e^{\beta k}, & & \indent\mbox{if}\indent t\geq k. \\
\end{aligned}
\right.
$$
Simple calculations yields
\begin{align}\label{2.4}
a_{k}^{2}(t)\geq b_{k}(t),\indent (a'_{k}(t))^{2}=\frac{\beta}{4}b_{k}'(t) ,
\end{align}
and
\begin{align}\label{2.5}
(a_{k}'(t))^{-2}(a_{k}(t))^{4}\leq c_{1}e^{\beta t},\indent (a_{k}(t))^{2}\leq e^{\beta t}, \indent (b_{k}'(t))^{-1}(b_{k}(t))^{2}\leq c_{2}e^{\beta t},
\end{align}
for some positive constant $c_{1}$ and $c_{2}$ which depends only on $\beta$. For any $\phi\in C_{c}^{1}(\Omega)$, take $b_{k}(u)\phi^{2}$ as the test function, multiply (\ref{2.1}) and integrate by parts, it follows from Proposition \ref{pro1.1} , we have
\begin{align}
&\int_{\Omega}-div(F(\nabla u)F_{\xi}(\nabla u))b_{k}(u)\phi^{2}dx\nonumber\\
&=\int_{\Omega}-\frac{\partial}{\partial x_{i}}(F(\nabla u)F_{\xi_{i}}(\nabla u))b_{k}(u)\phi^{2}dx\nonumber\\
&=\int_{\Omega}F(\nabla u)F_{\xi_{i}}(\nabla u)b_{k}'(u)u_{x_{i}}\phi^{2}+F(\nabla u)F_{\xi_{i}}(\nabla u)b_{k}(u)2\phi\phi_{x_{i}}dx\nonumber\\
&=\int_{\Omega}F^{2}(\nabla u)b_{k}'(u)\phi^{2}+F(\nabla u)F_{\xi_{i}}(\nabla u)b_{k}(u)2\phi\phi_{x_{i}}dx\nonumber\\
&=\int_{\Omega}(F^{0}(x))^{\alpha}e^{u}b_{k}(u)\phi^{2}dx . \nonumber
\end{align}
Since $|F_{\xi}(\nabla u)|\leq C$, it follows that
\begin{align}
\int_{\Omega}F^{2}(\nabla u)b_{k}'(u)\phi^{2}dx \leq 2C\int_{\Omega}F(\nabla u)b_{k}(u)|\phi||\nabla\phi|dx+\int_{\Omega}(F^{0}(x))^{\alpha}e^{u}b_{k}(u)\phi^{2}dx,\nonumber
\end{align}
by the Cauchy inequality, we have
 \begin{align}\label{2.6}
\int_{\Omega}F^{2}(\nabla u)b_{k}'(u)\phi^{2}dx \leq & \frac{2C}{(1-2C\varepsilon)\varepsilon}\int_{\Omega}(b_{k}'(u))^{-1}b_{k}^{2}(u)|\nabla\phi|^{2}dx\nonumber\\
&+\frac{1}{1-2C\varepsilon}\int_{\Omega}(F^{0}(x))^{\alpha}e^{u}b_{k}(u)\phi^{2}dx.
\end{align}

Take $\varphi=a_{k}(u)\phi$ as the test function in (\ref{2.2}), using Proposition \ref{pro1.1} and Cauchy inequality, we have
\begin{align}
&\int_{\Omega}F_{\xi_{i}}(\nabla u)F_{\xi_{j}}(\nabla u)\varphi_{x_{i}}\varphi_{x_{j}}dx\nonumber\\
&=\int_{\Omega}F_{\xi_{i}}(\nabla u)F_{\xi_{j}}(\nabla u)(a_{k}'(u)u_{x_{i}}\phi+a_{k}(u)\phi_{x_{i}})(a_{k}'(u)u_{x_{j}}\phi+a_{k}(u)\phi_{x_{j}})dx\nonumber\\
&\leq\int_{\Omega}(1+2C\varepsilon_{1})F^{2}(\nabla u)(a_{k}'(u))^{2}\phi^{2}+(C^{2}+\frac{2C}{\varepsilon_{1}})(a_{k}(u))^{2}|\nabla\phi|^{2}dx,\nonumber
\end{align}
and it follows from (\ref{1.2}), we have
\begin{align}
&\int_{\Omega}F(\nabla u)F_{\xi_{i}\xi_{j}}(\nabla u)\varphi_{x_{i}}\varphi_{x_{j}}dx\nonumber\\
&=\int_{\Omega}F(\nabla u)F_{\xi_{i}\xi_{j}}(\nabla u)(a_{k}'(u)u_{x_{i}}\phi+a_{k}(u)\phi_{x_{i}})(a_{k}'(u)u_{x_{j}}\phi+a_{k}(u)\phi_{x_{j}})dx\nonumber\\
&=\int_{\Omega}F(\nabla u)F_{\xi_{i}\xi_{j}}(a_{k}(u))^{2}\phi_{x_{i}}\phi_{x_{j}}dx\leq\Lambda\int_{\Omega}F(\nabla u)(a_{k}(u))^{2}|\nabla\phi|^{2}dx\nonumber\\
&\leq\Lambda\varepsilon_{2}\int_{\Omega}F^{2}(\nabla u)(a_{k}'(u))^{2}\phi^{2}dx+\frac{\Lambda}{\varepsilon_{2}}\int_{\Omega}(a_{k}'(u))^{-2}(a_{k}(u))^{4}\frac{|\nabla\phi|^{4}}{\phi^{2}}dx.\nonumber
\end{align}
Hence,
\begin{align}\label{2.7}
\int_{\Omega}(F^{0}(x))^{\alpha}e^{u}(a_{k}(u))^{2}\phi^{2}dx\leq&\int_{\Omega}(1+2C\varepsilon_{1}+\Lambda\varepsilon_{2})F^{2}(\nabla u)(a_{k}'(u))^{2}\phi^{2}dx\nonumber\\
&+(C^{2}+\frac{2C}{\varepsilon_{1}})(a_{k}(u))^{2}|\nabla\phi|^{2}dx\nonumber\\
&+\frac{\Lambda}{\varepsilon_{2}}\int_{\Omega}(a_{k}'(u))^{-2}(a_{k}(u))^{4}\frac{|\nabla\phi|^{4}}{\phi^{2}}dx.
\end{align}
Combine (\ref{2.4}), (\ref{2.5}), (\ref{2.6}) and (\ref{2.7}),  we obtain
\begin{align}
\int_{\Omega}(F^{0}(x))^{\alpha}e^{u}(a_{k}(u))^{2}\phi^{2}dx
&\leq\frac{\beta(1+2C\varepsilon_{1}+\Lambda\varepsilon_{2})}{4(1-2C\varepsilon)}\int_{\Omega}(F^{0}(x))^{\alpha}e^{u}(a_{k}(u))^{2}\phi^{2}dx\nonumber\\
&\indent+\left[\frac{2C\beta(1+2C\varepsilon_{1}+\Lambda\varepsilon_{2})}{4(1-2C\varepsilon)\varepsilon}c_{2}+C^{2}+\frac{2C}{\varepsilon_{1}}\right]\int_{\Omega}e^{\beta u}|\nabla\phi|^{2}dx \nonumber\\
&\indent+\frac{\Lambda}{\varepsilon_{2}}c_{1}\int_{\Omega}e^{\beta u}\frac{|\nabla\phi|^{4}}{\phi^{2}}dx.\nonumber
\end{align}
Since $\beta\in(0,4)$, so we can choose $\varepsilon$, $\varepsilon_{1}$ and $\varepsilon_{2}$ small enough, such that
$$\frac{\beta(1+2C\varepsilon_{1}+\Lambda\varepsilon_{2})}{4(1-2C\varepsilon)}<1 . $$
Hence, we have
$$\int_{\Omega}(F^{0}(x))^{\alpha}e^{u}(a_{k}(u))^{2}\phi^{2}dx\leq C_{1}\int_{\Omega}e^{\beta u}\frac{|\nabla\phi|^{4}}{\phi^{2}}dx+C_{2}\int_{\Omega}e^{\beta u}|\nabla\phi|^{2}dx,$$
for some positive constants $C_{1}$ and $C_{2}$ which are independent of $k$. Then let $k\rightarrow+\infty$, Fatou's lemma tell us that
$$\int_{\Omega}(F^{0}(x))^{\alpha}e^{(\beta+1)u}\phi^{2}dx\leq C_{1}\int_{\Omega}e^{\beta u}\frac{|\nabla\phi|^{4}}{\phi^{2}}dx+C_{2}\int_{\Omega}e^{\beta u}|\nabla\phi|^{2}dx . $$
Let $\phi=\psi^{m}$ and $0\leq\psi\leq1$, by young's inequality, we have
\begin{align}
&\int_{\Omega}(F^{0}(x))^{\alpha}e^{(\beta+1)u}\phi^{2}dx=\int_{\Omega}(F^{0}(x))^{\alpha}e^{(\beta+1)u}\psi^{2m}dx\nonumber\\
&\leq \widetilde{C_{1}}\varepsilon\int_{\Omega}(F^{0}(x))^{\alpha}e^{(\beta+1)u}\psi^{2m}dx+\frac{\widetilde{C_{1}}}{\varepsilon}\int_{\Omega}(F^{0}(x))^{-\beta\alpha}\left(|\psi|^{2m-2-2m\frac{\beta}{\beta+1}}|\nabla\psi|^{2}\right)^{\beta+1}dx\nonumber\\
&\indent+\widetilde{C_{2}}\varepsilon\int_{\Omega}(F^{0}(x))^{\alpha}e^{(\beta+1)u}\psi^{2m}dx+\frac{\widetilde{C_{2}}}{\varepsilon}\int_{\Omega}(F^{0}(x))^{-\beta\alpha}\left(|\psi|^{2m-4-2m\frac{\beta}{\beta+1}}|\nabla\psi|^{4}\right)^{\beta+1}dx . \nonumber
\end{align}
Since $m\geq10$, we have $2m-4-2m\frac{\beta}{\beta+1}\geq0$ and we can choose $\varepsilon$ small such that
$$\int_{\Omega}(F^{0}(x))^{\alpha}e^{(\beta+1)u}\psi^{2m}dx\leq C\int_{\Omega}(F^{0}(x))^{-\beta\alpha}(|\nabla\psi|^{2}+|\nabla\psi|^{4})^{\beta+1}dx . $$
This completes the proof of (\ref{2.3}).
\end{proof}

From the above elliptic estimate, we can start to prove our main theorems.
\begin{proof}[Proof of Theorem \ref{thm1.5}]
By contradiction, suppose $u$ is a stable solution of equation, since $N< 10+4\alpha$, fix $m\geq10$, choose $\beta\in(0,4)$ such that $N-\beta\alpha-2(\beta+1)<0$. Then, for every $x\in\mathbb{R}^{N}$, consider the function $\phi_{R}(x)=\phi\left(\frac{F^{0}(x)}{R}\right)$ where $\phi\in C_{c}^{1}(\mathbb{R})$ satisfies $0\leq\phi\leq1$,
$$ \phi(t)=\left\{
\begin{aligned}
1, &  & \indent\mbox{if}\indent |t|<1 \\
0, & & \indent\mbox{if}\indent |t|\geq 2. \\
\end{aligned}
\right.
$$
we have
$$\int_{B_{R}(0)}(F^{0}(x))^{\alpha}e^{(\beta+1)u}dx\leq CR^{N-\beta\alpha-2(\beta+1)},$$
let $R\rightarrow\infty$, we obtain $\int_{\mathbb{R}^{N}}(F^{0}(x))^{\alpha}e^{(\beta+1)u}dx=0$, a contradiction.
\end{proof}

Next, we will prove the main Theorem \ref{thm1.5} by contradiction. Suppose there exist a compact set $S$, such that $u$ is a weak stable solution of (\ref{1.5}) in $\mathbb{R}^{N}\setminus S$. We can choose $R_{0}>0$ such that $S\subset B_{R_{0}}$. Therefore we can apply Proposition \ref{pro2.1} with $\Omega=\mathbb{R}^{N}\setminus B_{R_{0}}$. In order to derive the contradiction, we need the following estimates.

\begin{lem}
For any $\beta\in(0,4)$ and $r>R_{0}+3$, there exist positive constant $A$ and $B$ independent of $r$, holds

\begin{align}\label{2.8}
\int_{B_{r}\setminus B_{R_{0}+2}}(F^{0}(x))^{\alpha}e^{(\beta+1)u}dx\leq A+Br^{N-2(\beta+1)-\beta\alpha}.
\end{align}
Moreover, for any $B_{2R}(y)\subset\{x\in\mathbb{R}^{N}: F^{0}(x)>R_{0}\}$, we have
\begin{align}\label{2.9}
\int_{B_{2R}(y)}(F^{0}(x))^{\alpha}e^{(\beta+1)u}dx\leq CR^{N-2(\beta+1)-\beta\alpha},
\end{align}
where $C$ is a positive constant independent of $R$ and $y$.
\end{lem}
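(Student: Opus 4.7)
The plan is to derive both estimates by applying Proposition \ref{pro2.1} with well-chosen cutoff functions $\psi\in C_c^1(\mathbb{R}^N\setminus\overline{B_{R_0}})$ inserted into the estimate (\ref{2.3}), with $m\geq 10$ fixed throughout. The construction of such cutoffs is legitimate because $F^0\in C^2(\mathbb{R}^N\setminus\{0\})$ with $|\nabla F^0|\leq C$, as follows from Proposition \ref{pro1.1} applied to $F^0$, so that smooth Wulff-radial profiles are admissible.

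For (\ref{2.8}), I take $\psi=\psi_1\psi_2$ as a product of two Wulff-radial cutoffs: $\psi_1(x)=\eta_1(F^0(x))$ vanishes on $\overline{B_{R_0+1}}$ and equals $1$ outside $B_{R_0+2}$, while $\psi_2(x)=\eta_2(F^0(x)/r)$ equals $1$ on $B_r$ and is supported in $B_{2r}$, where $\eta_1,\eta_2\in C^1(\mathbb{R};[0,1])$ are standard one-dimensional cutoffs. Since $\psi\equiv 1$ on $B_r\setminus B_{R_0+2}$, the left-hand side of (\ref{2.3}) dominates the integral in (\ref{2.8}). The support of $\nabla\psi$ splits into two annular pieces: the inner annulus $\{R_0+1<F^0<R_0+2\}$, where $|\nabla\psi|$ and $F^0$ are all bounded and the Lebesgue measure is a constant depending only on $R_0$, contributes the constant term $A$; the outer annulus $\{r<F^0<2r\}$, where $|\nabla\psi|\leq C/r$ and $c_1r\leq F^0\leq c_2r$, contributes a term of order $r^N\cdot r^{-\beta\alpha}\cdot r^{-2(\beta+1)}=r^{N-2(\beta+1)-\beta\alpha}$ after distinguishing the sign of $\alpha$ in the weight $(F^0)^{-\beta\alpha}$. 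Adding the two contributions gives (\ref{2.8}).

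For (\ref{2.9}), I take the translated cutoff $\psi(x)=\eta(F^0(x-y)/R)$ with $\eta\in C_c^1([-2,2])$ equal to $1$ on $[-1,1]$, so that $\psi\equiv 1$ on $B_R(y)$, $\psi$ is supported in $B_{2R}(y)$, and $|\nabla\psi|\leq C/R$. The hypothesis $B_{2R}(y)\subset\{F^0>R_0\}$, combined with the triangle-type inequality $|F^0(x)-F^0(y)|\leq F^0(x-y)<2R$ on $B_{2R}(y)$ (using that $F^0$ is even by homogeneity and subadditive by convexity), forces $F^0(y)\geq R_0+2R\geq 2R$; consequently $R\leq F^0(y)/2\leq F^0(x)\leq 2F^0(y)$ on $B_{2R}(y)$, so $F^0$ is comparable to $F^0(y)$ uniformly on the ball. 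The lower bound $F^0\geq R$ handles the weight $(F^0)^{-\beta\alpha}$ when $\alpha\geq 0$, giving $(F^0)^{-\beta\alpha}\leq CR^{-\beta\alpha}$; for $\alpha<0$ one exploits the two-sided comparison to recover the same scaling. Plugging this bound into (\ref{2.3}) along with $|\nabla\psi|^2+|\nabla\psi|^4\leq CR^{-2}$ and the volume bound $|B_{2R}(y)|=\kappa_0(2R)^N$ yields (\ref{2.9}).

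The main technical point is the careful handling of the anisotropic weight $(F^0)^{-\beta\alpha}$ on the support of $\nabla\psi$: when $\alpha\geq 0$ a lower bound on $F^0$ suffices, whereas when $\alpha<0$ one must combine both directions of the Wulff-distance comparison, using crucially that the support of $\nabla\psi$ is kept away from the origin by either the inner cutoff $\psi_1$ in (\ref{2.8}) or the hypothesis $B_{2R}(y)\subset\{F^0>R_0\}$ in (\ref{2.9}). All remaining steps are straightforward bookkeeping of the integrand in Proposition \ref{pro2.1} together with elementary Wulff-ball volume computations.
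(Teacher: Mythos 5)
Your approach is the same as the paper's: apply Proposition \ref{pro2.1} with a cutoff glued from an inner piece (radial in $F^{0}(x)$, killing a neighbourhood of $B_{R_{0}}$) and an outer piece $\phi(F^{0}(x)/r)$ for (\ref{2.8}), and with the translated cutoff $\phi(F^{0}(x-y)/R)$ for (\ref{2.9}); your product $\psi_{1}\psi_{2}$ versus the paper's piecewise definition is immaterial. The treatment of (\ref{2.8}) is correct: the inner annulus $\{R_{0}+1<F^{0}<R_{0}+2\}$ gives the constant $A$ and the outer annulus $\{r<F^{0}<2r\}$, on which $F^{0}\sim r$ from both sides, gives $Br^{N-2(\beta+1)-\beta\alpha}$.

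There is, however, a concrete error in your argument for (\ref{2.9}). From $B_{2R}(y)\subset\{F^{0}>R_{0}\}$ you correctly deduce $F^{0}(y)\geq R_{0}+2R$, but the next step, ``consequently $R\leq F^{0}(y)/2\leq F^{0}(x)$ on $B_{2R}(y)$,'' does not follow: the triangle inequality only gives $F^{0}(x)\geq F^{0}(y)-F^{0}(x-y)>F^{0}(y)-2R\geq R_{0}$, and when $F^{0}(y)=2R+R_{0}$ with $R\gg R_{0}$ there are points of $B_{2R}(y)$ (indeed of the annulus carrying $\nabla\psi$) where $F^{0}(x)$ is of order $R_{0}$, not of order $R$. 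Consequently the two\-/sided comparison $F^{0}(x)\sim R$ on $\operatorname{supp}\nabla\psi$ fails under the stated hypothesis, and with it the bound $(F^{0}(x))^{-\beta\alpha}\leq CR^{-\beta\alpha}$ that you need for \emph{either} sign of $\alpha$ (for $\alpha>0$ the worst case is $F^{0}(x)\sim R_{0}$, for $\alpha<0$ it is $F^{0}(x)\sim F^{0}(y)\gg R$). The comparison does hold, and your computation goes through verbatim, once one assumes $F^{0}(y)\geq cR$ for some $c\ge 4$, which is exactly the situation in which the lemma is invoked later ($R=F^{0}(y)/4$); the paper's own one-line proof of (\ref{2.9}) silently relies on this as well. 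You should either add this hypothesis or restrict the claim to balls with $F^{0}(y)\geq 4R$; as written, your chain of inequalities asserts something false.
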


\begin{proof}
Fix $m=10$, and for every $r>R_{0}+3$, we consider the following test function $\xi_{r}\in C_{c}^{1}(\mathbb{R}^{N})$
$$ \xi_{r}(x)=\left\{
\begin{aligned}
\theta_{R_{0}}(F^{0}(x)), &  & \indent\mbox{if}\indent x\in B_{R_{0}+3} \\
\phi\left(\frac{F^{0}(x)}{r}\right), &  &\indent\mbox{if}\indent x\in \mathbb{R}^{N}\setminus B_{R_{0}+3}, \\
\end{aligned}
\right.
$$
where $\phi$ is defined in the Proof of Theorem \ref{thm1.1} and for $s>0$, $\theta_{s}$ satisfying $\theta_{s}\in C_{c}^{1}(\mathbb{R})$, $0\leq\theta_{s}\leq1$ everywhere on $\mathbb{R}$ and
$$ \theta_{s}(t)=\left\{
\begin{aligned}
0, &  & \indent\mbox{if}\indent |t|\leq s+1 \\
1, &  &\indent\mbox{if}\indent |t|\geq s+2. \\
\end{aligned}
\right.
$$
It follows from Proposition \ref{pro2.1} that
\begin{align}
\int_{B_{r}\setminus B_{R_{0}+2}}(F^{0}(x))^{\alpha}e^{(\beta+1)u}dx&\leq\int_{\Omega}(F^{0}(x))^{\alpha}e^{(\beta+1)u}dx\nonumber\\
&\leq C\int_{\Omega}(F^{0}(x))^{-\beta\alpha}\left(|\nabla\xi_{r}|^{2}+|\nabla\xi_{r}|^{4}\right)^{\beta+1}dx\nonumber\\
&\leq C_{1}(\alpha,N,\theta_{R_{0}})+C_{2}(\alpha,N,\phi)r^{N-2(\beta+1)-\beta\alpha},\nonumber
\end{align}
hence the inequality (\ref{2.8}) holds.

The integral estimate (\ref{2.9}) is obtained in the same way by using the test functions $\psi_{R,y}(x)=\phi(\frac{F^{0}(x-y)}{R})$ in Proposition \ref{pro2.1}.
\end{proof}

\begin{lem}
$$\lim_{F^{0}(x)\rightarrow\infty}(F^{0}(x))^{2+\alpha}e^{u(x)}=0.$$
\end{lem}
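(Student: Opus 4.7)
My plan is a blow-up/compactness argument that reduces the claimed pointwise decay to the stable-solution Liouville theorem already established as Theorem~\ref{thm1.5}.

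First, I would fix an exponent $\beta\in(0,4)$ satisfying both $\beta>(N-2)/(2+\alpha)$ and $\beta+1>N/2$. Such a $\beta$ exists because the hypothesis $3\leq N<10+4\alpha^{-}$ forces $\max\{(N-2)/2,(N-2)/(2+\alpha)\}<4$ (and both thresholds are positive since $N\geq 3$). The first inequality makes the exponent on $r$ in \eqref{2.8} strictly negative, so $(F^{0})^{\alpha}e^{(\beta+1)u}$ is integrable on $\{F^{0}>R_{0}+2\}$ and in particular $\int_{\{F^{0}>r\}}(F^{0})^{\alpha}e^{(\beta+1)u}dx\to 0$ as $r\to\infty$. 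The second inequality is the one needed to apply Lemma~\ref{+lem2.1} with data in $L^{\beta+1}$.

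Next I argue by contradiction. Suppose there exist $\delta>0$ and a sequence $x_{n}$ with $F^{0}(x_{n})\to\infty$ and $(F^{0}(x_{n}))^{2+\alpha}e^{u(x_{n})}\geq\delta$. Set $R_{n}=F^{0}(x_{n})/4$ and $z_{n}=x_{n}/F^{0}(x_{n})$; after extracting a subsequence, $z_{n}\to z_{\infty}$ on the unit Wulff sphere. Rescale
\[
v_{n}(y):=u(x_{n}+R_{n}y)+(2+\alpha)\log R_{n}.
\]
Using the $1$-homogeneity of $F,F^{0}$ and the identity $x_{n}+R_{n}y=R_{n}(4z_{n}+y)$, a direct computation shows that $v_{n}$ satisfies
\[
-Qv_{n}=h_{n}(y)e^{v_{n}},\qquad h_{n}(y)=F^{0}(4z_{n}+y)^{\alpha},
\]
with $h_{n}$ uniformly bounded above and below on any fixed ball; the same change of variable transforms the stability of $u$ outside the compact set $S$ (choose $R_{0}$ with $S\subset B_{R_{0}}$) into the stability of $v_{n}$ on the rescaled domain $\Omega_{n}=\{y:x_{n}+R_{n}y\in\mathbb{R}^{N}\setminus S\}$, which eventually contains every fixed ball. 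By construction $v_{n}(0)=\log\!\bigl(4^{-(2+\alpha)}(F^{0}(x_{n}))^{2+\alpha}e^{u(x_{n})}\bigr)\geq -C_{0}$.

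Applying \eqref{2.9} on $B_{2R_{n}}(x_{n})$ and unpacking the change of variable yields $\int_{B_{2}}h_{n}e^{(\beta+1)v_{n}}dy\leq C$ uniformly in $n$. Since $\beta+1>N/2$, an interior version of Lemma~\ref{+lem2.1} then produces uniform bounds $\|v_{n}\|_{L^{\infty}(B_{k})}\leq C_{k}$ for each fixed $k$. Standard quasilinear $C^{1,\alpha}$ interior regularity for $Q$ gives, along a subsequence, $v_{n}\to v_{\infty}$ in $C^{1}_{\mathrm{loc}}(\mathbb{R}^{N})$, and $h_{n}\to F^{0}(4z_{\infty}+y)^{\alpha}$ uniformly on compact sets. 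Passing to the limit in the equation and in the stability quadratic form shows that $v_{\infty}$ is a weak stable solution of $-Qv_{\infty}=F^{0}(4z_{\infty}+y)^{\alpha}e^{v_{\infty}}$ on $\mathbb{R}^{N}$; the translation $\tilde v(z):=v_{\infty}(z-4z_{\infty})$ is then a weak stable solution of \eqref{1.5} on $\mathbb{R}^{N}$, nontrivial because $\tilde v(4z_{\infty})\geq -C_{0}$. Since $N<10+4\alpha^{-}\leq 10+4\alpha$, this contradicts Theorem~\ref{thm1.5}, and the lemma follows.

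The main obstacle is the passage to the limit in the compactness step: because $Q$ degenerates where $\nabla v=0$, obtaining $C^{1}_{\mathrm{loc}}$ compactness and, more delicately, passing both the equation and the stability form (which involves the $F_{\xi_{i}\xi_{j}}$ terms, only continuous off the origin) to the limit requires care. The standard remedy is to work on the open set where $\nabla v_{\infty}$ does not vanish and extend by approximation, or to quote a refined interior regularity statement tailored to the Finsler Laplacian; verifying that this passage does not destroy the stability inequality is the crux of the argument.
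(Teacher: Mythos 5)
Your strategy (blow-up along a sequence violating the decay, then invoking the stable-solution Liouville theorem) is genuinely different from the paper's, which is much more direct: the paper sets $\lambda=\frac{N+\alpha}{2(2+\alpha)}$ and $w=e^{\lambda u}$, observes that $w$ is a subsolution of the linear equation $-Qw=\lambda(F^{0}(x))^{\alpha}e^{u}w$, controls the potential in $L^{N/(2-\varepsilon_{0})}(B_{2R}(y))$ and the critical integral $\int(F^{0})^{\alpha}w^{2}$ via \eqref{2.8}--\eqref{2.9} with the critical exponent $\beta_{2}=\frac{N-2}{2+\alpha}$, and then applies Serrin's local boundedness estimate to obtain $w(y)=o(R^{-(N+\alpha)/2})$ directly, with no compactness argument at all. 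Your route could in principle work, but as written it has genuine gaps.

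Two of your intermediate claims are false as stated. The weight $h_{n}(y)=F^{0}(4z_{n}+y)^{\alpha}$ is \emph{not} uniformly bounded above and below on every fixed ball: it degenerates (or blows up, for $\alpha<0$) at $y=-4z_{n}$, which lies inside $B_{k}(0)$ for $k\geq4$ --- exactly where you need locally uniform estimates in order to produce a limit solving the weighted equation on all of $\mathbb{R}^{N}$. Likewise $\Omega_{n}$ does \emph{not} eventually contain every fixed ball: it always excludes a Wulff ball of radius comparable to $R_{0}/R_{n}$ about $-4z_{n}$, so the limit is a priori stable only on $\mathbb{R}^{N}\setminus\{-4z_{\infty}\}$, and a capacity/removable-singularity argument is needed to restore stability across that point before Theorem \ref{thm1.5} can be applied. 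More seriously, the entire weight of the proof rests on the compactness step you defer: uniform local upper bounds on $v_{n}$, exclusion of $v_{n}\to-\infty$ (the bound $\int_{B_{2}}h_{n}e^{(\beta+1)v_{n}}\,dy\leq C$ together with $v_{n}(0)\geq-C_{0}$ does not by itself yield a locally uniform lower bound), $C^{1}_{\mathrm{loc}}$ convergence for the degenerate operator $Q$, and passage of the stability form (which involves $F_{\xi_{i}\xi_{j}}(\nabla v_{n})$, defined only off $\{\nabla v_{n}=0\}$) to the limit. None of these tools are supplied by the paper --- Lemma \ref{+lem2.1} is a global Dirichlet statement, not an interior estimate --- and you yourself flag this step as the crux. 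Until it is carried out, the proof is incomplete; the paper's subsolution-plus-Harnack argument is precisely the device that avoids it.
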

\begin{proof}
Consider $g(\beta)=N-2(\beta+1)-\beta\alpha^{-}$, since $3\leq N<10+4\alpha^{-}$, we have $g(0)>0$ and $g(4)<0$, thus, there exist $\beta_{1}\in(0,4)$ and $\varepsilon_{0}\in(0,2)$ such that
$$\beta_{1}+1\geq\beta_{1}+1+\frac{\beta_{1}}{2}\alpha^{-}>\theta:=\frac{N}{2-\varepsilon_{0}}>\frac{N}{2}.$$
Let $F^{0}(y)>4R_{0}$, $R=\frac{F^{0}(y)}{4}$, we have $B_{2R}(y)\subset\mathbb{R}^{N}\setminus\overline{B_{R_{0}}}$. By H\"{o}lder inequality
\begin{align}
\int_{B_{R}(y)}\left((F^{0}(x))^{\alpha}e^{u}\right)^{\theta}dx&\leq\left(\int_{B_{R}(y)}(F^{0}(x))^{\alpha}e^{(\beta_{1}+1)u}dx\right)^{\frac{\theta}{\beta_{1}+1}}\left(\int_{B_{R}(y)}(F^{0}(x))^{\frac{\beta_{1}\alpha\theta}{\beta_{1}+1-\theta}}\right)^{\frac{\beta_{1}+1-\theta}{\beta_{1}+1}}\nonumber\\
&\leq C\left(R^{N-2(\beta_{1}+1)-\beta_{1}\alpha}\right)^{\frac{\theta}{\beta_{1}+1}}\left(R^{N+\frac{\beta_{1}\alpha\theta}{\beta_{1}+1-\theta}}\right)^{\frac{\beta_{1}+1-\theta}{\beta_{1}+1}}\nonumber\\
&=CR^{N-2\theta}.\nonumber
\end{align}

Let $\beta_{2}=\frac{N-2}{2+\alpha}$ and $\lambda=\frac{N+\alpha}{2(2+\alpha)}$, then $\beta_{2}\in(0,4)$. Since $3\leq N<10+4\alpha^{-}$ and $N-2(\beta_{2}+1)-\beta_{2}\alpha=0$, let $w=e^{\lambda u}$, from (\ref{2.8}), take $\beta=\beta_{2}$ and let $r\rightarrow\infty$, we have
$$\int_{\mathbb{R}^{N}\setminus B_{R_{0}+2}}(F^{0}(x))^{\alpha}e^{(\beta+1)u}dx=\int_{\mathbb{R}^{N}\setminus B_{R_{0}+2}}(F^{0}(x))^{\alpha}w^{2}dx<\infty.$$

Since $-Qw-\lambda(F^{0}(x))^{\alpha}e^{u}w=-\lambda^{2}e^{\lambda u}F^{2}(\nabla u)\leq0$, by the Harnack inequality of Serrin \cite{S}, we have, for any $q>1$
$$\parallel w\parallel_{L^{\infty}(B_{R}(y))}\leq CR^{-\frac{N}{q}}\parallel w\parallel_{L^{q}(B_{2R}(y))},$$
where $C$ is a positive constant dependent on $N$ and $R^{\varepsilon_{0}}\parallel \lambda(F^{0}(x))^{\alpha}e^{u}\parallel_{L^{\frac{N}{2-\varepsilon_{0}}}(B_{2R}(y))}$. Let $\theta=\frac{N}{2-\varepsilon_{0}}$, we have
$$R^{\varepsilon_{0}}\parallel\lambda(F^{0}(x))^{\alpha}e^{u}\parallel_{L^{\theta}(B_{2R}(y))}\leq C\lambda R^{\varepsilon_{0}}(R^{N-2\theta})^{\frac{1}{\theta}}=C\lambda.$$
Let $q=2$, we have
$$w(y)\leq CR^{-\frac{N}{2}}\parallel w\parallel_{L^{2}}\leq CR^{-\frac{N}{2}}R^{\frac{-\alpha}{2}}\parallel (F^{0}(x))^{\frac{\alpha}{2}}w\parallel_{L^{2}}=o(R^{-\frac{N+\alpha}{2}}),$$
since $e^{u(y)}=w(y)^{\frac{1}{\lambda}}$, it follows that
$$\lim_{F^{0}(x)\rightarrow\infty}(F^{0}(x))^{2+\alpha}e^{u(x)}=0.$$
\end{proof}

Now, it's suffice to finish the proof of Theorem \ref{thm1.6}.
\begin{proof}[Proof of Theorem \ref{thm1.6}]
Let $v(r)=\frac{1}{N\kappa_{0}r^{N-1}}\int_{\partial B_{r}}udS$, similar with Lemma \ref{lem2.1}, we deduce
\begin{align}
v'(r)=\frac{1}{N\kappa_{0}r^{N-1}}\int_{\partial B_{r}}\sum_{i=1}^{N}F(\nabla u)F_{\xi_{i}}(\nabla u)\nu_{i} dS=\frac{1}{N\kappa_{0}r^{N-1}}\int_{B_{r}}Qudx . \nonumber
\end{align}
Since $\alpha>-2$, so there exist $\delta>0$ such that $2+\alpha-\delta>0$. It follows from that $\lim_{F^{0}(x)\rightarrow\infty}(F^{0}(x))^{2+\alpha}e^{u(x)}=0$, then, for $r$ large enough, we have
$$-v'(r)=\frac{1}{N\kappa_{0}r^{N-1}}\int_{B_{r}}-Qudx=\frac{1}{N\kappa_{0}r^{N-1}}\int_{B_{r}}(F^{0}(x))^{\alpha}e^{u}dx\leq\frac{\delta}{r},$$
to go further, we have
$$r^{2+\alpha}e^{v(r)}\geq Cr^{2+\alpha-\delta},$$
where $C$ is independent of $r$. By Jensen's inequality, we have

$$\max_{\partial B_{r}}(F^{0}(x)^{2+\alpha}e^{u(x)})=r^{2+\alpha}\max_{\partial B_{r}}e^{u(x)}\geq \frac{r^{2+\alpha}}{N\kappa_{0}r^{N-1}}\int_{\partial B_{r}}e^{u}dS\geq r^{2+\alpha}e^{v(r)}\geq Cr^{2+\alpha-\delta},$$
let $r\rightarrow\infty$, we get the contradiction.
\end{proof}

\end{document}